\newtheorem*{H-L}{Theorem Henry--L{\^e}}
\newtheorem*{O-W}{Theorem O'Shea--Wilson}
\newtheorem {theorem}{Theorem}[section]
\newtheorem {corollary}[theorem]{Corollary}
\newtheorem {lemma}[theorem]{Lemma}
\newtheorem {example}[theorem]{Example}
\newtheorem {definition}[theorem]{Definition}
\newtheorem {remark}[theorem]{Remark}
\newcommand{\dist}{{\rm dist}}
\newcommand{\proj}{{\rm pr}}
\newcommand{\rank}{{\rm rank}}
\newcommand{\R}{\mathbb R}
\newcommand{\B}{\mathbb B}
\newcommand{\C}{\mathbb C}
\newcommand{\N}{\mathbb N}
\def\ees{{\accent"5E e}\kern-.385em\raise.2ex\hbox{\char'23}\kern-.08em}
\def\EES{{\accent"5E e}\kern-.5em\raise.8ex\hbox{\char'23 }}
\def\ow{o\kern-.42em\raise.82ex\hbox{\vrule width .12em height .0ex depth .075ex \kern-0.16em \char'56}\kern-.07em}
\def\OW{o\kern-.460em\raise1.36ex\hbox{
\vrule width .13em height .0ex depth .075ex \kern-0.16em
\char'56}\kern-.07em}
\def\DD{D\kern-.7em\raise0.4ex\hbox{\char '55}\kern.33em}
\title{Limits of tangent spaces to definable sets}
\author{S\~i Ti\d{\^e}p \DD inh$^\dagger$}
\address{Institute of Mathematics, VAST, 18 Hoang Quoc Viet Road, Cau Giay District 10307, Hanoi, Vietnam and Institute of Mathematics, Polish Academy of Sciences, \'Sniadeckich 8, 00-656 Warsaw, Poland}
\email{dstiep@math.ac.vn}
\author{Olivier Le Gal}
\address{Universit{\' e} de Savoie Mont Blanc, Laboratoire de Math{\' e}matiques, B{\^ a}timent Chablais, Campus Scientifique, 73376 Le Bourget-du-Lac Cedex, France}
\email{Olivier.Le-Gal@univ-savoie.fr, Olivier.Le-Gal@univ-smb.fr }
\author{Ti\'{\^e}n-S\OW n Ph\d{a}m$^\ddagger$}
\address{Department of Mathematics, Dalat University, 1 Phu Dong Thien Vuong, Dalat, Vietnam}
\email{sonpt@dlu.edu.vn}
\subjclass{Primary 14P10; Secondary 14P15, 14P20, 32C05, 32C40, 32C42, 32C45, 58A07, 58A35}
\keywords{Nash fiber; limit of tangent spaces; tangent cone; exceptional ray; o-minimal structure; definable set}
\date{\today}
\begin{document}

\begin{abstract} We study the set of tangent limits at a given point to a set definable in any o-minimal structure by characterizing the set of ``exceptional rays'' in the tangent cone to the set at that point and investigating the set of tangent limits along these rays. Several criteria for determining ``exceptional rays'' will be given. 
The main results of the paper generalize, to the o-minimal setting and to arbitrary dimension, the main results of~\cite{OShea2004} which deals with algebraic surfaces in $\mathbb R^3$. 
\end{abstract}

\maketitle

\pagestyle{plain}

\section{Introduction}
One of the ways to study singular varieties is to investigate their tangent cones and limits of tangent spaces, which was initialized by Whitney in the 1960s~\cite{Whitney1965-1,Whitney1965-2}. 
Given a variety $X\subset \mathbb K^n$ of (pure) dimension $d$ ($\mathbb K=\mathbb R \text{ or }\mathbb C$, varieties nature will be specified), one constructs the {\em Nash blow-up} $\mathcal N(X)$ of $X$, made of the closure of the tangent bundle of its regular part $X_{reg}$ (we say that $x\in X$ is regular if $X$ is $C^1$ at $x$) and studies the {\em Nash fiber} $\mathcal N(X)_x$ of the bundle $\mathcal N(X)$ over $x\in X$. Namely, if $\mathbb G(d,n)$ denotes the Grassmannian of the $d$-dimensional linear subspaces of $\mathbb K^n$ and $T_xX$ is the space tangent to $X$ at $x$, then
$$\mathcal N(X) = \overline{\{(x,P)\in X_{reg} \times \mathbb G(d,n):\ P = T_x X_{reg}\}},$$
so $(x,P)$ belongs to $\mathcal N(X)$ if there exists a sequence $x^k\in X_{reg}$ approaching $x$ with $T_{x^k}X\to P$. If $x$ is a regular point, the Nash fiber $\mathcal N(X)_x$ is reduced to the tangent space to $X$ at $x$, but for singular $x$, this Nash fiber contains all limits at $x$ of the spaces tangent to $X_{reg}$, then carries informations on the singularity germ. 

Nash fibers are better analyzed together with an additional data which keeps track of the direction along which limits are taken. For this, set
$$\mathcal N'(X) = \overline{ \{ (x,t,v,P)\in X_{reg}\times\mathbb K_+ \times\mathbb K^n \times \mathbb G(d,n):\ x+tv \in X_{reg},\ P= T_{x+tv} X_{reg}\} },$$
where $\mathbb K_+=\mathbb R_+:=(0,+\infty)$ if $\mathbb K=\mathbb R$ and $\mathbb K_+=\mathbb C\setminus\{0\}$ if $\mathbb K=\mathbb C.$ 
The fiber $\mathcal N'(X)_{(x,0)}$ of $\mathcal N'(X)$ for $x\in X$ and $t=0$ gives again the Nash fiber $\mathcal N(X)_x$ when projected on the $P$ coordinate, while its projection on the $v$ coordinate is the tangent (semi)cone to $X$ at $x$. 
Most importantly, $\mathcal N'(X)_{(x,0)}$ connects a plane in the Nash fiber to the direction of the tangent (semi)cone it comes from. 
Indeed, $(v,P)\in\mathcal N'(X)_{(x,0)}$ if there exists a sequence $x^k\in X$ approaching $x$ with simultaneously $T_{x^k}X\to P$ and $t_k(x^k-x)\to v$ for some $t_k\in\mathbb K_+$. 
For fixed $x$ and for each $v\in \mathbb K^n\setminus\{0\}$, the fiber of $\mathcal N'(X)$ over $(x,0,u)$ does not depend on the point $u$ in the ray $\ell:=\mathbb R_+ v$ if $\mathbb K=\mathbb R$ or the line $\ell:=\mathbb C v$ if $\mathbb K=\mathbb C$, and is denoted by $\mathcal N_{\ell}$ if $X$ and $x$ are clear from the context. We call $\mathcal N_\ell$, the {\em Nash fiber (over $x$) along $\ell$} or {\em the set of tangent limits to $X$ (at $x$) along $\ell.$} Studying a Nash fiber with respect to the tangent cone consists of  describing $\mathcal N_{\ell}$ with respect to $\ell$.

For complex varieties, these studies were carried out by Henry, L\^e and Teissier~\cite{Henry1975,Le1981,Le1988}. The case of complex algebraic surfaces is well described by~\cite{Henry1975} for isolated singularities and by~\cite{Le1981} in general, and the results can be stated as follows. 

\begin{H-L}[\cite{Henry1975,Le1981}]
Let $X\subset \mathbb C^3$ be a complex algebraic surface containing the origin $0\in \C^3$. Denote by $\mathcal C$ the tangent cone to $X$ at $0$. Then there exists a cone $\mathcal E\subset \mathcal C$ consisting of finitely many lines $\ell_1,\dots,\ell_r$ (called exceptional lines) such that:
\begin{itemize}
\item For $\ell\subset\mathcal C$ such that $\ell\not\subset\mathcal E$, the set $\mathcal N_{\ell}$ is reduced to one plane, which is the (common) tangent plane to $\mathcal C$ at a non zero point of $\ell$.
\item For $\ell\subset\mathcal E$, we have $\mathcal N_{\ell}=\{P\in \mathbb G(2,3):\ \ell\subset P\}$, i.e., $\mathcal N_{\ell}$ contains the whole pencil of planes containing $\ell$.
\item Singular lines of $\mathcal C$ are exceptional, i.e., $\mathcal C_{sing}\subset \mathcal E$.
\end{itemize}
\end{H-L}

In the real setting, to the best of our knowledge, only algebraic surfaces in $\mathbb R^3$ have already been considered elaborately, by O'Shea and Wilson in \cite{OShea2004}. Compared to the complex case, the structure of the set of tangent limits at a singular point of a real algebraic surface is more flexible, and characterizations of exceptional rays that coincide for complex varieties become inequivalent. In \cite{OShea2004}, a ray $\ell$ is said to be {\em exceptional} if $\mathcal N_{\ell}$ has positive dimension.  
The main results in~\cite{OShea2004} can be summarized as follows.

\begin{O-W}[\cite{OShea2004}]
Let $X\subset\mathbb R^3$ be a real algebraic surface containing the origin $0\in \R^3$. Let $\mathcal C$ be the tangent semicone to $X$ at $0$, and $\mathcal C'$ be the tangent semicone to the singular part $X_{sing}$ of $X$ at $0$. 
Then there exists a semicone $\mathcal E\subset \mathcal C$ not containing rays in $\mathcal C'$ and consisting of finitely many  rays $\ell_1,\dots,\ell_r$ (called exceptional rays),  such that:
\begin{itemize}
\item If $\ell\subset\mathcal C\setminus(\mathcal E\cup\mathcal C')$, then $\mathcal N_{\ell}$ is reduced to one plane, which is the (common) tangent plane to $\mathcal C$ at a non zero point of $\ell$.
\item If $\ell\subset \mathcal E$, then $\mathcal N_{\ell}$ is closed, connected and has dimension $1.$
\item If $\ell\subset \mathcal C_{sing}\setminus \mathcal C'$ is a singular ray of $\mathcal C$, then $\ell$ is exceptional, except, possibly, if the tangent semicone to $\mathcal C$ at non zero points of $\ell$ is a plane. In particular, if $\ell$ is an isolated ray or a boundary ray in $\mathcal C$, then $\ell$ is exceptional.
\end{itemize}
\end{O-W}

The authors of~\cite{OShea2004} remark that their results should have generalizations to higher dimension and/or codimension, but, as they fairly recognized, their methods are merely specific to algebraic surfaces in $\mathbb R^3$. This article aims to make this extension for arbitrary dimension and codimension. It happens moreover that the proofs can be made very general: we show that our results hold in the setting of an arbitrary o-minimal structure; in particular, the given description of Nash fibers is indeed not of an algebraic nature, but steams from the tame topology of the considered sets.

For the remainder of the paper, abusing of terminology, a ``semicone'' is called briefly a ``cone'' for short. 
If not mentioned otherwise, the term ``ray'' means ``open ray emanating from the origin $0\in\R^n$'', i.e., we consider only ``rays'' with the endpoint $0$ but $0$ is not included. 

Given an o-minimal expansion $\mathcal R$ of the field of real numbers, we call a set {\em definable} if it is definable in $\mathcal R$ with real parameters. We include in Section \ref{OM} a short introduction to o-minimality and refer to~\cite{Coste2000} or \cite{Dries1996} for further general references. Let $X\subset\R^n$ be a definable set with $x\in \overline X$.  The {\em geometric tangent semicone}, which we will call ``{\em tangent cone}'' for short from now on, of $X$ at $x$ is defined by
$$C_x X:=\left\{
\begin{array}{lll}
v\in\mathbb R^n : & \text{there are sequences } x^k\in X \text{ and } t_k\in(0,+\infty) \text{ such that}\\ 
&x^k\to x \text{ and } t_k (x^k-x)\to v \text{ as } k\to +\infty
\end{array}\right\}.$$
With no loss of generality, suppose that $x=0$ for the remainder of the paper. 
For short we set 
$$\mathcal C:=C_0 X \text{ and }\ \mathcal C':=C_0 (\overline{X})_{sing},$$  
where $(\overline{X})_{sing}$ is the singular part of the closure $\overline X$ of $X$. 
As mentioned previously, different subsets of $\mathcal C\setminus \mathcal C'$ might be considered as {\em exceptional} (like in Theorem O'Shea--Wilson, we exclude rays in $\mathcal C'$, both because such rays are certainly not ordinary, and because Nash fibers along these rays seem to be wilder as they contain the degeneracy at $0$ of singular Nash fibers at $x\neq 0$); namely: 
\begin{enumerate}
\item [(a)] non singular rays $\ell$ in $\mathcal C$ whose Nash fiber is not the tangent space $T_{v}\mathcal C$ to the tangent cone $\mathcal C$ at an arbitrary point $v$ in $\ell\colon $ $\mathcal N_{\ell}\neq \{T_{v}\mathcal C\}$; 

\item [(b)] rays $\ell$ whose Nash fiber is not a unique plane: $\# (\mathcal N_\ell)$ $>1$;
\item [(c)] rays $\ell$ whose Nash fiber has positive dimension: $\dim \mathcal N_\ell\geqslant 1$;
\item [(d)] rays $\ell$ that contains non zero critical values of the canonical projection 
 $$\mathop{pr}\colon \mathcal N'(X)_{(x=0,t=0)}\subset \mathcal C\times \mathcal N(X)_{x=0}\to\mathcal C,\ (v,P) \mapsto v.$$
($\mathcal N_{\ell}$ is precisely the (common) preimage of $v\in \ell$, by this projection). 
\end{enumerate}
We will focus mainly on the criterion (b) in this article, so we define the following set: 
\begin{equation}\label{E}
\mathcal E=\{v\in \mathbb R^n\setminus\{0\}\colon\ \ell:=\mathbb R_+ v\subset\mathcal C\setminus \mathcal C',\; \#(\mathcal N_\ell) > 1\}.
\end{equation}
The choice of criterion (b) as principal interest can be explained in light of our results as follows. We show that criteria (a) and (b) coincide for rays $\ell\not\subset \mathcal C_{sing}\cup\mathcal C'$ with $\dim_0 X= \dim_v\mathcal C$, where $v\in\ell$  and $\dim_0 X$ is the dimension of $X$ at $0$ (Theorem~\ref{Codim1}). For hypersurfaces, we show that (b) and (c) coincide (Theorem \ref{Connected}); they do not in full generality (Example~\ref{Codim2}), 
and we do not know if they coincide for any ray $\ell \not\subset \mathcal C_{sing}.$ Criterion (d) is not studied here since we follow~\cite{OShea2004}, while in view of Singularity Theory, it is a natural candidate. The projection $\mathop{pr}$ and its critical values emerge however here and there during the proofs, and we believe that (d)  deserves its own study.

It is noticeable that the criteria (a), (b) and (c) collapse when $\dim_0 X> \dim\mathcal C$. In order to deal with this situation, we also introduce
the following cone of rays $\ell$ whose Nash fiber has a tangent limit not containing the plane tangent to $\mathcal C$ along $\ell$:
\begin{equation}\label{E'}
\mathcal E'=\{v\in \mathbb R^n\setminus\{0\}:\ \ell:=\mathbb R_+ v\subset \mathcal C\setminus \mathcal C_{sing},\text{ there exists } P\in\mathcal N_\ell \text{ such that } T_v\mathcal C\not\subset P\},
\end{equation}
so $\mathcal E'$ contains rays that are exceptional for a criterion derived from (a) in the case $\dim_0 X> \dim\mathcal C$. 

We now state our results. The first one shows that the rays we call exceptional are rare. For surfaces in $\mathbb R^3$, it recovers the finiteness of the number of exceptional rays and the first item in Theorem~O'Shea--Wilson.

\begin{theorem}[Nowhere dense - Dimension]\label{Codim1}
Let $X\subset\mathbb R^n$ be a definable set of pure dimension $d>0$ at the origin $0\in \R^n$, $\mathcal C$ be its tangent cone at $0$, $\mathcal C'$ be the tangent cone to $(\overline{X})_{sing}$ at $0$, and $\mathcal E, \mathcal E'$ be given by~\eqref{E} and \eqref{E'} respectively. 
Then the following statements hold:
\begin{enumerate}[{\rm (i)}]
\item The set $\mathcal E'\cap\mathbb S^{n-1}$ is nowhere dense in $\mathcal C\cap\mathbb S^{n-1}$. In particular $\dim \mathcal E'<\dim \mathcal C\leqslant d.$
\item Let $\ell$ be a ray in $\mathcal C$ and $v\in\ell$. Assume that $\dim_v \mathcal C=d$. Then $\ell\subset \mathcal E\setminus \mathcal C_{sing}$ if and only if $\ell\subset \mathcal E'\setminus \mathcal C'.$ 
Furthermore, if $\ell\subset\mathcal C\setminus (\mathcal E\cup \mathcal C_{sing}\cup \mathcal C')$, then $\mathcal N_{\ell} = \{T_{v}\mathcal C\}$. In particular $\dim \mathcal E<d$. 
\end{enumerate}
\end{theorem}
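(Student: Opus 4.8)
\emph{The plan.} I would prove (i) first, deduce from it the fact that $T_v\mathcal C\in\mathcal N_\ell$ whenever $v\notin\mathcal C_{\sing}$ and $\dim_v\mathcal C=d$, and then read off (ii) from this fact by the dimension count $\dim T_v\mathcal C=d=\dim P$ for $P\in\mathcal N_\ell$, together with the bounds $\dim\mathcal C_{\sing}<\dim\mathcal C\le d$ and $\dim\mathcal C'\le\dim(\overline X)_{\sing}<d$. Throughout I use that $\mathcal E,\mathcal E'$ and $\ell\mapsto\mathcal N_\ell$ are definable, that $\mathcal N'(X)_{(0,0)}$ is closed and its fibres $\mathcal N_\ell$ are nonempty for rays $\ell\subset\mathcal C$ (from $C_0X=C_0X_{\reg}$ and compactness of $\mathbb G(d,n)$), and that $\dim\mathcal C\le d$. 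Put $e:=\dim\mathcal C$ and let $\mathcal C^\circ\subset\mathcal C$ be the cone of points near which $\mathcal C$ is a $C^1$ manifold of dimension $e$; then $\mathcal C^\circ\cap\mathbb S^{n-1}$ is a $C^1$ manifold of dimension $e-1$, open and dense in $\Sigma:=\mathcal C\cap\mathbb S^{n-1}$, with $(\mathcal C\setminus\mathcal C^\circ)\cap\mathbb S^{n-1}$ of dimension $<e-1$, so it suffices to prove that $\mathcal E'\cap\mathcal C^\circ\cap\mathbb S^{n-1}$ is nowhere dense in $\mathcal C^\circ\cap\mathbb S^{n-1}$.

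\emph{Proof of (i).} Suppose not; being definable, $\mathcal E'\cap\mathcal C^\circ\cap\mathbb S^{n-1}$ then has dimension $e-1$, hence contains a $C^1$ submanifold $U\subset\mathcal C^\circ\cap\mathbb S^{n-1}$ of dimension $e-1$, along which $T_u\mathcal C=\mathbb R u\oplus T_uU$. By definable choice pick a definable $U\ni u\mapsto P_u\in\mathcal N_{\mathbb R_+u}$ with $T_u\mathcal C\not\subset P_u$ for every $u$. Unwinding the definition of $\mathcal N'(X)_{(0,0)}$ and using parametrized curve selection together with a reparametrization, we obtain a definable $(u,s)\mapsto y(u,s)\in X_{\reg}$, $u\in U$, $0<s<\sigma(u)$, with $y(u,s)\to0$, $\tfrac1s y(u,s)\to u$ and $T_{y(u,s)}X\to P_u$ as $s\to0$. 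Put $X_s:=\tfrac1sX$ and $\hat x(u,s):=\tfrac1s y(u,s)\in(X_s)_{\reg}$; then $T_{\hat x(u,s)}X_s=T_{y(u,s)}X$ (dilations preserve tangent directions), and $\hat x(\cdot,0):=\lim_{s\to0}\hat x(\cdot,s)$ is the inclusion $U\hookrightarrow\mathbb R^n$.

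\emph{The key step.} Now I invoke the o-minimal principle that for a definable map $\hat x(u,s)$ whose limit $\hat x(\cdot,0)$ is a $C^1$ embedding of $U$, one has $\partial_u\hat x(u,s)\to\partial_u\hat x(u,0)$ as $s\to0$ for all $u$ outside a definable subset of $U$ of dimension $<e-1$ (equivalently: a definable family of $(e-1)$-dimensional sets converging to an $(e-1)$-manifold converges to it in the $C^1$ sense at a generic point). For such a generic $u$, the subspace $T_{\hat x(u,s)}N_s=d_u\hat x(u,s)(T_uU)$, where $N_s:=\hat x(U,s)\subset X_s$, converges to $T_uU$; since $N_s\subset X_s$, this gives $T_uU\subset P_u$ in the limit. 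Also $u\in P_u$: for fixed $u$ the curve $s\mapsto y(u,s)$ lies in $X_{\reg}$ and $\partial_s y(u,s)=\hat x(u,s)+s\,\partial_s\hat x(u,s)\to u$, the last term tending to $0$ because a definable function of $s$ with a finite limit at $0$ has derivative $o(1/s)$. Hence $T_u\mathcal C=\mathbb R u\oplus T_uU\subset P_u$ for a generic $u\in U$, contradicting the choice of $U$. This proves (i); and since a nowhere dense definable subset of $\Sigma$ has dimension $<\dim\Sigma$ and $\mathcal E'$ is the cone over $\mathcal E'\cap\mathbb S^{n-1}$, we get $\dim\mathcal E'<\dim\mathcal C\le d$.

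\emph{Proof of (ii) and the obstacle.} If $v\notin\mathcal C_{\sing}$ and $\dim_v\mathcal C=d$ (so $e=d$), then by (i) there are $u^k\to v/|v|$ in $(\mathcal C^\circ\cap\mathbb S^{n-1})\setminus\mathcal E'$; for each $u^k$ every $P\in\mathcal N_{\mathbb R_+u^k}$ contains $T_{u^k}\mathcal C$ and so (dimension count) equals it, whence $(u^k,T_{u^k}\mathcal C)\in\mathcal N'(X)_{(0,0)}$; letting $k\to\infty$, using continuity of the Gauss map of $\mathcal C^\circ$ near $v/|v|$ and closedness of $\mathcal N'(X)_{(0,0)}$, gives $T_v\mathcal C\in\mathcal N_\ell$. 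Granting this, the equivalence in (ii) is formal: the conditions $\ell\subset\mathcal C\setminus\mathcal C_{\sing}$ and $\ell\not\subset\mathcal C'$ are common to both sides, and by the dimension count either some $P\in\mathcal N_\ell$ has $T_v\mathcal C\not\subset P$, so $P\ne T_v\mathcal C$, $\{T_v\mathcal C,P\}\subset\mathcal N_\ell$ and $\#\mathcal N_\ell>1$, or every $P\in\mathcal N_\ell$ equals $T_v\mathcal C$ and $\#\mathcal N_\ell=1$. The "furthermore" is the same computation, and $\dim\mathcal E<d$ follows by writing $\mathcal E$ as the union of $\mathcal E\cap(\mathcal C_{\sing}\cup\{v:\dim_v\mathcal C<d\})$, of dimension $<d$, and its complement in $\mathcal E$, which by the equivalence is contained in $\mathcal E'$ and hence has dimension $<\dim\mathcal C\le d$. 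The step I expect to be the main obstacle is the o-minimal genericity principle used in (i): it is essential that it holds only generically in $u$—the ray along which $T_u\mathcal C\not\subset P_u$ may itself be exceptional—which is exactly why one must vary $u$ over an $(e-1)$-parameter family and why the identity $T_{\hat x(u,s)}X_s=T_{y(u,s)}X$, a feature of the dilation family $X_s=\tfrac1sX$ not shared by an arbitrary definable family, is indispensable.
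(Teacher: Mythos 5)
Your outline for part (ii) matches the paper's strategy: show that $T_v\mathcal C$ is itself a tangent limit along $\ell$ whenever $\ell\subset\mathcal C\setminus(\mathcal C_{\sing}\cup\mathcal C')$ and $\dim_v\mathcal C=d$ (the paper isolates this as Lemma~\ref{ContainPlane}, proved by approximating $\ell$ with rays outside $\mathcal C_{\sing}\cup\mathcal E'$ and invoking Lemma~\ref{Retraction}), and then the equivalence of criteria (a) and (b) along $\ell$, the computation of $\mathcal N_\ell$, and the bound $\dim\mathcal E<d$ are all dimension counts. Your rendering is compressed but correct.

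For part (i), however, you and the paper take genuinely different routes, and yours has a gap precisely where the substance lies. The paper forms the definable pair $(B,\widetilde B)$ with $B=\{(u,r)\in\mathbb S^{n-1}\times(0,\varepsilon):ru\in X\setminus X_{\sing}\}$, $\widetilde B=D\times\{0\}$, considers $f(u,r)=r$, and \emph{cites} the theorem that the locus where the Thom $a_f$ condition fails along $\widetilde B$ has dimension $<\dim D$ (Loi~\cite{Ta1997}, Bekka~\cite{Bekka1993}, Kurdyka--Parusi\'nski~\cite{Kurdyka1994}); the Thom condition at $(v,0)$ is then unwound into $\angle(T_{x^k}X,T_vD)\to 0$ by projecting onto the spheres, giving $T_v\mathcal C\subset P$ for every $P\in\mathcal N_\ell$. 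You instead construct (by definable choice and a parametrized curve selection) a definable family $\hat x(u,s)=\tfrac1s y(u,s)$ and invoke an ``o-minimal principle'' that $\partial_u\hat x(u,s)\to\partial_u\hat x(u,0)$ as $s\to0$ for $u$ off a set of dimension $<e-1$. As stated, this is not a citable off-the-shelf result; you present it with no reference and no proof, and it is exactly the mathematical content that the cited Thom-condition theorem encodes (your claim is, modulo the usual cell-decomposition preprocessing to make $\hat x$ piecewise $C^1$ in $u$, the Thom $a_f$ condition for the map $(u,s,x)\mapsto s$ on the closure of the graph of $\hat x$ at its $\{s=0\}$ boundary). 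So either cite the same generic Thom $a_f$ theorem and show how your convergence of $u$-derivatives follows from it, or prove your principle directly; without that, the key step is an unsupported assertion and the argument does not stand. You should also note that the definable selection $u\mapsto P_u$ and the curve family $y(u,s)$ need a preliminary cell decomposition to guarantee the $C^1$ regularity in $u$ on which the ``key step'' implicitly relies; the paper avoids this bookkeeping altogether by applying the Thom theorem to a single pair of strata rather than to a chosen family of curves.
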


\noindent  No analogue of the second item in Theorem~O'Shea--Wilson can be reached in full generality, according to Example~\ref{Codim2}. We however are able to  generalize it for hypersurfaces, as follows.

\begin{theorem}[Connected exceptional Nash fibers]\label{Connected}
Let $X\subset\mathbb R^n$, with $n\geqslant 2$, be a definable set of pure dimension $n-1$ at the origin $0\in \R^n$ and $\mathcal E$ be given by~\eqref{E}. Then for each ray $\ell\subset \mathcal E$, the Nash fiber $\mathcal N_\ell$ is a closed, connected and definable set of positive dimension.
\end{theorem}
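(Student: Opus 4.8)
\emph{The elementary assertions.} Normalise $v\in\ell$ so that $|v|=1$ and write $\omega_0:=v$; identify $\mathbb G(n-1,n)$ with $\mathbb{RP}^{n-1}$ by sending a hyperplane to its normal line, so that the Gauss map $x\mapsto(\text{normal of }T_x\overline X)$ becomes a continuous definable map $G$ on the $C^1$-locus of $\overline X$ and $\mathcal N_\ell\subset\mathbb{RP}^{n-1}$. The set $\mathcal N'(X)$ is the closure of a definable set, hence closed and definable; intersecting it with $\{x=0,t=0\}$ and taking the fibre over the (definable) point $v$ shows that $\mathcal N_\ell$ is a closed definable subset of the compact Grassmannian, independent of the choice of $v\in\ell$. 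Granting connectedness, positive dimension is then immediate: $\#(\mathcal N_\ell)>1$ because $\ell\subset\mathcal E$, and a connected definable set with more than one point has dimension $\geqslant1$ (a $0$-dimensional definable set is finite and a finite connected space is a point). So everything reduces to proving that $\mathcal N_\ell$ is connected.

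\emph{Reduction to a cluster set on the blow-up.} Since $\dim_0 X=n-1$ and $X$ is pure-dimensional, $X$ and $\overline X$ differ by a definable set of dimension $\leqslant n-2$, so the Nash fibre may be computed from $(\overline X)_{\reg}$; also, $x^k\to0$ with $t_k x^k\to v$ amounts to $x^k\to0$ with $x^k/|x^k|\to\omega_0$. Pass to the spherical blow-up $\pi\colon[0,+\infty)\times\mathbb S^{n-1}\to\R^n$, $(r,\omega)\mapsto r\omega$, and let $\widetilde X$ be the strict transform of $\overline X$, whose exceptional part is $\{0\}\times(\mathcal C\cap\mathbb S^{n-1})$. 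Under the normals identification, $\mathcal N_\ell$ is precisely the cluster set of $G\circ\pi$ at the point $(0,\omega_0)$; equivalently $\mathcal N_\ell=\bigcap_{\delta>0}\overline{(G\circ\pi)(U_\delta)}$ for a decreasing neighbourhood basis $\{U_\delta\}$ of $(0,\omega_0)$ in the $C^1$-part of $\widetilde X$ over $\{r>0\}$, each term being a nonempty compact subset of $\mathbb{RP}^{n-1}$ and the terms being nested. Because $\ell\not\subset\mathcal C'$, in a small cone around $\ell$ the set $\overline X$ is $C^1$ off $0$, so near $(0,\omega_0)$ the part of $\widetilde X$ over $\{r>0\}$ is a disjoint union of finitely many connected $C^1$ hypersurfaces $\widehat S_1,\dots,\widehat S_m$, the blow-ups of the branches $S_1,\dots,S_m$ of $\overline X$ at $0$ whose tangent cone contains $\omega_0$. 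Hence $\mathcal N_\ell=\bigcup_{j=1}^m\mathcal N_\ell^{(j)}$ with $\mathcal N_\ell^{(j)}:=\bigcap_\delta\overline{(G\circ\pi)(\widehat S_j\cap U_\delta)}$ a continuum (a nested intersection of continua, $\widehat S_j$ being connected): so $\mathcal N_\ell$ is a finite union of continua, and the whole problem is to see that this union is connected.

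\emph{Gluing the branch contributions.} Extend $G\circ\pi$ over the good part of the exceptional divisor by setting $\widetilde G(0,\omega):=$ the unique normal of $\mathcal N_{\R_+\omega}$ whenever $\#(\mathcal N_{\R_+\omega})=1$ — one checks $\widetilde G$ is continuous there — and put $Z:=\{(0,\omega)\in\widetilde X:\#(\mathcal N_{\R_+\omega})>1\}$. By Theorem~\ref{Codim1} (together with the o-minimal fact that singular loci of pure-dimensional definable sets are lower-dimensional) the cone $\{w\neq 0:\#(\mathcal N_{\R_+w})>1\}$ is contained in $\mathcal E\cup\mathcal C_{\sing}\cup\mathcal C'\cup\{w:\dim_w\mathcal C<n-1\}$, each summand of dimension $\leqslant n-2$; hence $\dim Z\leqslant n-3$, i.e. $Z$ has codimension $\geqslant2$ in the $(n-1)$-dimensional $\widetilde X$, and in particular the good directions are dense in each $C_0\overline{S_j}\cap\mathbb S^{n-1}$ near $\omega_0$. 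Pushing those good directions onto $\widehat S_j$ by continuity of $\widetilde G$ shows that $\mathcal N_\ell^{(j)}$ contains the cluster set $\mathcal M_j$ of $\widetilde G$ at $\omega_0$ along the good directions of $C_0\overline{S_j}\cap\mathbb S^{n-1}$; and since $\mathcal C=\bigcup_j C_0\overline{S_j}$ near $\omega_0$, the sets $\mathcal M_j$ — the ``cone contributions'' of the branches — link up along the common local branches of $\mathcal C$ through $\omega_0$, so that the continua $\mathcal N_\ell^{(j)}\supseteq\mathcal M_j$ fit together into the connected set $\mathcal N_\ell$.

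\emph{The main obstacle.} The delicate point is the last paragraph: making precise, and proving, that the branch contributions glue. This means analysing how the hypersurface $\overline X$ is organised around its tangent cone $\mathcal C$ near $\ell$, exploiting Theorem~\ref{Codim1} to guarantee that the jump locus $Z$ is too thin (codimension $\geqslant2$ in the exceptional divisor) to separate the relevant locally connected configuration, and handling $\mathcal C$ — which must be treated on the same footing as $\overline X$ — by a descending induction on $\dim\mathcal C$, using that $\mathcal C$ is a cone so that limits of its tangent planes along $\ell$ are governed by the Nash data of $\mathcal C\cap\mathbb S^{n-1}$ at $\omega_0$. The continuity of the extension $\widetilde G$ across the regular part of the exceptional divisor is a further routine-but-nontrivial verification.
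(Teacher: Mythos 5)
Your preliminary observations are sound: $\mathcal N_\ell$ is a closed definable subset of the Grassmannian, independent of the choice of $v\in\ell$, and once connectedness is established, positive dimension follows because a connected definable set with more than one point cannot be $0$-dimensional. Your decomposition into branch contributions also parallels the paper: a conical neighbourhood of $\ell$ of a small enough radius meets $X\setminus\{0\}$ in finitely many connected $C^1$ pieces $X_1,\dots,X_m$ (that $\ell\not\subset\mathcal C'$ is used exactly as you use it, to clear the singular locus), each contributing a continuum $\mathcal N_\ell(X_i)$ to $\mathcal N_\ell$. The paper obtains connectedness of each $\mathcal N_\ell(X_i)$ by a direct curve-joining argument (Lemma~\ref{RegularConnected}) rather than by your nested-intersection-of-continua formulation, but this is a cosmetic difference.

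The genuine gap is exactly where you flag it: you have not proved that the branch contributions glue, and the mechanism you sketch for the gluing would not work. Your plan is to show that each $\mathcal N_\ell^{(j)}$ contains points coming from ``good directions'' on $C_0\overline{S_j}\cap\mathbb S^{n-1}$ near $\omega_0$, and that these cone contributions link up because $\mathcal C$ is connected near $\omega_0$. But this argument nowhere uses that $\overline X$ is a \emph{hypersurface}, and without that assumption the theorem is simply false: Example~\ref{Codim2} in the paper exhibits a two-dimensional $X\subset\mathbb R^4$ with $\mathcal N_\ell$ disconnected. Nothing in your ``cone contributions link up'' heuristic distinguishes codimension $1$ from codimension $\geqslant2$, so it cannot yield a correct proof. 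Moreover, different branches $S_i, S_j$ may have entirely disjoint tangent cones through $\omega_0$, and the dimension estimate on the jump locus $Z$ (even granting $\dim Z\leqslant n-3$) says nothing about how the Nash data of one branch relates to that of another — codimension of $Z$ is a statement about the exceptional divisor, not about the mutual position of the branch tangent planes at close-by points.

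The paper's proof of the gluing step is quite different and this is where the hypersurface hypothesis actually enters. For two branches $Y_r:=X_i\cap\mathring{\mathbb B}^n_{r\delta}(rv)$ and $Z_r:=X_j\cap\mathring{\mathbb B}^n_{r\delta}(rv)$ one considers the distance function $\widetilde\rho(y,z)=\|y-z\|$ restricted to $Y_r\times Z_r$. Lemma~\ref{Key} gives the key estimate
\[
\bigl\|\nabla(\widetilde\rho|_{Y_r\times Z_r})(y,z)\bigr\|\;\geqslant\;\frac{\sin\angle(T_yY_r,T_zZ_r)}{\sqrt 2},
\]
and crucially also shows that any critical point of $\widetilde\rho|_{Y_r\times Z_r}$ has $\angle(T_yY,T_zZ)=0$ — this uses that both tangent spaces are $(n-1)$-dimensional, so that orthogonality of $y-z$ to both forces them to coincide. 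Supposing for contradiction that $\theta(r):=\inf\sin\angle(T_yY_r,T_zZ_r)$ stays bounded away from $0$ as $r\to0^+$, one integrates the vector field $-\nabla\rho/\|\nabla\rho\|^2$ on $Y_r\times Z_r$ starting from a pair of points on the two branches close to $rv$; the gradient lower bound keeps the trajectory length short enough to stay inside $Y_r\times Z_r$, so the trajectory must terminate at a critical point, contradicting $\theta(r)>0$. Hence $\theta(r)\to0$ and the two branch fibres share a common limiting plane. This flow-on-a-product argument, together with the Lemma~\ref{Key} angle estimate, is the missing idea your proposal needs; without an analogue of it, the branch contributions have no reason to intersect.
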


\noindent It remains to get an analogue of the last item in Theorem~O'Shea--Wilson. 
For this, we study the rays $\ell$ for which the tangent cone to $\mathcal C$ along $\ell$ is not a plane of dimension $d$. In fact, the following result recovers the last item in Theorem~O'Shea--Wilson. 

\begin{theorem}[Singular cone]\label{Singular} Let $X\subset\mathbb R^n$ be a definable set of pure dimension $d>0$ at the origin $0\in \R^n$, $\mathcal C$ be its tangent cone at $0$, $\mathcal C'$ be the tangent cone to $(\overline{X})_{sing}$ at $0$, and $\mathcal E$ be given by~\eqref{E}. 
If $v\in \mathcal C\setminus\mathcal C'$ is a non zero point such that $C_v\mathcal C$ is not a plane of dimension $d$, then $v\in\mathcal E$.
\end{theorem}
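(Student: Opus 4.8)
The plan is to argue by contradiction. Suppose $v\in\mathcal C\setminus\mathcal C'$, that $C_v\mathcal C$ is not a plane of dimension $d$, but $v\notin\mathcal E$. Set $\ell=\mathbb R_+v$. Since $\mathcal C$ and $\mathcal C'$ are cones we have $\ell\subset\mathcal C\setminus\mathcal C'$; as $v\in\mathcal C$ the fiber $\mathcal N_\ell$ is nonempty, and as $v\notin\mathcal E$ it contains at most one plane, so $\mathcal N_\ell=\{P\}$ for a single $P\in\mathbb G(d,n)$. I will show that these assumptions force $C_v\mathcal C$ to be the $d$-plane $P$, contradicting the hypothesis.

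First I would turn $\mathcal N_\ell=\{P\}$ into a uniform statement: for every $\delta>0$ there are a definable open cone $V\supset\ell$ and $\varepsilon>0$ such that $\dist\bigl(T_xX_{\reg},P\bigr)<\delta$ for all $x\in X_{\reg}\cap V$ with $0<|x|<\varepsilon$. Indeed, otherwise the definable set $\{x\in X_{\reg}:\dist(T_xX,P)\ge\delta\}$ would have $v$ in the closure of its tangent cone at $0$, and curve selection would produce a sequence realizing a plane $Q\in\mathcal N_\ell$ with $\dist(Q,P)\ge\delta$, a contradiction. Because $v\notin\mathcal C'$, after shrinking $V$ and $\varepsilon$ I may also assume $(\overline X)_{\sing}\cap V\cap B(0,\varepsilon)=\emptyset$; since $X$, hence $\overline X$, is pure $d$-dimensional at $0$, the set $M:=\bigl(\overline X\cap V\cap B(0,\varepsilon)\bigr)\setminus\{0\}$ is then a boundaryless definable $C^1$ submanifold of dimension $d$, with $\overline M\cap V=M\cup\{0\}$ and $C_0M\cap V=\mathcal C\cap V$, and $\mathcal N_\ell$ is the set of limits $\lim_kT_{x^k}M$ over sequences $x^k\to0$ in $M$ with $t_k(x^k)\to v$ and $t_k\to+\infty$.

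Next I would rescale. Put $M_t:=tM$; then $M_t\to\mathcal C\cap V$ locally around $v$ as $t\to+\infty$, and the bound above rescales to $\dist(T_yM_t,P)<\delta$ for all $y\in M_t\cap B(v,\eta)$ once $t$ is large, $B(v,\eta)\subset V$ being fixed. Joining nearby points of a limiting copy of $\mathcal C$ by short paths inside $M_{t_k}$, along which the tangent planes stay $\delta$-close to $P$, shows that $\mathcal C$ is ``$\delta$-flat over $v+P$'' near $v$; letting $\delta\downarrow0$ gives $C_v\mathcal C\subset P$, that $\mathcal C$ is tangent to the affine plane $v+P$ at $v$, and that the orthogonal projection $\pi_P$ restricted to $M$ is an immersion in directions close to $\ell$ --- in particular, there, a local diffeomorphism onto an open subset $E\subset P$, with no ``vertical folds''.

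The remaining, and hardest, step is to improve $C_v\mathcal C\subset P$ to $C_v\mathcal C=P$: that is, to show that $\mathcal C$ near $v$ --- being contained in the $d$-plane $v+P$ --- is in fact open in it, equivalently that the shadow $E=\pi_P(M)$ has no spurious border in directions accumulating at $v$. This is exactly where both hypotheses bite: purity of dimension makes $M$ genuinely $d$-dimensional near $\ell$, and $v\notin\mathcal C'$, combined with $\overline M\cap V=M\cup\{0\}$, means that $M$ does not terminate --- in directions close to $\ell$ --- except at $0$. Since $\pi_P|_M$ has no folds near $v$, the only way $M$ could fail to cover a neighborhood of $v$ in $v+P$ is by ``thinning out'' toward $0$ along a $(d-1)$-dimensional family of directions accumulating at $v$; but such thinning forces $(\overline X)_{\sing}$ to have directions arbitrarily close to $v$, hence $v\in\overline{\mathcal C'}=\mathcal C'$, a contradiction. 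Making this last implication rigorous in the general o-minimal setting --- via curve selection, the local conic structure of $\overline M$ near $0$, and the frontier inequality, or by invoking the relevant lemma from the tangent-cone preliminaries --- is the technical core of the argument; I expect it to also dispose of the degenerate possibility $\dim_v\mathcal C<d$, which again forces $v\in\mathcal C'$. Once it is done, $C_v\mathcal C=P$ is a $d$-plane, contrary to hypothesis, so $v\in\mathcal E$.
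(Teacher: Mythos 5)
Your overall strategy differs genuinely from the paper's. The paper does not argue by contradiction: it takes any tangent limit $Q\in\mathcal N_\ell$ and directly exhibits a second element $P\neq Q$, splitting into the case $C_v\mathcal C\not\subset Q$ (handled by Lemma~\ref{ContainPlane} and Corollary~\ref{TP}) and the case $C_v\mathcal C\subsetneq Q$ (handled by rescaling $t_kX$ near $v$ and invoking a delicate distance-minimization Lemma, there called ``perpendicular'', which produces a tangent limit with $\angle(P,Q)=\pi/2$). Your contradiction setup --- assume $\mathcal N_\ell=\{P\}$, upgrade this to uniform flatness in a conical neighborhood $V$ of $\ell$, rescale, and derive $C_v\mathcal C\subset P$ --- is plausible and would absorb the paper's Case~1 automatically, since you obtain $C_v\mathcal C\subset P$ before needing a case split.

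The gap is in the step you yourself flag as ``the technical core'': passing from $C_v\mathcal C\subset P$ to $C_v\mathcal C = P$. The mechanism you propose, that ``thinning out'' of $M$ in directions near $\ell$ forces $(\overline X)_{\mathrm{sing}}$ to have directions accumulating at $v$ and hence $v\in\mathcal C'$, is not correct as stated and is also not how the conclusion actually comes about. Thinning of the rescaled manifolds toward $v$ corresponds (at scale one) to directions being lost at the vertex $0$, not to the appearance of singular points; the standard example $\{x^2+y^2=z^3\}$ shows a smooth surface thinning to a ray without any contribution to $\mathcal C'$. In that example of course $\mathcal N_\ell$ is a whole pencil, so it is not a counterexample to your \emph{conclusion}; but it does show that the causal chain ``thinning $\Rightarrow$ $v\in\mathcal C'$'' is false, and that the genuine obstruction is the compatibility of thinning with uniformly $P$-flat tangent planes. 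What actually closes the argument is a separate fact that you do not prove: after rescaling, $tX\cap B(v,\eta)$ is for large $t$ a \emph{closed} $d$-submanifold of $B(v,\eta)$ (this is where $v\notin\mathcal C'$ and pure dimension enter, exactly as you observe), and a closed $d$-manifold in a ball whose tangent planes are uniformly $\delta$-close to $P$ and which passes near $v$ must be a graph over a full ball in $P$; this properness/covering argument is what forces $\mathcal C$ to fill a neighborhood of $v$ in $v+P$. Either that argument, or the paper's Lemma on perpendicular tangent limits (built on a two-sided distance estimate that itself takes a page to establish), is the real content of the step; deferring it to ``curve selection, local conic structure, frontier inequality, or the relevant lemma from the preliminaries'' understates what is missing, since no such lemma appears among the elementary preliminaries --- the paper had to develop it specifically for this theorem.
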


The paper is organized as follows. We fix the notation which will be used throughout the paper in Section~\ref{OM}. 
This section also contains some basic properties of o-minimal structures needed in the paper. 
In Section~\ref{Definitions}, we give some elementary properties of tangent cones and tangent limits. The main results of the paper will be proved in Section~\ref{Proofs}. In the last section~\ref{Remarks}, we give some remarks and examples.

\section{Preliminaries} \label{OM}

\subsection{Notation}
For the remainder of the paper, we denote by $\|\cdot\|$ the Euclidean norm on $\R^n$ with respect to the Euclidean inner product $\langle\cdot,\cdot\rangle$. 
The closed ball, the open ball and the sphere centered at $x\in\R^n$ and of radius $r$ are denoted respectively by $\B^n_r(x), \ \mathring{\B}^n_r(x) $ and $\mathbb S^{n-1}_r(x)$. 
If $x=0$, we write $\B^n_r,\ \mathring{\B}^n_r$ and $\mathbb S^{n-1}_r$. 
If in addition $r=1$, then we write $\B^n,\ \mathring{\B}^n$ and $\mathbb S^{n-1}$. 
For $X\subset\R^n$, the sets $\overline X$ and $\partial X$ designate respectively the closure and the boundary of $X.$ The cardinality of $X$ is denoted by $\#(X)$.

Let $\dist(X,Y)$ stand for the Euclidean distance between two subsets $X$ and $Y$ of $\mathbb{R}^n,$ i.e.,
\begin{eqnarray*}
\dist(X, Y) := \inf\{\|x - y \|:\ x\in X,\ y \in Y \}.
\end{eqnarray*}
By convention, set $\dist(X,Y)=0$ if $X=\emptyset$ or $Y=\emptyset.$ 
Furthermore, the Hausdorff distance between $X$ and $Y$ is given by
$$\dist_{\mathcal H}(X, Y) := \max\Big\{\sup_{x\in X}\dist(x,Y),\sup_{y\in Y}\dist(y,X)\Big\}.$$

The set $Y\subset\mathbb R^n$ is called the limit of a sequence of subsets $Y_k$ of $\mathbb R^n$, i.e., $Y=\displaystyle\lim_{k\to +\infty}Y_k$, if and only if $\dist_{\mathcal H}(Y_k,Y)\to 0$ as $k\to+\infty$.

For a non empty definable set $X\subset\mathbb R^n$, let $X_{reg}$ be the set of regular points of $X$, which is the set of points where $X$ is a $C^1$-manifold. The complement of $X_{reg}$ in $X$ is denoted by $X_{sing},$ the set of singular points of $X$. 
Note that $X_{sing}$ is nowhere dense in $X.$ 

If $X$ is non empty, for a number $t \in \mathbb{R},$ let 
$$tX := \{tx : \ x\in X\}.$$

Let $v,w\in\mathbb R^n$ be not equal to $0$ simultaneously, denote by $\widehat{v,w}$ the angle between $v$ and $w.$ 
For convenience, if either $v=0$ or $w=0$, set $\displaystyle\widehat{v,w} := \frac{\pi}{2}.$ 
So $0\leqslant\widehat{v,w}=\widehat{w,v}\leqslant{\pi}.$ 
The angle between two rays $\ell_1$ and $\ell_2$, denoted by $\widehat{\ell_1,\ell_2}$, is defined to be the angle between the unit directions in each ray. 
If $V\ne\{0\}$ is a linear subspace of $\mathbb R^n,$ let $\pi_{V}$ be the orthogonal projection on $V$ and the angle between a non zero vector $v$ and $V$ is given by $$\angle(v,V)=\widehat{v,\pi_{V}(v)}.$$
For two linear subspaces $V_1\ne\{0\}$ and $V_2\ne\{0\}$ of $\mathbb R^n$, let $\pi_{V_i}$ be the orthogonal projection on $V_i$ ($i=1,2$). 
We define the angle between $V_1$ and $V_2$ by 
$$\begin{array}{lrll}
\angle(V_1,V_2)&:=&\left\{\begin{array}{llll}
\sup\{\widehat{v,\pi_{V_2}(v)}:\ v\in V_1\setminus\{0\}\} &\text{ if } \dim V_1\leqslant \dim V_2\\
\sup\{\widehat{v,\pi_{V_1}(v)}:\ v\in V_2\setminus\{0\}\} &\text{ if } \dim V_1\geqslant \dim V_2
\end{array}\right.\\
&=&\left\{\begin{array}{llll}
\max\{\widehat{v,\pi_{V_2}(v)}:\ v\in V_1\cap\mathbb S^{n-1}\} &\text{ if } \dim V_1\leqslant \dim V_2\\
\max\{\widehat{v,\pi_{V_1}(v)}:\ v\in V_2\cap\mathbb S^{n-1}\} &\text{ if } \dim V_1\geqslant \dim V_2.
\end{array}\right.
\end{array}$$
Observe that if $\dim V_1=\dim V_2,$ then
$$\sup\{\widehat{v,\pi_{V_2}(v)}:\ v\in V_1\setminus\{0\}\}=\sup\{\widehat {v,\pi_{V_1}(v)}:\ v\in V_2\setminus\{0\}\},$$
so the definition of angle between linear subspaces makes sense.
By definition, 
$$0\leqslant \angle(V_1,V_2)\leqslant \frac{\pi}{2}.$$ 
Furthermore, the equality $\angle(V_1,V_2)=0$ implies that $V_1\subseteq V_2$ or $V_2\subseteq V_1.$ If $V_1$ and $V_2$ are affine subspaces of $\mathbb R^n$, then the angle between $V_1$ and $V_2$ are determined by the angle between the corresponding parallel linear subspaces. It is not hard to verify that $\angle(\cdot,\cdot)$ defines a metric on the Grassmannian of $d$-dimensional linear subspaces of $\mathbb R^n$, for $1\leqslant d\leqslant n.$

\subsection{O-minimal structures}
The notion of o-minimality was developed in the late 1980s after it was noticed that many proofs of analytic and geometric properties of semi-algebraic sets and mappings can be carried over verbatim for sub-analytic sets and mappings.  We refer the reader to \cite{Coste2000, Dries1998, Dries1996} for the basic properties of o-minimal structures used in this paper. 

\begin{definition}{\rm 
A {\em structure} expanding the field of real numbers $\mathbb R$ is a collection 
$$\mathcal R = (\mathcal R_n)_{n\in\N},$$ 
where each $\mathcal R_n$ is a set of subsets of the affine space $\R^n$, satisfying the following axioms:
\begin{enumerate}[{\rm (a)}]
\item All algebraic subsets of $\R^n$ are in $\mathcal R_n$.
\item For every $n$, $\mathcal R_n$ is a Boolean subalgebra of the powerset of $\R^n$.
\item If $A\in \mathcal R_m$ and $B\in \mathcal R_n$, then $A \times B \in \mathcal R_{m+n}$.
\item If $p\colon\R^{n+1}\to\R^n$ is the projection on the first $n$ coordinates and $A\in\mathcal R^{n+1}$, then $p(A)\in\mathcal R_n$.\\
\noindent The elements of $\mathcal R_n$ are called the {\em definable subsets} of $\R^n$. A mapping whose graph is a definable set is called a {\em definable mapping}. The structure $\mathcal R$ is said to be {\em o-minimal} if, moreover, it satisfies the following axiom:
\item Each element of $\mathcal R_1$ is a finite union of points and intervals.
\end{enumerate}}
\end{definition}

Examples of o-minimal structures are:
\begin{itemize}
\item the semi-algebraic sets (by the Tarski--Seidenberg theorem),
\item the globally sub-analytic sets, i.e., the sub-analytic sets of $\mathbb{R}^n$ whose (compact) closures in $\mathbb{R}\mathbb{P}^n$ are sub-analytic (using Gabrielov's complement theorem).
\end{itemize}

From now on, we fix an arbitrary o-minimal structure expanding $\mathbb R$. The term ``definable'' means definable in this structure.
In the sequel, we will make use of the following Curve Selection lemma, Morse--Sard theorem and Hardt's definable triviality theorem. 
\begin{lemma}[Curve Selection]\cite[Lemma 3.1]{Milnor1968},~\cite[1.17]{Dries1996}\label{CurveSelectionLemma}
Let $X\subset \mathbb{R}^n$ be a definable set and $x\in \overline X\setminus X$. Then there is a $C^1$ definable curve $\gamma\colon(0,\varepsilon)\to X\setminus \{x\}$, for some $\varepsilon>0$, such that $\displaystyle\lim_{t\to 0^+}\gamma(t)=x.$
\end{lemma}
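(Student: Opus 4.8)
The plan is to build the curve in one shot by a \emph{definable selection}, parametrized by the distance to $x$ so that convergence to $x$ comes for free, rather than by any iterative or projection argument. Translating, we may assume $x=0$; since $0\in\overline X\setminus X$, the set $X$ is nonempty, $0$ is a limit point of $X$, and $\|y\|>0$ for all $y\in X$. Consider the definable map $\nu\colon X\to(0,+\infty)$, $y\mapsto\|y\|$. Its image $A:=\nu(X)$ is a definable subset of $(0,+\infty)$, hence by o-minimality a finite union of points and intervals. Since $0$ is a limit point of $X$ we have $\inf A=0$ while $0\notin A$; a finite union of points and intervals accumulating at $0$ but omitting $0$ must contain an interval $(0,a)$ for some $a>0$ (its leftmost piece is not a point, and its left endpoint is forced to be $0$). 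So $X\cap\mathbb{S}^{n-1}_t\neq\emptyset$ for every $t\in(0,a)$.

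Now I would apply the Definable Choice Theorem to the definable set
$$\Gamma:=\{(t,y)\in(0,a)\times\R^n:\ y\in X,\ \|y\|=t\},$$
whose projection onto the first factor is all of $(0,a)$ by the previous step. Definable choice (a standard consequence of cell decomposition; see \cite{Dries1998} or \cite{Coste2000}) yields a definable map $\rho\colon(0,a)\to\R^n$ with $\rho(t)\in X$ and $\|\rho(t)\|=t$ for every $t$. Finally I would smooth $\rho$: each coordinate $\rho_i$ is a definable function of one variable, hence of class $C^1$ off a finite subset $F_i\subset(0,a)$ by the Monotonicity Theorem; picking $\varepsilon>0$ with $(0,\varepsilon)\cap\bigcup_{i=1}^n F_i=\emptyset$, the restriction $\gamma:=\rho|_{(0,\varepsilon)}$ is a $C^1$ definable curve. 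It takes values in $X$, and the identity $\|\gamma(t)\|=t$ forces $\gamma(t)\neq0$ and $\lim_{t\to0^+}\gamma(t)=0$ simultaneously; undoing the translation (apply the argument to $X-x$ and set $\gamma(t):=x+\rho(t)$) gives the claim.

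The only place where a genuine idea is needed is ensuring the selected curve really tends to $x$: an arbitrary definable selection from $X$, or one obtained by projecting to a coordinate subspace and lifting back, need not converge to $x$, and controlling such a limit is the technical heart of the more classical proof (induction on $n$ via a $C^1$-cell decomposition compatible with $X$, reducing to a single graph- or band-cell). Parametrizing the selection by $\|y-x\|=t$ circumvents this entirely, so that the argument rests only on the elementary structure of definable subsets of $\R$ together with the two foundational tools cited above; I expect this to be the cleanest route, with no substantial obstacle remaining once the observation about the image of the norm is in place.
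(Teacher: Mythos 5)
Your argument is correct, and it is essentially the standard o-minimal proof of curve selection that the paper simply cites (it gives no proof of its own, referring to \cite{Milnor1968} and \cite{Dries1996}): one applies definable choice to the set $\{(t,y):y\in X,\ \|y-x\|=t\}$, whose projection contains an interval $(0,a)$ by o-minimality of the image of the norm, and then uses the fact that unary definable functions are $C^1$ off a finite set to shrink the domain; parametrizing by $\|y-x\|$ gives both $\gamma(t)\neq x$ and $\lim_{t\to 0^+}\gamma(t)=x$ for free. The only cosmetic point is that the $C^1$ smoothing uses the $C^1$ (or $C^p$) refinement of the Monotonicity Theorem rather than its basic continuous-monotone form, and that definable choice is available because the structure expands the ordered field of reals.
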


\begin{theorem}[Morse--Sard's Theorem]\cite[Theorem 1.4]{TLLoi2008},~\cite[Theorem 2.7]{Wilkie1999}\label{MorseSard}
Let $N$ and $M$ be $C^1$ definable manifolds of dimensions respectively $n$ and $m$ with $n\geqslant m\geqslant 1$, and $f\colon N\to M$ be a $C^1$ definable mapping. 
Let
$$\Sigma(f):= \{x \in N:\  \rank~ d_xf < m\}.$$
Then $f(\Sigma(f))$ is a definable set of dimension less than $m$.
\end{theorem}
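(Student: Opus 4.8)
\emph{The plan is} to argue by contradiction. Suppose $v\in\mathcal{C}\setminus\mathcal{C}'$ is a non-zero point with $C_v\mathcal{C}$ not a $d$-plane, but $v\notin\mathcal{E}$. Since $\mathcal{C}'$ is a cone, $\ell:=\mathbb{R}_+v\subseteq\mathcal{C}\setminus\mathcal{C}'$, so $v\notin\mathcal{E}$ forces $\#(\mathcal{N}_\ell)\leqslant 1$; on the other hand $\mathcal{N}_\ell\neq\emptyset$ and $\mathcal{N}_\ell\subseteq\mathbb{G}(d,n)$, because any thin open cone about $v$ meets $X_{reg}$ arbitrarily close to $0$ (as $X$ is pure $d$-dimensional at $0$), and after normalising such a sequence and passing to a subsequence its tangent $d$-planes converge to a point of $\mathcal{N}_\ell$. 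Hence $\mathcal{N}_\ell=\{P\}$ for a single $P\in\mathbb{G}(d,n)$, and I shall reach a contradiction. Throughout I use the elementary facts (from Section~\ref{Definitions}) that $\mathcal{C}=C_0\overline{X}$, that $\tfrac1tX\to\mathcal{C}$ in the Hausdorff sense on compacta as $t\to0^+$, and that $\mathcal{C}$ — hence $C_v\mathcal{C}$ — is of pure dimension $d$; in particular a definable cone of pure dimension $d$ contained in a $d$-plane $Q$ either equals $Q$ or is a proper subcone with non-empty relative boundary in $Q$.

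\emph{Step 1: $\mathcal{N}_\ell=\{P\}$ flattens $\mathcal{C}$ near $v$.} The equality $\mathcal{N}_\ell=\{P\}$ means that for every $\varepsilon>0$ there is $\delta>0$ with $\angle(T_xX,P)<\varepsilon$ whenever $x\in X_{reg}$, $\|x\|<\delta$ and $\widehat{x,v}<\delta$ (otherwise a limit of such $T_xX$ would be a second point of $\mathcal{N}_\ell$). Rescaling, for $t$ small every regular point of $\tfrac1tX$ in a fixed ball $U\subset\mathbb R^n$ about $v$ has tangent plane within $\varepsilon$ of $P$. Letting $t\to0^+$, and using Hardt's definable triviality theorem to bound, uniformly in $t$, the number of local sheets of $\tfrac1tX$ over $P$ near $v$, one obtains that for every $\varepsilon>0$ the set $\mathcal{C}$ coincides near $v$ with a finite union of graphs of $\varepsilon$-Lipschitz maps over subsets of $P$ (their $\varepsilon$-Lipschitz extensions including $v$ on any sheet approaching it). Consequently, for any $u\in C_v\mathcal{C}$ the component of $u$ orthogonal to $P$ has norm $\leqslant\varepsilon\|u\|$ for all $\varepsilon$, i.e.\ $C_v\mathcal{C}\subseteq P$. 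Since $C_v\mathcal{C}\neq P$ by hypothesis and $C_v\mathcal{C}$ is of pure dimension $d$, it is a proper subcone of $P$; writing $\pi$ for the orthogonal projection onto $P$ and $W:=\overline{\pi(\mathcal{C}\cap U)}$, the $\varepsilon$-flatness yields $C_{\pi(v)}W=C_v\mathcal{C}\subsetneq P$, so $W$ misses a sector at $\pi(v)$: there are a unit vector $\xi\in P$ and constants $\varepsilon_1,s_0>0$ with $\dist(\pi(v)+s\xi,\,W)\geqslant\varepsilon_1 s$ for all $0<s<s_0$.

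\emph{Step 2: desingularise using $v\notin\mathcal{C}'$, then contradict.} Because $v\notin\mathcal{C}'=C_0(\overline{X})_{sing}$, the set $(\overline{X})_{sing}$ avoids a thin cone about $v$ near $0$; after rescaling, $M_t:=\tfrac1t\overline{X}$ is, for every small $t>0$, a boundaryless $C^1$ $d$-manifold on $U$, and by Step~1 its tangent planes on $U$ lie within $\varepsilon<\tfrac\pi2$ of $P$, so $\pi|_{M_t\cap U}$ is a local diffeomorphism onto an open subset of $P$, with no critical point. Choose $v_t\in M_t\cap U$ with $v_t\to v$ (possible as $M_t\to\mathcal{C}\ni v$) and let $\Sigma_t$ be the connected component of $M_t\cap U$ through $v_t$. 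Then $\pi(\Sigma_t)$ is open and contains $\pi(v_t)\to\pi(v)$, while the topological boundary of $\Sigma_t$ lies in $M_t\cap\partial U$, which — again by the $\varepsilon$-flatness — projects into $\{\,a:\|a-\pi(v)\|\geqslant c_0\,\}$ for a fixed $c_0>0$; a connectedness argument then forces $\pi(M_t\cap U)\supseteq\pi(\Sigma_t)\supseteq B(\pi(v),c_0)$ for all small $t$. Fix $s<\min(s_0,c_0)$, so $p:=\pi(v)+s\xi\in\pi(M_t\cap U)$; pick $x_t\in M_t\cap U$ with $\pi(x_t)=p$, which stays well inside $U$ (its distance to $v$ is controlled by $s$ and $\varepsilon$). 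From $M_t\to\mathcal{C}$ there is $c_t\in\mathcal{C}\cap U$ with $\|c_t-x_t\|\to0$, hence $\dist(p,W)\leqslant\|p-\pi(c_t)\|\to0$, contradicting $\dist(p,W)\geqslant\varepsilon_1 s>0$. Therefore $\mathcal{N}_\ell\neq\{P\}$, so $\#(\mathcal{N}_\ell)\geqslant2$, i.e.\ $v\in\mathcal{E}$.

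\emph{The hard part} is the uniform control in Step~1: upgrading the Hausdorff convergence $\tfrac1tX\to\mathcal{C}$ to information about the $C^1$/Lipschitz behaviour of $\mathcal{C}$ near $v$, that is, excluding that $\tfrac1tX$ acquires more and more sheets over $P$, or sheets with degenerating Lipschitz constants, as $t\to0$. This is exactly where o-minimality is indispensable, through Hardt's triviality theorem applied to the deformation family $\{(t,y):ty\in X\}$; the projection $\pi$, its critical set, and the Morse--Sard theorem (Theorem~\ref{MorseSard}) are the natural bookkeeping devices in Step~2, the geometric point being the absence of critical points of $\pi|_{M_t}$. The remaining ingredients — pure-dimensionality of $\mathcal{C}$, the identity $\mathcal{C}=\lim\tfrac1tX$, and the soft topological statement that a critical-point-free boundaryless $d$-manifold approximating $\mathcal{C}$ must shadow onto a full neighbourhood of $\pi(v)$ in $P$ — are routine once that uniformity is available. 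For $n=3$, $d=2$ this recovers the last item of Theorem~O'Shea--Wilson: a singular ray of $\mathcal{C}$ outside $\mathcal{C}'$ is exceptional unless $C_v\mathcal{C}$ is a plane.
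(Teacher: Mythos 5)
Your proposal does not address the statement it is supposed to prove. The statement is the definable Morse--Sard theorem: for a $C^1$ definable mapping $f\colon N\to M$ between definable $C^1$ manifolds, the set of critical values $f(\Sigma(f))$ is definable of dimension strictly less than $m=\dim M$. What you have written is instead an argument (by contradiction, via rescaling, flatness of tangent planes, and a covering argument over $P$) for Theorem~\ref{Singular}, i.e.\ that a non zero $v\in\mathcal C\setminus\mathcal C'$ with $C_v\mathcal C$ not a $d$-plane belongs to $\mathcal E$. Nowhere do you estimate the dimension of the image of the critical locus of a definable map; Theorem~\ref{MorseSard} appears in your text only as a tool invoked in passing. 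Note also that in the paper this statement carries no proof at all: it is quoted from \cite{TLLoi2008} and \cite{Wilkie1999}, where it is obtained by o-minimal techniques (definable cell decomposition/stratification of $\Sigma(f)$ into finitely many $C^1$ pieces on which $f$ has rank $<m$, so that each image, hence the finite union $f(\Sigma(f))$, is definable of dimension $<m$). So there is nothing in your proposal that can be compared with, or substituted for, a proof of the stated result.

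Even read as a blind proof of Theorem~\ref{Singular}, the argument is not complete and differs from the paper's. The paper distinguishes, for a tangent limit $Q$ along $\ell$, the cases $C_v\mathcal C\not\subset Q$ (handled by Corollary~\ref{TP}) and $C_v\mathcal C\subsetneq Q$ (handled by Lemma~\ref{2sides} and the key Lemma~\ref{perpendicular}, which produce a second tangent limit $P$ with $\angle(P,Q)=\tfrac\pi2$ from nearest-point projections onto the rescaled sets near $v+t_lp$, $p\in Q\setminus C_v\mathcal C$). Your Step~1, by contrast, rests on the claim that uniqueness of the tangent limit forces $\mathcal C$ near $v$ to be a finite union of uniformly $\varepsilon$-Lipschitz graphs over $P$, with sheet number and Lipschitz constants controlled uniformly in $t$ for the family $\tfrac1tX$; you yourself flag this as ``the hard part'', and Hardt's theorem applied to $\{(t,y):ty\in X\}$ gives a finite definable trivialization but not, without substantial extra work, the uniform quantitative graph description that Step~2 (the lifting of a path from $\pi(v_t)$ to $p$ inside $U$, and the properness of $\pi|_{\Sigma_t}$) actually requires. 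That uniformity is precisely what the paper's Lemmas~\ref{2sides} and~\ref{perpendicular} are designed to circumvent, so as it stands your argument has a genuine gap at its central step.
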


\begin{theorem}[Hardt's triviality theorem]\cite[Theorem 5.22]{Coste2000},~\cite{Hardt1980},~\cite[Theorem 1.2, p. 142]{Dries1998}
\label{HardtTheorem}
Let $X$ and $Y$ be definable sets and $f \colon X \rightarrow Y$ be a continuous definable mapping.
Then there exists a finite partition $$Y=Y_1\sqcup\dots\sqcup Y_p$$ into definable subsets $Y_i,\ i = 1, \ldots p,$ such that $f$ is definably trivial over each $Y_i,$ i.e., $f^{-1}(Y_i)$ is definably homeomorphic to $f^{-1}(y_i)\times Y_i$ for each $i$ and any $y_i \in Y_i.$
\end{theorem}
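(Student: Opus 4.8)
The plan is to argue by contraposition: I will assume that $v\in\mathcal C\setminus\mathcal C'$, $v\neq 0$, and $v\notin\mathcal E$, and deduce that $C_v\mathcal C$ is a $d$-dimensional linear subspace. Since $\mathcal C$ and $\mathcal C'$ are cones, $\ell:=\mathbb R_+v\subseteq\mathcal C\setminus\mathcal C'$ automatically, so $v\notin\mathcal E$ means $\#(\mathcal N_\ell)\leqslant 1$; and because $v\notin\mathcal C'$ there are points of $X_{reg}$ converging to $0$ with direction converging to $v/\|v\|$, whence $\mathcal N_\ell\neq\emptyset$ and $\mathcal N_\ell=\{P_0\}$ for a single $P_0\in\mathbb G(d,n)$. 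I will also use the elementary fact (Section~\ref{Definitions}) that every plane of $\mathcal N_\ell$ contains $\ell$ (a definable curve in $X_{reg}$ realizing a tangent limit along $\ell$ has coinciding secant and tangent directions at $0$), so $\ell\subseteq P_0$. Phrased positively, the target is: \emph{if $v\notin\mathcal C'$ and $\mathcal N_\ell$ is a single plane $P_0$, then $\mathcal C$ is, near $v$, a relatively open cone inside the $d$-plane $P_0$ containing $\ell$}, which yields $C_v\mathcal C=P_0$. (This contrapositive formulation is convenient because it avoids splitting into the cases ``$\dim C_v\mathcal C=d$ but not linear'' and ``$\dim C_v\mathcal C<d$''.)

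Next I would extract two facts near $\ell$. Since $v\notin\mathcal C'=C_0(\overline X)_{sing}$, there is a conic neighbourhood $V$ of $\ell$ near $0$ in which $\overline X$ is a $C^1$ submanifold $M$ of dimension $d$ \emph{without boundary}, with $X_{reg}$ dense in $M$; hence in directions close to $v$ we have $\mathcal C=C_0X=C_0M$, and the tangent limits along $\ell$ are exactly the limits of $T_xM$. Since $\mathcal N_\ell=\{P_0\}$, a compactness argument using the closedness of $\mathcal N'(X)_{(0,0)}$ (a sequence $x_k\in M$ with $x_k\to 0$, $x_k/\|x_k\|\to v/\|v\|$ and $T_{x_k}M\not\to P_0$ would produce a second plane in $\mathcal N_\ell$) shows that for every $\varepsilon>0$ there are $\delta,\rho>0$ with $\angle(T_xM,P_0)<\varepsilon$ whenever $x\in M$, $0<\|x\|<\rho$, $\angle(x,\ell)<\delta$. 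Fixing a small $\varepsilon_0$ and shrinking $V$ accordingly, I may assume that on all of $M\cap V$ the tangent spaces are uniformly close to $P_0$; writing $\mathbb R^n=P_0\oplus P_0^\perp$, this means $M\cap V$ is a disjoint union of graphs of $C^1$ maps into $P_0^\perp$ with small derivative, so $\pi_{P_0}|_{M\cap V}$ is a local diffeomorphism onto an open subset of $P_0$ and the $P_0^\perp$–component of each $x\in M\cap V$ is $o(\|x\|)$. The latter already forces $\mathcal C\subseteq P_0$ near $v$ (the perpendicular components vanish under the rescalings defining the tangent cone), hence $C_v\mathcal C\subseteq P_0$.

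The core of the argument is then that $\pi_{P_0}|_{M\cap V}$ is in fact a \emph{proper} local diffeomorphism onto a conic(-shell) neighbourhood of $\ell$ in $P_0$: since $M\cap V=\overline X\cap V$ is closed in $V$ and $M$ has no boundary, the preimage of a compact subset of $P_0$ lying close to $\ell$ and bounded away from $0$ (and from the spherical part of $\partial V$) stays inside $V$ and is therefore compact. A proper local diffeomorphism is a covering map, and a covering of a conic(-shell) neighbourhood of $\ell$ in $P_0$ — such a region is connected and simply connected — is trivial; letting the radii vary, $\pi_{P_0}(M\cap V)$ contains a punctured conic neighbourhood $N$ of $\ell$ in $P_0$. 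Pulling back one sheet over $N$ and using that its $P_0^\perp$–component is $o(\|\cdot\|)$, one gets $C_0M\supseteq N$, hence $\mathcal C\supseteq N$ near $v$; combined with $\mathcal C\subseteq P_0$ near $v$, this shows $\mathcal C$ coincides near $v$ with a relatively open cone in $P_0$ containing $\ell$ as an interior ray. Therefore $C_v\mathcal C$ equals the tangent cone at an interior point of an open subset of the $d$-plane $P_0$, namely $P_0$ itself — a $d$-dimensional linear subspace, which completes the contrapositive.

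I expect the main obstacle to be exactly this last geometric step: upgrading ``all nearby tangent spaces of $M$ are close to $P_0$'' to ``$\mathcal C$ fills out a full neighbourhood of $\ell$ inside $P_0$''. The inclusion $\mathcal C\subseteq P_0$ near $v$ is the quantitative first–order tangency and is not hard once the uniform estimate above is in place; the reverse inclusion requires excluding cuspidal or otherwise ``thin'' behaviour of $M$ around $\ell$, and this is precisely where $v\notin\mathcal C'$ must be used — a cusp of $\pi_{P_0}(M\cap V)$ along $\ell$ would pull back to a frontier of $M$ inside $V$, i.e. to singular points of $\overline X$ accumulating at $0$ in the direction $v$, contradicting $v\notin\mathcal C'$; the properness/covering packaging above is one way to make this rigorous. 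The remaining ingredients — nonemptiness of $\mathcal N_\ell$, coincidence of secant and tangent directions for definable curves, and the compactness argument producing the uniform closeness — are routine or available from the preliminary sections.
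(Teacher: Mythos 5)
Your proposal does not address the statement it is supposed to prove. The statement is Theorem~\ref{HardtTheorem} (Hardt's definable triviality theorem), which in this paper is a quoted background result, cited from \cite{Coste2000}, \cite{Hardt1980} and \cite{Dries1998} and not proved here at all; a proof of it would have to produce the finite definable partition of $Y$ and the trivializing homeomorphisms (via cell decomposition or the arguments of the cited references). What you wrote instead is an attempted proof, by contraposition, of Theorem~\ref{Singular} (the ``Singular cone'' theorem). So as an answer to the given statement it is simply off target.

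Even read as a proof of Theorem~\ref{Singular}, the argument has a genuine gap. From $\mathcal N_\ell=\{P_0\}$ you only get: for every $\varepsilon>0$ there are $\delta,\rho$ such that $\angle(T_xM,P_0)<\varepsilon$ for $x\in M$ with $\|x\|<\rho$ and $\widehat{\ell,\ell_x}<\delta$; the perpendicular component of such $x$ is then $O(\delta\|x\|)$, \emph{not} $o(\|x\|)$ uniformly on a fixed conic neighbourhood (it is $o(\|x\|)$ only along sequences whose directions converge to $v$). Consequently both of your key deductions fail: neither ``$\mathcal C\subseteq P_0$ near $v$'' nor ``$\mathcal C\supseteq$ a conic neighbourhood of $\ell$ in $P_0$'' follows, and in fact both are false in general. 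Take $X=\mathcal C=\{z^2=x^2+y^2,\ z\geqslant 0\}\subset\R^3$ and $v=(1,0,1)$: here $v\notin\mathcal C'$, $\mathcal N_\ell$ is the single plane $P_0=T_v\mathcal C$, yet $\mathcal C$ near $v$ is a curved cone that is neither contained in $P_0$ nor contains any relatively open cone of $P_0$; the sheet of $M$ over a fixed ray of $P_0$ at angular distance $\theta_0>0$ from $\ell$ has perpendicular component comparable to $\theta_0^2\|x\|$, so its tangent-cone directions are not in $P_0$. The correct conclusion of the contrapositive is only that $C_v\mathcal C$ equals the $d$-plane $P_0$, and establishing this is precisely the hard content of the paper's forward proof: the case $C_v\mathcal C\not\subseteq Q$ is handled through Lemma~\ref{ContainPlane} and Corollary~\ref{TP}, and the case $C_v\mathcal C\subsetneq Q$ through the rescaling argument of Lemma~\ref{perpendicular} (using Lemma~\ref{2sides}) which produces a second tangent limit perpendicular to $Q$. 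Your covering-map packaging does not substitute for either of these steps.
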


Let $X\subset\mathbb R^n$  be a definable set. 
We define the {\em dimension} of $X$ by
$$\dim X:=\max\{\dim Y:\ Y \text{ is a } C^1 \text{-manifold contained in } X\}.$$
For $x\in \overline X$, the {\em dimension of $X$ at $x$} is defined by
$$\dim_x X:=\min\{\dim (X\cap U):\ U \text{ is an open neighborhood of }x\text{ in }\R^n\}.$$ 
Moreover, we say that $X$ is of {\em pure dimension $d$ at $x$} if there exists an open neighborhood $U$ of $x$ in $\R^n$ such that $\dim_yX=d$ for any $y\in X\cap U.$ 
Finally, we say that $X$ has {\em pure dimension} $d$ if $\dim_y X=d$ for any $y\in X$. 

\begin{lemma}\cite[1.16(3)]{Dries1996}\label{DimP} 
Let $X$ be a definable set in $\mathbb{R}^n.$ Then the following statements hold. 
\begin{enumerate}[{\rm (i)}]
\item If $X\ne\emptyset$ then $\dim(\overline{X}\setminus X)<\dim X.$ In particular, $\dim\overline{X}=\dim X.$
\item For any $x\in \overline X$, we have $\dim_x X=\dim_x\overline X.$ Moreover $X$ is of pure dimension $d$ at $x$ if and only if $\overline X$ is of pure dimension $d$ at $x.$
\end{enumerate}
\end{lemma}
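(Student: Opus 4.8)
The plan is to treat~(i) and~(ii) separately and to deduce~(ii) formally from~(i); both are classical facts of o-minimal dimension theory (see~\cite[Chapter~4]{Dries1998},~\cite[1.16]{Dries1996}), and the only non-formal input is the cell decomposition theorem together with the fact that the frontier of a cell has strictly smaller dimension than the cell.

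For~(i), the first step is to apply the cell decomposition theorem to obtain a finite partition of $\mathbb{R}^n$ into $C^1$ cells compatible with $X$ (each cell either contained in $X$ or disjoint from it). Writing $X = C_1 \sqcup \dots \sqcup C_m$ as the disjoint union of the cells it contains, one has $\overline{X} = \bigcup_{i=1}^m \overline{C_i}$ (a finite union), hence $\overline{X}\setminus X \subseteq \bigcup_{i=1}^m (\overline{C_i}\setminus C_i)$ since $C_i \subseteq X$. Using that the dimension of a finite union of definable sets is the maximum of the dimensions and that $\dim X = \max_i \dim C_i$, the inequality $\dim(\overline{X}\setminus X) < \dim X$ reduces to the one-cell statement $\dim(\overline{C}\setminus C) < \dim C$. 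This reduction done, the remaining (and only substantial) point is the one-cell inequality, which one proves by induction on the ambient dimension from the recursive description of a cell as the graph of a continuous definable function over a lower-dimensional cell, or as a band between two such graphs, the limiting behaviour of the defining functions at frontier points being controlled by the monotonicity theorem in the last coordinate. Finally $\dim\overline{X} = \max\big(\dim X, \dim(\overline{X}\setminus X)\big) = \dim X$ is immediate.

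For~(ii), fix $x\in\overline{X}$. The inclusion $X\cap U\subseteq\overline{X}\cap U$ for every open neighbourhood $U$ of $x$ gives $\dim_x X\leqslant\dim_x\overline{X}$ at once. For the reverse inequality I would use that $\overline{X}\cap U\subseteq\overline{X\cap U}$ when $U$ is open (a point of $\overline{X}$ lying in the open set $U$ is a limit of a sequence in $X$ eventually lying in $U$); since $x\in\overline{X}$ forces $X\cap U\neq\emptyset$, applying part~(i) to the nonempty definable set $X\cap U$ gives $\dim\overline{X\cap U}=\dim(X\cap U)$, whence $\dim(\overline{X}\cap U)\leqslant\dim(X\cap U)$, and taking the infimum over $U$ yields $\dim_x\overline{X}\leqslant\dim_x X$, so $\dim_x X=\dim_x\overline{X}$. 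For purity, the implication ``$\overline{X}$ pure of dimension $d$ at $x$'' $\Rightarrow$ ``$X$ pure of dimension $d$ at $x$'' follows directly from this equality of local dimensions on a witnessing neighbourhood. For the converse, assuming $\dim_yX=d$ for all $y\in X\cap U$, one first notes $\dim(X\cap U)=d$ (it is $\geqslant d$ because $\dim(X\cap U)\geqslant\dim_zX$ for any $z\in X\cap U$, and $\leqslant d$ because at a point $z$ of a top-dimensional $C^1$-submanifold of $X\cap U$ one has $\dim_zX\geqslant\dim(X\cap U)$), and then, for $y\in\overline{X}\cap U$, squeezes $d\leqslant\dim_yX\leqslant\dim(X\cap U)=d$ --- the lower bound because every small neighbourhood $V$ of $y$ meets $X\cap U$ in a point $z$ with $\dim(X\cap V)\geqslant\dim_zX=d$ --- so $\dim_y\overline{X}=\dim_yX=d$ for all such $y$, i.e.\ $\overline{X}$ is pure of dimension $d$ at $x$.

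The main obstacle is concentrated entirely in the one-cell frontier inequality $\dim(\overline{C}\setminus C)<\dim C$: this is the place where the cell decomposition theorem and the inductive analysis of cells are genuinely needed, whereas the reduction in~(i) and the whole of~(ii) are formal consequences of the definitions and of the finite-union and monotonicity properties of the dimension.
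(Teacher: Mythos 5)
Your proposal is correct, and there is nothing in the paper to diverge from: the lemma is quoted from \cite[1.16]{Dries1996} without proof, so your write-up is a reconstruction of the standard argument rather than an alternative to an in-paper one. Your reduction of (i) to the one-cell frontier inequality $\dim(\overline{C}\setminus C)<\dim C$ (which is exactly the substantive content of the cited result, cf.\ also \cite[Ch.~4]{Dries1998}) and your purely formal derivation of (ii) from (i) --- the equality of local dimensions via $\overline{X}\cap U\subseteq\overline{X\cap U}$ for open $U$, and the two purity implications by the squeeze $d\leqslant\dim_y X\leqslant\dim(X\cap U)=d$ --- are all sound.
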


\section{Tangent cones and tangent limits}\label{Definitions}

In this section, some elementary properties of tangent cones and tangent limits will be given. 
First of all, we state the following simple lemma whose proof is left to the reader.
\begin{lemma} \label{Lemma31}
Let $C \subset \mathbb{R}^n$ be a definable cone at the origin $0$ and let $D := C \cap \mathbb{S}^{n - 1}_R,$ where $R\in(0,+\infty).$ The following statements hold true:
\begin{enumerate}[{\rm (i)}]
\item $D$ is definable.

\item A non zero point $v\in C$ is a singular point of $C$ if and only if the ray through $v$ contains only singular points of $C.$ In particular, $C_{sing}$ is also a cone and we have 
$$C_{sing}\cap \mathbb S^{n-1}_R = D_{sing}.$$
\item If $v \in D \setminus D_{sing},$ then for all $t > 0$ we have $t v \in C \setminus C_{sing}$ and $T_{tv}C \cong  T_v D \oplus \mathbb{R}v.$
\item $C \setminus \{0\} $ is homeomorphic to $D \times (0, +\infty).$ In particular, $\dim_v C = \dim_vD + 1$ for all $v \in D$ and so $\dim C = \dim D + 1.$ 
\end{enumerate}
\end{lemma}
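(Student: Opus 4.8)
\medskip

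\noindent\textbf{Proof proposal.} The plan is to deduce all four items from two elementary facts: the homotheties $\mu_t\colon\R^n\to\R^n$, $x\mapsto tx$ (for $t>0$), are linear diffeomorphisms preserving $C$ (since $C$ is a cone, $\mu_t(C)=C$), and ``polar coordinates'' identify $\R^n\setminus\{0\}$ with $\mathbb S^{n-1}_R\times(0,+\infty)$ in a way that turns $C\setminus\{0\}$ into a product. Item (i) is immediate, since $\mathbb S^{n-1}_R$ is an algebraic set and $\mathcal R_n$ is a Boolean algebra containing $C$. For the remaining items I would introduce the definable $C^\infty$ diffeomorphism
$$\Psi\colon\R^n\setminus\{0\}\ \longrightarrow\ \mathbb S^{n-1}_R\times(0,+\infty),\qquad x\longmapsto\Bigl(\tfrac{R}{\|x\|}\,x,\ \|x\|\Bigr),$$
with inverse $(u,\rho)\mapsto\tfrac{\rho}{R}u$. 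Because $C$ is a cone one checks directly that $x\in C\setminus\{0\}$ if and only if $\tfrac{R}{\|x\|}x\in D$; hence $\Psi$ carries $C\setminus\{0\}$ onto $D\times(0,+\infty)$ and restricts to a definable homeomorphism $C\setminus\{0\}\cong D\times(0,+\infty)$. This is already the homeomorphism asserted in (iv).

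Next I would settle regularity and dimension. Since a $C^\infty$ diffeomorphism preserves, for any subset of its domain, the set of $C^1$-points and the local dimension at each point, $C$ is $C^1$ at a point $v\neq 0$ exactly when $D\times(0,+\infty)$ is $C^1$ at $\Psi(v)$, with $\dim_vC=\dim_{\Psi(v)}\bigl(D\times(0,+\infty)\bigr)$. The one point worth checking — the only genuine obstacle in the whole proof — is that, for $w\in D$, the product $D\times(0,+\infty)$ is a $C^1$-manifold at $(w,\rho)$ if and only if $D$ is a $C^1$-manifold at $w$: the ``if'' direction is clear, and for the converse, if the product is a $C^1$-submanifold near $(w,\rho)$ then through each of its points passes a vertical segment contained in it, so the last coordinate restricts to a $C^1$-submersion on it whose fibre over $\rho$ is $D\times\{\rho\}$ locally near $w$, hence a $C^1$-submanifold. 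Consequently, for $v$ with $\|v\|=R$ one gets $v\in C_{sing}\iff v\in D_{sing}$ and $\dim_vC=\dim_vD+1$; combined with the $\mu_t$-invariance of $C$ (hence of $C_{sing}$ and of $C\setminus C_{sing}$) this yields item (ii) in full (a non zero $v$ lies in $C_{sing}$ iff the whole ray $\mathbb R_+v$ does, $C_{sing}$ is a cone, and $C_{sing}\cap\mathbb S^{n-1}_R=D_{sing}$), and, together with $\dim\bigl(A\times(0,+\infty)\bigr)=\dim A+1$ and $\dim\{0\}=0$, the dimension formulas of (iv).

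Finally, for (iii) I would fix $v\in D\setminus D_{sing}$. By (ii), $tv\notin C_{sing}$ for every $t>0$; and since $\mu_t$ is linear, $T_{tv}C=\mu_t(T_vC)=t\,T_vC=T_vC$, so it suffices to treat $t=1$. Now $\mathbb R_+v\subset C$ forces $\mathbb Rv\subseteq T_vC$, and $D\subseteq C$ with $D$ regular at $v$ forces $T_vD\subseteq T_vC$; moreover $T_vD\subseteq T_v\mathbb S^{n-1}_R=v^{\perp}$, so $T_vD+\mathbb Rv$ is a direct sum inside $T_vC$. Counting dimensions, $\dim(T_vD\oplus\mathbb Rv)=\dim_vD+1=\dim_vC=\dim T_vC$ by (iv), whence $T_vC=T_vD\oplus\mathbb Rv$, i.e. $T_{tv}C=T_vD\oplus\mathbb Rv$ for all $t>0$. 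The degenerate case $C\subseteq\{0\}$ (so $D=\emptyset$) is trivial throughout. As indicated, the only step requiring care is the local $C^1$-product reduction in the second paragraph; everything else is bookkeeping with the linear maps $\mu_t$, the diffeomorphism $\Psi$, and the standard o-minimal dimension facts (the product formula and invariance of dimension under definable homeomorphisms).
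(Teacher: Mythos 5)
Your proof is correct. There is nothing to compare against: the paper explicitly states that the proof of this lemma ``is left to the reader.'' That said, your central device — the definable $C^\infty$ diffeomorphism $\Psi\colon\R^n\setminus\{0\}\to\mathbb S^{n-1}_R\times(0,+\infty)$, $x\mapsto\bigl(\tfrac{R}{\|x\|}x,\|x\|\bigr)$, which carries $C\setminus\{0\}$ onto $D\times(0,+\infty)$ — is exactly the inverse of the map $\phi\colon\mathbb S^{n-1}\times(0,+\infty)\to\R^n\setminus\{0\}$, $(u,r)\mapsto ru$, that the authors themselves use (with $R=1$) in the proof of Theorem~\ref{Codim1}, so the route you chose is in line with the paper's own techniques. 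You correctly identified the only non-obvious point, namely the converse direction of the reduction ``$D\times(0,+\infty)$ is $C^1$ at $(w,\rho)$ iff $D$ is $C^1$ at $w$,'' and your vertical-line/submersion argument for it is sound: the vertical lines in the product force the last-coordinate projection to be a $C^1$-submersion on the (locally $C^1$) product, so its fibre $D\times\{\rho\}$ is $C^1$, hence so is $D$. The dimension-counting argument for (iii), using (iv) and $T_vD\subseteq v^{\perp}$, and the $\mu_t$-invariance arguments for (ii) are all in order.
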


\begin{remark}{\rm 
In view of Lemma~\ref{Lemma31}(iv), the dimension of $ C$ at any point $v$ in a ray $\ell$ of $C$ is constant.  If $\ell$ is a non singular ray in $C$, for all $v\in\ell$, the tangent spaces $T_v C$ define the same plane in the Grassmannian $\mathbb G(\dim_v C,n)$. Moreover, it is not hard to check that the Nash fiber of $C$ along $\ell$ contains only one element which is $T_v C.$ 
}\end{remark}

For the remainder of the section, let $X$ be a definable set with $0\in \overline X$. Recall that $\mathcal C:=C_0 X$ is the tangent cone to $X$ at $0.$
The following lemma is a definable version of \cite[Lemma~1.2]{Kurdyka1989}.

\begin{lemma}\label{Lemma32} 
The set $\mathcal C$ is a nonempty closed definable cone of dimension at most $\dim_0 X.$
In addition, if $\dim_0 X>0$, then $\mathcal C$ has positive dimension.
\end{lemma}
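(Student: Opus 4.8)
The plan is to reduce everything to the \emph{spherical blow-up} of $X$ at the origin. Since $C_0 X$ depends only on the germ of $X$ at $0$, I first fix $r>0$ small enough that $\dim(X\cap\mathring{\B}^n_r)=\dim_0 X=:d$ (possible by the definition of $\dim_0 X$), set $Y:=(X\cap\mathring{\B}^n_r)\setminus\{0\}$, and introduce the semialgebraic homeomorphism $\sigma\colon(0,r)\times\mathbb S^{n-1}\to\mathring{\B}^n_r\setminus\{0\}$, $\sigma(s,u)=su$. Put $\widetilde Y:=\sigma^{-1}(Y)$; it is definable and, as definable homeomorphisms preserve dimension, $\dim\widetilde Y=\dim Y\leqslant d$. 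I would then record the elementary description
$$\mathcal C=\{0\}\cup\{su:\ s>0,\ u\in D\},\qquad D:=\{u\in\mathbb S^{n-1}:\ (0,u)\in\overline{\widetilde Y}\},$$
the closure taken in $\R\times\R^n$: a nonzero $v$ lies in $\mathcal C$ exactly when there are $x^k\in X$ with $x^k\to0$ and $x^k/\|x^k\|\to v/\|v\|$ (divide $t_k x^k\to v$ by its norm; conversely rescale), which says precisely $v/\|v\|\in D$; and $0\in\mathcal C$ because $0\in\overline X$. From this presentation the statement ``$\mathcal C$ is a nonempty cone'' is immediate (rescale the $s$-variable), and ``$\mathcal C$ is definable'' follows because $D$ is identified with the definable set $\overline{\widetilde Y}\cap(\{0\}\times\mathbb S^{n-1})$, so $\mathcal C$ is the union of $\{0\}$ with the image of the definable set $(0,+\infty)\times D$ under $(s,u)\mapsto su$.

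For closedness, note $D$ is closed and bounded, hence compact; then $\{0\}\cup\R_{>0}D$ is closed by a one-line argument (if $s_j u_j\to w\neq0$ then $s_j\to\|w\|>0$ and $u_j\to w/\|w\|\in D$), or, if preferred, directly by a diagonal extraction on the sequences defining points of $\mathcal C$. For the dimension bound I would apply Lemma~\ref{Lemma31}(iv) to the definable cone $\mathcal C$ with $R=1$: $\mathcal C\setminus\{0\}$ is homeomorphic to $(\mathcal C\cap\mathbb S^{n-1})\times(0,+\infty)=D\times(0,+\infty)$, whence $\dim\mathcal C=\dim D+1$ (with $\mathcal C=\{0\}$, $\dim\mathcal C=0$, in the degenerate case $D=\emptyset$). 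Since $\widetilde Y\subset(0,r)\times\mathbb S^{n-1}$ is disjoint from $\{0\}\times\mathbb S^{n-1}$, the slice $\overline{\widetilde Y}\cap(\{0\}\times\mathbb S^{n-1})$ — which is homeomorphic to $D$ — is contained in $\overline{\widetilde Y}\setminus\widetilde Y$, so by Lemma~\ref{DimP}(i), $\dim D\leqslant\dim(\overline{\widetilde Y}\setminus\widetilde Y)<\dim\widetilde Y\leqslant d$, giving $\dim\mathcal C=\dim D+1\leqslant d$ (the cases $\widetilde Y=\emptyset$ or $D=\emptyset$ force $d=0$ or $\mathcal C=\{0\}$ and are trivial). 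Finally, if $\dim_0 X>0$, then each $X\cap\mathring{\B}^n_{1/k}$ is infinite, so there are $x^k\in X\setminus\{0\}$ with $x^k\to0$; a subsequence of $x^k/\|x^k\|$ converges to some $u\in\mathbb S^{n-1}$, hence $u\in D$ and $\dim\mathcal C=\dim D+1\geqslant1$.

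The only genuinely nontrivial ingredient is the passage through the blow-up $\sigma$: once $\mathcal C$ is exhibited as the cone over the ``exceptional-sphere'' slice $\overline{\widetilde Y}\cap(\{0\}\times\mathbb S^{n-1})$ of the definable set $\widetilde Y$, definability and the dimension drop both follow from the general o-minimal facts already quoted (Lemmas~\ref{DimP} and~\ref{Lemma31}). I expect the points requiring care to be purely bookkeeping: checking that the direction set of $\mathcal C$ is \emph{exactly} that boundary slice, and disposing of the low-dimensional degenerate cases where $\widetilde Y$ or $D$ is empty.
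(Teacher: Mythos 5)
Your argument is correct, but it follows a genuinely different route from the paper for the dimension bound, so a comparison is worthwhile. The paper constructs
$$A_r := \Bigl\{\Bigl(\tfrac{x}{t},t\Bigr)\in\mathbb R^n\times(0,+\infty):\ x\in X\cap\mathring{\mathbb B}^n_r\Bigr\},$$
which is homeomorphic to $(X\cap\mathring{\mathbb B}^n_r)\times(0,+\infty)$ and hence has dimension $\dim_0 X+1$; it then observes $\mathcal C\times\{0\}\subset\overline{A}_r\setminus A_r$ and invokes Lemma~\ref{DimP}(i) to get $\dim\mathcal C<\dim_0 X+1$. You instead use the polar-coordinate blow-up $\sigma(s,u)=su$, identify the direction set $D=\mathcal C\cap\mathbb S^{n-1}$ with the boundary slice $\overline{\widetilde Y}\cap(\{0\}\times\mathbb S^{n-1})$, apply Lemma~\ref{DimP}(i) to the pair $\widetilde Y\subset\overline{\widetilde Y}$ (which has the same dimension as $X$ near $0$, not one more), and then recover $\dim\mathcal C=\dim D+1$ from Lemma~\ref{Lemma31}(iv). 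Both arguments rest on the same frontier-dimension lemma; yours trades the extra scaling variable $t$ for a pass through the sphere and an extra appeal to Lemma~\ref{Lemma31}(iv). Your version also buys a clean structural description $\mathcal C=\{0\}\cup\mathbb R_{>0}D$ that makes the ``nonempty closed definable cone'' claim and the positive-dimension claim explicit (the paper dismisses both as clear), and you correctly flag and dispose of the degenerate cases $\widetilde Y=\emptyset$ or $D=\emptyset$ where the formula $\dim\mathcal C=\dim D+1$ would otherwise be vacuous. Everything checks out; in particular your verification that a nonzero $v$ lies in $\mathcal C$ iff $v/\|v\|\in D$, and the compactness argument for closedness of $\mathcal C$, are both sound.
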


\begin{proof} 
The last statement is clear so it remains to prove the first one.  By definition, it is easy to check that $\mathcal C$ is a nonempty closed definable cone. 
Let us show that $\dim \mathcal C  \leqslant \dim_0 X.$ To do this, for each $r > 0,$ define
$$A_r :=\Big\{\Big(\frac{x}{t}, t\Big) \in \mathbb{R}^n \times (0,+\infty): \ x \in X \cap \mathring{\mathbb{B}}^n_r\Big\}.$$
Obviously, $A_r$ is a nonempty definable set, which is homeomorphic to $(X \cap \mathring{\mathbb{B}}^n_r)\times (0,+\infty).$ Hence, for all $r$ sufficiently small, we have 
$$\dim A_r = \dim_0 X + 1.$$
Observe that 
$$\mathcal C \times \{0\} \subset \overline{A}_r \cap (\mathbb{R}^n \times \{0\}) \subset \overline{A}_r \setminus A_r.$$ 
Hence
$$\dim \mathcal C  \leqslant \dim (\overline{A}_r \setminus A_r) < \dim A_r = \dim_0 X + 1,$$
where the second inequality follows from Lemma~\ref{DimP}. 
This ends the proof of the lemma.
\end{proof}

Recall that the term ``ray'' means ``open ray emanating from the origin $0\in\R^n$''.
The following lemma shows that tangent cones and Nash fibers are invariant by taking closure.  
\begin{lemma}\label{NotEmpty} 
Suppose that $X$ is of pure dimension $d>0$ at the origin $0\in \R^n$. 
Then $C_0 X=C_0 \overline X$. 
Moreover, $\mathcal N_\ell(X)=\mathcal N_\ell(\overline X)\ne\emptyset$ for any ray $\ell\subset C_0 X,$ 
where $\mathcal N_\ell(X)$ and $\mathcal N_\ell(\overline X)$ are respectively the Nash fibers of $X$ and $\overline X$ along $\ell.$ 
\end{lemma}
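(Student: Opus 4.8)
The plan is to establish the two claims separately, in the order: first the equality of tangent cones $C_0 X = C_0 \overline X$, then the equality and non-emptiness of Nash fibers. For the tangent cones, the inclusion $C_0 X \subset C_0 \overline X$ is immediate from the definition, since $X \subset \overline X$. For the reverse, I would use the Curve Selection Lemma (Lemma~\ref{CurveSelectionLemma}): given $v \in C_0\overline X\setminus\{0\}$, realize it via sequences $\bar x^k \in \overline X$, $t_k>0$ with $\bar x^k \to 0$ and $t_k \bar x^k \to v$. Since $\dim(\overline X\setminus X) < \dim_0 X$ by Lemma~\ref{DimP}(i) while $\overline X$ has pure dimension $d$ at $0$, points of $\overline X\setminus X$ are ``too few'' to carry the tangent cone; more precisely, one shifts each $\bar x^k$ slightly inside $X$. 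The clean way is to apply Curve Selection to a curve in $X$ whose limit tangent direction is $v$: take the definable set $\{(x,t)\in X\times(0,+\infty): \|x\|=1/t\}$ (or a cone-and-blow-up construction as in Lemma~\ref{Lemma32}), note that $v/\|v\|$ lies in the closure of its projection to the sphere, and select a definable curve in $X$ accumulating at $0$ with the right tangent direction. Since $X$ is dense in $\overline X$ near $0$ (as $\dim_0(\overline X\setminus X)<d=\dim_0 X$), any direction approached from $\overline X$ is approached from $X$.

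For the Nash fibers, recall $\mathcal N_\ell(Y)$ is built from $Y_{reg}$. The key point is that $X_{reg} = (\overline X)_{reg}$ near $0$ up to a set of smaller dimension: indeed $(\overline X)_{reg}\setminus X \subset \overline X\setminus X$ has dimension $<d$, and $X_{reg}\setminus(\overline X)_{reg}\subset X_{sing}$. Since a limit of tangent spaces taken along a curve in $Y_{reg}$ only sees the top-dimensional stratum, and $\mathcal N_\ell$ is defined as a closure, removing or adding a definable set of dimension $<d$ from the regular locus does not change which pairs $(v,P)$ arise — one uses that near a $C^1$ point of $\overline X$ of local dimension $d$, the set $X$ contains a full neighborhood in $(\overline X)_{reg}$ minus something of lower dimension, so tangent spaces at nearby points of $X_{reg}$ still converge to the same limits. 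Hence $\mathcal N_\ell(X) = \mathcal N_\ell(\overline X)$.

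For non-emptiness of $\mathcal N_\ell(\overline X)$ (equivalently of $\mathcal N_\ell(X)$): fix a ray $\ell\subset\mathcal C$ and $v\in\ell$. By the tangent-cone equality and Curve Selection, choose a $C^1$ definable curve $\gamma:(0,\varepsilon)\to X$ with $\gamma(t)\to 0$ and $\gamma(t)/\|\gamma(t)\| \to v/\|v\|$. Since $X_{sing}$ is nowhere dense and definable of dimension $<d$, one can perturb $\gamma$ so that it lies in $X_{reg}$ (or argue that a generic nearby curve does, using that $\mathcal C = C_0 X_{reg}$, which again follows from dimension counting). Along $\gamma$, the tangent spaces $T_{\gamma(t)}X_{reg}$ form a definable curve in the compact Grassmannian $\mathbb G(d,n)$, hence have a limit $P$ as $t\to 0^+$ by o-minimality (a definable curve in a compact definable set extends continuously to the endpoint). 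Then $(v,P)$, suitably normalized to the ray $\ell$, lies in $\mathcal N_\ell$, so $\mathcal N_\ell\neq\emptyset$.

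The main obstacle I expect is the Nash-fiber equality $\mathcal N_\ell(X) = \mathcal N_\ell(\overline X)$: one must argue carefully that enlarging the regular locus from $X_{reg}$ to $(\overline X)_{reg}$ by a definable set of dimension $<d$ genuinely adds no new tangent limits and loses none. The inclusion $\mathcal N_\ell(X)\subset\mathcal N_\ell(\overline X)$ is easy (the defining data for $X$ is a subset of that for $\overline X$, modulo the $X_{sing}$ issue), but the reverse requires showing that any limiting tangent plane approached along $(\overline X)_{reg}$ can already be approached staying inside $X_{reg}$ — this is where one invokes that $(\overline X)_{reg}\setminus X$ has dimension $<d$ together with the openness of $X_{reg}$ inside the $d$-manifold $(\overline X)_{reg}$, so that the ``bad'' points form a relatively closed nowhere dense set and can be avoided when taking the approximating sequence.
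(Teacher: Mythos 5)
Your overall strategy matches the paper's: approximate points and tangent planes of $\overline X$ (or of $X$) by nearby regular points of $X\setminus X_{sing}$, using that $X\setminus X_{sing}$ is dense in $\overline X$ and that $\overline X\setminus X$ has lower dimension. The paper actually merges all three claims of the lemma into a single shift construction: given any $x^k\in\overline X$ with $x^k\to 0$ and $x^k/\|x^k\|\to v$, it produces $y^k\in X\setminus X_{sing}$ with $\|y^k-x^k\|<\|x^k\|/k$ and (when $x^k$ is a regular point of $\overline X$) $\angle(T_{y^k}X,T_{x^k}\overline X)<1/k$; from this one gets $C_0\overline X\subset C_0 X$, non-emptiness of $\mathcal N_\ell(X)$, and $\mathcal N_\ell(\overline X)\subset\mathcal N_\ell(X)$ all at once. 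Your modular approach (three separate steps, with Curve Selection) also works, so this is a stylistic rather than substantive difference.

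Two points in your proposal need repair. First, the assertion ``$X_{reg}\setminus(\overline X)_{reg}\subset X_{sing}$'' is false as written: $X_{reg}$ and $X_{sing}$ are complementary in $X$, so the inclusion would force $X_{reg}\subset(\overline X)_{reg}$, which fails in general (a point can be regular in $X$ yet be an accumulation point of $\overline X\setminus X$ sticking out of the local $d$-manifold). The correct statement, which is all you need and which you do gesture at later, is that $X_{reg}$ and $(\overline X)_{reg}$ coincide \emph{away from} $\overline{\,\overline X\setminus X\,}\cup\overline{X_{sing}}$, a closed definable set of dimension $<d$; off that set, $X$ and $\overline X$ agree locally and have the same tangent space. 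You should phrase the regular-locus comparison this way and not via the containment in $X_{sing}$.

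Second, for $C_0\overline X\subset C_0 X$, the phrase ``one shifts each $\bar x^k$ slightly inside $X$'' hides the key quantitative requirement: the displacement must be $o(\|\bar x^k\|)$, not merely $o(1)$, since otherwise $y^k/\|y^k\|$ need not converge to the same direction as $\bar x^k/\|\bar x^k\|$. This is exactly why the paper imposes $\|y^k-\bar x^k\|<\|\bar x^k\|/k$ and then carries out a short computation showing $y^k/\|y^k\|\to v$. Your Curve Selection route sidesteps this if you select the curve directly inside $X\setminus X_{sing}$ (rather than in $X$ and then ``perturbing'' into the regular locus, which is also hand-wavy as phrased): apply Lemma~\ref{CurveSelectionLemma} to the definable set $\{(x,t): x\in X\setminus X_{sing},\ t>0,\ \|tx-v\|<1/t\}$ or the like, so that the definable curve is already in the regular locus and its direction tends to $v$. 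Once these two points are tightened, the proof goes through.
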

\begin{proof} In view of Lemma~\ref{Lemma32}, we have $\dim C_0 X>0$. So $\dim C_0 \overline X>0$ as $C_0 X\subset C_0 \overline X$.
Let $\ell$ be a ray in $C_0\overline X$ and $x^k\in \overline X\setminus\{0\}$ be a sequence tending to $0$ such that $\displaystyle\frac{x^k}{\|x^k\|}\to v\in\ell$ as $k\to +\infty$. 
Since $\overline{X\setminus X_{sing}}=\overline X$ and since $X$ is of pure dimension $d>0$ at $0,$ for $k$ large enough, there is $y^k\in X\setminus X_{sing}$ such that 
\begin{equation}\label{xyk}\dim T_{y^k}X=d,\ \displaystyle\|y^k-x^k\|<\frac{\|x^k\|}{k},\ \text{ and }\ \angle(T_{y^k}X, T_{x^k}\overline X)<\frac{1}{k}\ \text{if}\ x^k\notin (\overline X)_{sing}.\end{equation}
Hence 
\begin{equation}\label{yk}\displaystyle\|y^k\|\leqslant\|x^k\|+\|y^k-x^k\|\leqslant\|x^k\|+\frac{\|x^k\|}{k}\to 0 \text { as } k\to +\infty.\end{equation}
On the other hand, 
$$\displaystyle\|y^k\|\geqslant\|x^k\|-\|y^k-x^k\|\geqslant\|x^k\|-\frac{\|x^k\|}{k}>\frac{\|x^k\|}{2}>0,$$
for $k$ large enough. 
Thus
$$\frac{\|y^k-x^k\|}{\|y^k\|}<\frac{\|x^k\|}{k\|y^k\|}<\frac{2}{k}\to 0\ \text{ as }\ k\to +\infty.$$
Therefore
$$\begin{array}{lll}
\displaystyle\left\|\frac{y^k}{\|y^k\|}-v\right\|&\leqslant&\displaystyle\left\|\frac{y^k-x^k}{\|y^k\|}\right\|+\left\|\frac{x^k}{\|y^k\|}-\frac{x^k}{\|x^k\|}\right\|+\left\|\frac{x^k}{\|x^k\|}-v\right\|\\
&<&\displaystyle\frac{2}{k}+\|x^k\|\left|\frac{1}{\|y^k\|}-\frac{1}{\|x^k\|}\right|+\left\|\frac{x^k}{\|x^k\|}-v\right\|\\
&=&\displaystyle\frac{2}{k}+\left|\frac{\|x^k\|-\|y^k\|}{\|y^k\|}\right|+\left\|\frac{x^k}{\|x^k\|}-v\right\|\\
&\leqslant&\displaystyle\frac{2}{k}+\frac{\|x^k-y^k\|}{\|y^k\|}+\left\|\frac{x^k}{\|x^k\|}-v\right\|<\frac{4}{k}+\left\|\frac{x^k}{\|x^k\|}-v\right\|\to 0 \text { as } k\to +\infty.
\end{array}$$
Combining this with~\eqref{yk} yields $v\in C_0 X$ and so $\ell\subset C_0 X.$
Hence $C_0 X=C_0 \overline X$.

Now taking a subsequence if necessary, we can suppose that the limit $\displaystyle\lim_{k\to +\infty}T_{y^k}X$ exists. 
Clearly, this limit belongs to $\mathcal N_\ell(X)$, i.e., $\mathcal N_\ell(X)\ne\emptyset$.

It remains to show that $\mathcal N_\ell(X)=\mathcal N_\ell(\overline X).$ 
For this, assume that 
$$x^k\notin (\overline X)_{sing},\ \dim_{x^k}\overline X=d\ \text{and the limit}\ P:=\lim_{k\to +\infty}T_{x^k}\overline X \ \text{exists},$$ 
so $P\in\mathcal N_\ell(\overline X).$ 
Then from~\eqref{xyk}, it follows that $P\in\mathcal N_\ell(X)$. 
Therefore $\mathcal N_\ell(\overline X)\subset\mathcal N_\ell(X)$ and so $\mathcal N_\ell(X)=\mathcal N_\ell(\overline X)$ as the inclusion $\mathcal N_\ell(X)\subset\mathcal N_\ell(\overline X)$ is clear. 
This ends the proof of the lemma.
\end{proof}

\begin{lemma}\label{Retraction} 
Suppose that $X$ is of pure dimension $d>0$ at the origin $0\in \R^n$.
Let $\ell$ be a ray in $\mathcal C$ and let $\ell_k\subset \mathcal C$ be a sequence of rays such that $\ell_k\to \ell$. Assume that, for each $k$, $P_k$ is a tangent limit to $X$ at $0$ along $\ell_k$ such that $P_k\to P$, then $P\in \mathcal N_\ell.$
\end{lemma}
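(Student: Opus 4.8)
The plan is to prove this closedness (semicontinuity) property of the fibration $\ell\mapsto\mathcal N_\ell$ by a single diagonal extraction, the point being that convergence of tangent planes in the definition of $\mathcal N_\ell$ is convergence for the metric $\angle(\cdot,\cdot)$ on a fixed Grassmannian, so the triangle inequality is available. First I would pass to unit directions: let $v$ be the unit vector spanning $\ell$ and, for each $k$, let $v_k$ be the unit vector spanning $\ell_k$, so that $\ell_k\to\ell$ reads $v_k\to v$. Since $X$ is of pure dimension $d$ at $0$, there is $\rho>0$ such that every regular point $y$ of $X$ in $\mathring{\B}^n_\rho$ satisfies $\dim T_yX=d$; discarding finitely many indices, I may then assume that all the tangent planes occurring below lie in the single Grassmannian $\mathbb G(d,n)$, on which $\angle$ is a metric. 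Finally I would unfold the assumption $P_k\in\mathcal N_{\ell_k}$: for each fixed $k$ there is a sequence $(x^{k,j})_j$ in $X_{reg}$ with $x^{k,j}\to 0$, $x^{k,j}/\|x^{k,j}\|\to v_k$, and $T_{x^{k,j}}X\to P_k$ as $j\to+\infty$ (normalizing the scaling parameter to $t_j=1/\|x^{k,j}\|$ in the definition of the Nash fiber along $\ell_k$).

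For the diagonal step, for each $k$ choose an index $j(k)$ large enough that the point $y^k:=x^{k,j(k)}\in X_{reg}$ satisfies simultaneously
\[
\|y^k\|<\frac1k,\qquad \left\|\frac{y^k}{\|y^k\|}-v_k\right\|<\frac1k,\qquad \angle\bigl(T_{y^k}X,\,P_k\bigr)<\frac1k .
\]
Then $y^k\to 0$, and the triangle inequalities give
\[
\left\|\frac{y^k}{\|y^k\|}-v\right\|<\frac1k+\|v_k-v\|\to 0,\qquad \angle\bigl(T_{y^k}X,\,P\bigr)<\frac1k+\angle(P_k,P)\to 0
\]
as $k\to+\infty$, using $v_k\to v$ and $P_k\to P$. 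Hence $(y^k)\subset X_{reg}$, $y^k\to 0$, $y^k/\|y^k\|\to v\in\ell$ and $T_{y^k}X\to P$, which is precisely the statement $P\in\mathcal N_\ell$.

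I do not expect a serious obstacle: the argument is essentially bookkeeping. The two steps needing a little care are arranging a common Grassmannian so that the metric $\angle$ and its triangle inequality genuinely apply — this is exactly where pure dimension of $X$ at $0$ is used — and the (harmless) remark that the sequences witnessing $P_k\in\mathcal N_{\ell_k}$ may be taken inside $X_{reg}$, which is immediate from the definition. No use of Curve Selection, Hardt triviality, or Morse--Sard is needed here.
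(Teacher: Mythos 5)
Your proof is correct and follows essentially the same line as the paper's: for each $k$ pick $y^k=x^{k,l_k}$ with $\|y^k\|$, $\|y^k/\|y^k\|-v_k\|$, and $\angle(T_{y^k}X,P_k)$ all below $1/k$, then conclude by triangle inequality using $v_k\to v$ and $P_k\to P$. The only cosmetic difference is that you spell out why all tangent planes live in a single Grassmannian $\mathbb G(d,n)$ (so the metric $\angle$ applies), whereas the paper packs this into the condition $\dim T_{x^{kl}}X=d$.
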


\begin{proof} By definition and by the assumption, for each $k$, there is a sequence $x^{kl}\in X\setminus X_{sing}$ such that 
$$x^{kl}\to 0,\ \displaystyle\frac{x^{kl}}{\|x^{kl}\|}\to v_k\in\ell_k,\ \dim T_{x^{kl}}X=d\ \text{ and }\ T_{x^{kl}}X\to P_k\in\mathcal N_{\ell_k}\ \text{ as }\ l\to +\infty.$$ 
We can find a subsequence $x^{kl_k}$ such that
$$\displaystyle\|x^{kl_k}\|<\frac{1}{k},\ \left\|\frac{x^{kl_k}}{\|x^{kl_k}\|}- v_k\right\|<\frac{1}{k}\ \text{ and }\ \angle(T_{x^{kl_k}}X,P_k)<\frac{1}{k}.$$ 
Since $\ell_k \to \ell$, it follows that $v^k\to v,$ where $v$ is the unit direction in $\ell$, as $k\to+\infty.$
Then it is clear that 
$$x^{kl_k}\to 0,\ \displaystyle\frac{x^{kl_k}}{\|x^{kl_k}\|}\to v\ \text{and}\ T_{x^{kl_k}}X\to P\ \text{as}\ k\to+\infty.$$ 
Hence $P\in \mathcal N_\ell.$
\end{proof}

The following lemma shows that any tangent limit along a ray always contains that ray.

\begin{lemma}[{see also \cite[Lemma~4]{OShea2004},~\cite[Theorem 11.8, Theorem 22.1]{Whitney1965-2}}]\label{ContainRay} 
Suppose that $X$ is of pure dimension $d>0$ at the origin $0\in \R^n$.
Let $P$ be a tangent limit along a ray $\ell\subset\mathcal C$ and $v\in\ell$ be the unit direction in $\ell$. 
Then there exists a $C^1$ definable curve $\gamma\colon(0,\varepsilon)\to X\setminus X_{sing}$ such that 
$$\displaystyle\|\gamma(t)\|=t\ \text{for}\ t\in(0,\varepsilon),\ \displaystyle\lim_{t\to 0^+}\frac{\gamma(t)}{\|\gamma(t)\|}=v\ \text{ and }\ \displaystyle\lim_{t\to 0^+}T_{\gamma(t)}X=P.$$ 
Moreover $\ell\subset P$.
\end{lemma}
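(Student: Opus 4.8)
The plan is to produce the curve first by a curve-selection argument in an auxiliary space, and then read off the properties. Consider the definable set
\[
Z := \{(x,P)\in (X\setminus X_{sing})\times \mathbb G(d,n)\colon \dim T_xX=d,\ P=T_xX\},
\]
together with the map $x\mapsto x/\|x\|$; since $P$ is a tangent limit along $\ell$, the point $(0,v,P)$ (suitably encoded) lies in the closure of the definable set $W:=\{(x,x/\|x\|,T_xX)\colon x\in X\setminus X_{sing},\ \dim T_xX=d,\ x\neq 0\}$ but not in $W$ itself. Applying the Curve Selection Lemma~\ref{CurveSelectionLemma} to $W$ at that boundary point gives a $C^1$ definable curve $\sigma\colon(0,\varepsilon')\to W$ with $\sigma(s)=(\tilde\gamma(s),\tilde\gamma(s)/\|\tilde\gamma(s)\|,T_{\tilde\gamma(s)}X)\to(0,v,P)$. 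In particular $\tilde\gamma(s)\to 0$, $\tilde\gamma(s)/\|\tilde\gamma(s)\|\to v$, and $T_{\tilde\gamma(s)}X\to P$ as $s\to 0^+$.

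Next I would reparametrize by the norm. The function $s\mapsto\|\tilde\gamma(s)\|$ is a $C^1$ definable function on $(0,\varepsilon')$ which is positive and tends to $0$; by o-minimality (monotonicity/cell decomposition) it is strictly monotone on a smaller interval, hence after shrinking $\varepsilon'$ it is a $C^1$ definable diffeomorphism onto an interval $(0,\varepsilon)$. Composing $\tilde\gamma$ with its inverse yields a $C^1$ definable curve $\gamma\colon(0,\varepsilon)\to X\setminus X_{sing}$ with $\|\gamma(t)\|=t$, and the two limits $\gamma(t)/\|\gamma(t)\|\to v$ and $T_{\gamma(t)}X\to P$ are preserved under reparametrization. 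This gives the first three assertions.

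For the last assertion, $\ell\subset P$, the idea is that $P=\lim_{t\to0^+}T_{\gamma(t)}X$ and the chord direction $\gamma(t)/\|\gamma(t)\|=\gamma(t)/t$ converges to $v$, while the secant directions are ``almost tangent'': differentiating, one has $\gamma'(t)\in T_{\gamma(t)}X$ for all $t$, and since $\|\gamma(t)\|=t$ one shows $\gamma(t)/t$ and $\gamma'(t)$ have the same limit $v$. Concretely, write $\gamma(t)=t\,u(t)$ with $u(t)=\gamma(t)/t\in\mathbb S^{n-1}$; then $\gamma'(t)=u(t)+t\,u'(t)$, and because $u(t)\in\mathbb S^{n-1}$ we have $\langle u(t),u'(t)\rangle=0$, so $\langle\gamma'(t),u(t)\rangle=1$, which in particular keeps $\gamma'(t)$ bounded away from $0$; an o-minimal argument (the limit of $\gamma'(t)$ exists by monotonicity applied coordinatewise, and $t\,u'(t)\to 0$ since $u'(t)$ grows at most polynomially while $t\to 0$) gives $\gamma'(t)\to v$. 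Since $\gamma'(t)\in T_{\gamma(t)}X$ and $T_{\gamma(t)}X\to P$ in the Grassmannian, taking limits gives $v\in P$, hence $\ell=\mathbb R_+v\subset P$ as $P$ is a linear subspace.

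The main obstacle is the limit step $\gamma'(t)\to v$: one must rule out oscillation of $\gamma'$ and show the ``remainder'' $t\,u'(t)$ vanishes. This is where o-minimality is essential — each coordinate of $\gamma'$ is a definable function of $t$, hence eventually monotone, so the limit $w:=\lim_{t\to0^+}\gamma'(t)$ exists; then $\gamma(t)/t=\frac1t\int_0^t\gamma'(s)\,ds\to w$ by the mean value / l'Hôpital argument, forcing $w=v$. (One should double-check integrability of $\gamma'$ near $0$, which follows since $\|\gamma(t)\|=t$ bounds the total variation; alternatively invoke that $\gamma$ extends continuously to $0$ with $\gamma(0)=0$ and apply the o-minimal l'Hôpital rule directly to $\lim_{t\to0^+}\gamma(t)/t$.)
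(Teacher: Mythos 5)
Your proof is correct and follows essentially the same route as the paper: apply the Curve Selection Lemma to the auxiliary definable set whose points record $(x, x/\|x\|, T_xX)$, reparametrize the resulting curve by $\|\gamma\|$, and then conclude $v=\lim\gamma(t)/t=\lim\gamma'(t)\in P$ by l'H\^opital together with o-minimal monotonicity. One caveat: the side remark that $u'(t)$ ``grows at most polynomially'' is not valid in an arbitrary o-minimal structure (definable germs need not be polynomially bounded), so you should rely on the clean l'H\^opital argument you give as the alternative --- which is exactly what the paper does, and which also rules out $\lim\gamma'(t)=\pm\infty$ since that would force $\gamma(t)/t$ to diverge.
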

\begin{proof}
In view of Lemma~\ref{CurveSelectionLemma}, there is a $C^1$ definable curve $x\colon(0,\epsilon)\to X\setminus X_{sing}$ such that 
$$\displaystyle\lim_{r\to 0^+}x(r)=0,\ \displaystyle\lim_{r\to 0^+}\frac{x(r)}{\|x(r)\|}=v\ \text{ and }\ \displaystyle\lim_{r\to 0^+}T_{x(r)}X=P.$$ 
Let $t(r):=\|x(r)\|$ which is a definable function. 
By shrinking $\epsilon$, we may assume that $t(r)$ is of class $C^1$ and strictly increasing on $(0,\epsilon)$; moreover, the inverse function $r(t)$ is also of class $C^1$. 
Set $\gamma(t):=\displaystyle{x(r(t))}$. 
Then clearly 
$$\|\gamma(t)\|=\|x(r(t))\|=t(r(t))=t.$$
It is not hard to check that $\displaystyle\lim_{t\to 0^+}\frac{\gamma(t)}{\|\gamma(t)\|}=v$ and $\displaystyle\lim_{t\to 0^+}T_{\gamma(t)}X=P.$ Hence the first statement follows.

Now for all $t$, the point $\gamma(t) \in X$ so $\gamma'(t)\in T_{\gamma(t)}X$. 
Hence, when $t$ tends to $0^+$, L'Hospital's Rule gives
$$v = \lim_{t\to 0^+}\frac{\gamma(t)}{\|\gamma(t)\|} = \lim_{t\to 0^+}\frac{\gamma(t)}{t}=\lim_{t\to 0^+}\gamma'(t) \in \lim_{t\to 0^+} T_{\gamma(t)}X = P.$$ 
Therefore $\ell\subset P$. This ends the proof of the lemma. 
\end{proof}

\section{Proofs of the main results}\label{Proofs}

\begin{proof}[Proof of Theorem~\ref{Codim1}] 
(i) Observe that the second statement follows from the first one and Lemma~\ref{Lemma32}, so it remains to prove the first statement. Consider the definable function 
$$\rho\colon X\setminus X_{sing}\to[0,+\infty),\quad x\mapsto \|x\|.$$
Observe that the set of critical values of $\rho$ is finite in view of Theorem~\ref{MorseSard}. Thus there exists a constant $R > 0$ such that $\rho$ has no critical values in the interval $(0,R)$. 
Fix $\varepsilon\in(0,R)$ and let 
$$B := \{(u, r)\in\mathbb S^{n-1}\times (0,\varepsilon):\ r u\in X\setminus X_{sing}\}.$$ 
Clearly $B$ is definable and nonempty. Moreover, $B$ is non singular as it is the inverse image of the non singular set $(X\setminus X_{sing})\cap \mathring{\mathbb B}^n_{\varepsilon}$ by the diffeomorphism 
$$\phi \colon \mathbb S^{n-1}\times \mathbb (0,+\infty)\to\mathbb R^n\setminus\{0\},\quad (u, r)\mapsto r u.$$
Let $D:=(\mathcal C\setminus \mathcal C_{sing})\cap\mathbb S^{n-1}.$ 
Clearly $D$ is a non singular definable set. 
By Lemma~\ref{NotEmpty}, $\dim\mathcal C>0$, so $D$ is nonempty.
Set $\widetilde{B} := D\times \{0\}$ and consider the definable function 
$$f \colon \widetilde{B} \cup B \to \mathbb{R}, \quad (u, r)\mapsto r.$$ 
Note that $\widetilde{B} \subset \overline{B} \setminus B.$ Furthermore, we have
$$\mathrm{rank} (f|_{\widetilde{B}}) = 0 \quad \textrm{ and } \quad \mathrm{rank} (f|_{{B}}) = 1.$$
Indeed, the first equation is clear. To prove the second one, suppose for contradiction that $d_{(u, r)}f = 0$ for some $(u, r)\in B.$ 
We have $\|r u\|=r<\varepsilon < R$ and so $r u$ is not a critical point of $\rho.$ 
Moreover, 
$$T_{(u, r)}B = \ker d_{(u, r)}f \subset T_u \mathbb S^{n-1}\times \{0\}.$$
Therefore 
$$T_{r u}(X\setminus X_{sing})=d_{(u, r)}\phi (T_{(u, r)}B)\subset d_{(u, r)} \phi (T_u \mathbb S^{n-1}\times\{0\}) = T_{r u}\mathbb S^{n-1}_{r},$$ 
and so $r u$ is a critical point of $\rho,$ which is a contradiction. 

Let 
\begin{equation}\label {Z}Z:=\left\{\begin{array}{lll}v\in D:& \text{the pair } (B, \widetilde{B}) \text{ does not satisfy}\\ 
&\text{the Thom's } a_f \text{ condition at }(v,0)
\end{array}\right\}.\end{equation} 
Recall that the pair $(B, \widetilde{B})$ satisfies the {\em Thom's $a_f$ condition} at $(v, 0) \in \widetilde{B}$ if and only if for any sequence $w^k=(u^k,r_k)\in B$ converging to $(v, 0) \in \widetilde {B},$ we have 
$$\angle (T_{w^k}f^{-1}(r_k),T_v D\times \{0\})\to 0 \ \textrm{ as } \ k\to+\infty.$$
Let $D_1,\dots,D_q$ be the connected components of $D$.  
By~\cite[Lemma~2]{Ta1997} (see also \cite[Lemma~15]{Bekka1993} and \cite[Lemma 1.2]{Kurdyka1994}), we have $\dim (Z\cap D_i)<\dim D_i$ for each $i=1,\dots,q$. Hence $Z\cap D_i$ is nowhere dense in $D_i$, and so it is not hard to check that $Z$ is nowhere dense in $D$.

Now to prove (i), it is enough to show that $\mathcal E'\cap\mathbb S^{n-1}\subset Z,$ or, equivalently, 
$$D\setminus Z\subset D\setminus \mathcal E'.$$
To do this, take arbitrarily $v\in D\setminus Z$. We need to show that $v\not\in \mathcal E'.$ In fact, let $P$
be an arbitrary tangent limit to $X$ at $0$ along $\ell:=\mathbb R_{+}v.$ By definition, there is a sequence $x^k\in X\setminus X_{sing}$ such that 
$$x^k\to 0,\ \displaystyle\frac{x^k}{\|x^k\|}\to v\ \textrm{ and } \  T_{x^k}X\to P \ \textrm{ as } \ k \to +\infty.$$
Set $r_k:={\|x^k\|}$ and $u^k:=\displaystyle\frac{x^k}{\|x^k\|}$. 
Then the sequence $w^k := (u^k,r_k) \in B$ tends to $(v,0) \in \widetilde{B}.$ 
Since $v \not \in (Z\cup\mathcal C_{sing}),$ we have
$$\angle (T_{w^k}(f^{-1}(r_k)), T_v D\times\{0\}) \to 0 \ \textrm{ as } \ k \to +\infty.$$
Equivalently,
$$\angle\left(T_{w^k}\left[\left(\frac{1}{r_k}(X\setminus X_{sing})\cap\mathbb S^{n-1}\right)\times\{r_k\}\right], T_v D\times\{0\}\right) \to 0  \ \textrm{ as } \ k \to +\infty,$$
where $$\frac{1}{r_k}(X\setminus X_{sing})=\left\{\frac{x}{r_k}:\ x\in X\setminus X_{sing}\right\}.$$
Consequently,
\begin{equation}\label{PEqn2}
\angle\left(T_{u^k}\left[\frac{1}{r_k}(X\setminus X_{sing})\cap\mathbb S^{n-1}\right], T_v D \right) \to 0  \ \textrm{as } \ k \to +\infty.
\end{equation}

On the other hand, $\|x^k\| < R$ for all large $k.$ For all such $k,$ $x^k$ is a regular point of the function $\rho$ and so it is a non singular point of $X\cap\mathbb S^{n-1}_{{r_k}}.$ Combining this with the assumption that $X$ is of pure dimension $d$ at $0$, Lemma~\ref{Lemma31}(iv) and Lemma~\ref{Lemma32} yields
\begin{eqnarray*}\label{e-3}
\dim_{x^k}(X\cap\mathbb S^{n-1}_{{r_k}}) =  \dim_{x^k} X - 1=  \dim_0 X - 1 & \geqslant & \dim \mathcal C - 1 = \dim D \\
&\geqslant& \dim_v D=\dim T_vD.
\end{eqnarray*}
Then, by definition, we obtain
\begin{eqnarray*}
\angle(T_{x^k}X, T_v D)&=& \sup\{\widehat {z, \pi_{T_{x^k}X}(z)}: z\in T_vD\setminus\{0\}\}\\ 
&\leqslant & \sup\{\widehat {z,\pi_{T_{x^k}(X\cap\mathbb S^{n-1}_{{r_k}})}(z)}: z\in T_vD\setminus\{0\}\}\\ 
&= & \angle(T_{x^k}(X\cap\mathbb S^{n-1}_{{r_k}}), T_v D)\\
&= & \angle(T_{x^k}((X\setminus X_{sing})\cap\mathbb S^{n-1}_{{r_k}}), T_v D),
\end{eqnarray*}
where $\pi_{T_{x^k}X}$ and $\pi_{T_{x^k}(X\cap\mathbb S^{n-1}_{{r_k}})}$ are the orthogonal projections on ${T_{x^k}X}$ and $T_{x^k}(X\cap\mathbb S^{n-1}_{{r_k}})$ respectively. 
Observe that the homothety
$$(X\setminus X_{sing})\cap\mathbb S^{n-1}_{{r_k}} \to \frac{1}{r_k}(X\setminus X_{sing})\cap\mathbb S^{n-1}, \quad x \mapsto \frac{x}{r_k},$$ 
is a linear isomorphism. 
Thus, $T_{x^k}\big[(X\setminus X_{sing})\cap\mathbb S^{n-1}_{{r_k}}\big]$ and $T_{u^k}\big[\frac{1}{r_k}(X\setminus X_{sing})\cap\mathbb S^{n-1}\big]$ determine the same plane in the Grassmannian $\mathbb G(d-1,n)$. Therefore
\begin{eqnarray*}
\angle(T_{x^k}X, T_v D) & \leqslant & \angle \left (T_{u^k} \left[\frac{1}{r_k}(X\setminus X_{sing})\cap\mathbb S^{n-1}\right], T_v D \right).
\end{eqnarray*}
This, together with \eqref{PEqn2}, gives us $\angle(P,T_v D)=0.$ Note that $T_v \mathcal C =  T_v D\oplus \mathbb Rv$ (in light of Lemma~\ref{Lemma31}(iii)) and
$v \in P$ (by Lemma~\ref{ContainRay}). Hence, $\angle(P, T_v \mathcal C)=0.$ This yields $v\not\in \mathcal E'$ and so Item (i) follows. 

Before proving Item (ii), we need the following lemma.

\begin{lemma}\label{ContainPlane}
Let $X\subset\mathbb R^n$ be a definable set of pure dimension $d>0$ at the origin $0\in \R^n.$ 
For each ray $\ell\subset \mathcal C\setminus \mathcal C_{sing}$, there exists at least one tangent limit along $\ell$ containing $T_v\mathcal C$ for some $v\in\ell.$
\end{lemma}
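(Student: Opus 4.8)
I want to produce, for a fixed non-singular ray $\ell \subset \mathcal C$ with unit direction $v$, a $C^1$ definable curve in $X \setminus X_{sing}$ approaching the origin along $\ell$ whose tangent spaces converge to a plane $P$ that contains the whole tangent space $T_v\mathcal C$ (not merely the ray $\ell$, which Lemma~\ref{ContainRay} already gives for free). The natural idea is to approximate along the cone itself: since $v \in \mathcal C \setminus \mathcal C_{sing}$, by Lemma~\ref{Lemma31}(iii) the cone $\mathcal C$ is a $C^1$ manifold of dimension $\dim_v\mathcal C$ near $v$, with $T_{tv}\mathcal C = T_vD \oplus \R v$ for all $t>0$, where $D = (\mathcal C \setminus \mathcal C_{sing}) \cap \mathbb S^{n-1}$. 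So I would like to build, for each point $c$ of $\mathcal C$ in a small neighborhood of $v$, a corresponding point $x(c) \in X \setminus X_{sing}$ close to $c$ with $T_{x(c)}X$ close to $T_c\mathcal C$, and then restrict this "lifting" to the curve $t \mapsto tv$, $t \to 0^+$.

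\medskip

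\textbf{Key steps.} First I would realize $\mathcal C$ as the tangent cone and use the standard blow-up construction (as in the proof of Lemma~\ref{Lemma32}): consider $A := \{(x/t, t) : x \in X \cap \mathring{\mathbb B}^n_r\}$, whose closure meets $\R^n \times \{0\}$ exactly in $\mathcal C \times \{0\}$. By Lemma~\ref{NotEmpty} we may work with $\overline X$ and assume $X$ closed. The key point is that near a regular point $(v,0)$ of $\mathcal C \times \{0\}$ the set $\overline A$ is, after a generic linear change of coordinates, close to a graph over $T_v\mathcal C \times \R$; more precisely, I would take a $(\dim_v\mathcal C + 1)$-dimensional stratum $S$ of $\overline A$ whose closure contains $(v,0)$, show (using that $\dim \mathcal C = \dim D + 1 = \dim_0 X$ forces $\dim_0 X = \dim_v \mathcal C$ along such a ray, or handling the general case via Lemma~\ref{Retraction}) that its tangent space at $(v,0)$ is $T_v\mathcal C \times \R$. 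Second, I would apply the Curve Selection Lemma inside this stratum $S$ to a curve approaching $(v,0)$ whose first-coordinate direction is $v$ — equivalently, pull back via $\phi(u,r) = ru$ to get a curve $\gamma(t)$ in $X \setminus X_{sing}$ with $\|\gamma(t)\| = t$, $\gamma(t)/t \to v$, and $T_{\gamma(t)}X$ converging to some $P \in \mathcal N_\ell$. Third — and this is the crux — I would show that the limit $P$ must contain $T_v\mathcal C$. For this I would argue on the sphere as in the proof of Item (i): the points $\gamma(t)/t$ lie in $(\mathcal C/\|\cdot\|$-normalized$)$ approximations, and the tangent spaces $T_{\gamma(t)}\big((X\setminus X_{sing}) \cap \mathbb S^{n-1}_t\big)$, rescaled, limit onto a plane containing $T_v D$; combining with $v \in P$ (Lemma~\ref{ContainRay}) and $T_v\mathcal C = T_vD \oplus \R v$ gives $T_v\mathcal C \subset P$.

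\medskip

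\textbf{Main obstacle.} The delicate point is the third step: producing a curve for which the limit plane genuinely absorbs all of $T_v\mathcal C$ and not just a proper subspace. A cheap route that avoids the blow-up bookkeeping is to choose, for a generic $(d-1)$-dimensional direction $W \subset T_vD$, a sequence $c^k = t_k v \to 0$ along $\ell$ and then, exploiting that $\mathcal C$ is the tangent cone, points $x^k \in X \setminus X_{sing}$ with $x^k/\|x^k\|$ converging to $v$ \emph{tangentially to $D$ in the direction $W$} fast enough that the rescaled secant directions fill out $W$; one then needs $\dim T_{x^k}X = d$ and a dimension count forcing $T_{x^k}X$ (after passing to a limit) to contain both $v$ and $W$, hence all of $T_v\mathcal C$. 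Making the "fill out $W$" step precise — i.e. ensuring the approximating sequence can be chosen to approach along a genuinely $(d-1)$-parameter family of directions in $D$ and that Lemma~\ref{Retraction} lets us pass to the limiting ray $\ell$ — is where the real work lies; I expect to invoke Hardt's triviality theorem (Theorem~\ref{HardtTheorem}) applied to $\rho$ to get a definable homeomorphism of $(X\setminus X_{sing}) \cap \mathbb S^{n-1}_r$ with $D$ for small $r$, transporting a chart of $D$ near $v$ to charts of the slices $X \cap \mathbb S^{n-1}_r$, and then take the curve $\gamma(t)$ to trace the image of the point $v$ under these homeomorphisms, whose tangent spaces then converge to a plane containing $T_vD \oplus \R v = T_v\mathcal C$.
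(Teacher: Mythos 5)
Your proposal takes a genuinely different route from the paper's, and it contains gaps I do not think can be repaired along the lines you sketch.

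The paper's argument is much softer and does not construct anything. It first observes that, by the definition of $\mathcal E'$ in \eqref{E'}, if $\ell\not\subset\mathcal E'$ then \emph{every} $P\in\mathcal N_\ell$ already contains $T_v\mathcal C$, and $\mathcal N_\ell\neq\emptyset$ by Lemma~\ref{NotEmpty}; so only $\ell\subset\mathcal E'$ needs attention, and the isolated-ray case is immediate from Lemma~\ref{ContainRay} since then $T_v\mathcal C$ is just the line through $\ell$. For the remaining case, the paper uses that this lemma sits inside the proof of Theorem~\ref{Codim1} \emph{after} Item~(i) has already been established: $\mathcal E'\cap\mathbb S^{n-1}$ is nowhere dense in $\mathcal C\cap\mathbb S^{n-1}$. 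One can therefore approximate $\ell$ by rays $\ell_k\subset\mathcal C\setminus(\mathcal C_{sing}\cup\mathcal E')$, for which by definition of $\mathcal E'$ any tangent limit $P_k\in\mathcal N_{\ell_k}$ satisfies $T_{v^k}\mathcal C\subseteq P_k$, and then pass to the limit via Lemma~\ref{Retraction}. No blow-up, no stratification, no trivialization. The entire hard analytic content has been front-loaded into the Thom $a_f$ argument of Item~(i), which your proposal does not exploit.

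Your construction, by contrast, has two unsupported steps. First, you assert that near $(v,0)$ there is a $(\dim_v\mathcal C+1)$-dimensional stratum $S$ of $\overline A$ whose tangent space at $(v,0)$ is $T_v\mathcal C\times\R$, but no reason is given why the tangent cone (let alone the tangent space of a chosen stratum) of $\overline A$ at $(v,0)$ should be $T_v\mathcal C\times\R$; this is, in disguise, precisely the content of Item~(i). Your intermediate dimension claim is also wrong in general: Lemma~\ref{Lemma32} only gives $\dim\mathcal C\leqslant\dim_0X$, the inequality can be strict (e.g.\ for the cusp $x^2+y^2=z^3$ the cone is a ray), and the lemma must cover $\dim_v\mathcal C<d$. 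Second, and more fatally, the fallback via Theorem~\ref{HardtTheorem} cannot deliver what you need: Hardt's theorem produces definable \emph{homeomorphisms}, not $C^1$ diffeomorphisms, so tracing the point $v$ through the fiberwise trivialization gives a curve in $X\setminus X_{sing}$ whose tangent spaces have no guaranteed relation to $T_vD$. Without $C^1$ control there is no way to conclude that $\lim_t T_{\gamma(t)}X$ contains $T_vD$. You correctly flag this as ``where the real work lies''; that work is exactly the $a_f$-regularity argument the paper has already done, and the clean path is to quote the nowhere density of $\mathcal E'$ rather than redo the blow-up analysis.
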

\begin{proof}
In view of Lemma~\ref{NotEmpty}, we have $\mathcal N_\ell\ne\emptyset.$ 
From the definition of $\mathcal E'$, it is enough to prove the statement for any ray $\ell\subset \mathcal E'$. 

First of all, consider the case when $\ell$ is an isolated ray in $\mathcal C.$ 
In view of Lemma~\ref{ContainRay}, we have $\ell\subset P$ for any $P\in\mathcal N_\ell.$
Observe that $T_v \mathcal C$ is the line containing $\ell$. 
Hence $T_v \mathcal C\subset P$ for any $P\in\mathcal N_\ell.$

Now assume that $\ell$ is not an isolated ray in $\mathcal C$. 
As $\mathcal E'$ is nowhere dense in $\mathcal C$ by Item (i), it holds that $\mathcal C_{sing}\cup \mathcal E'$ is nowhere dense in $\mathcal C.$
Hence there exists a sequence of rays $\ell_k\subset \mathcal C\setminus (\mathcal C_{sing}\cup \mathcal E')$ converging to $\ell.$ For each $k$, let $v^k$ be a direction in $\ell_k$ such that the sequence $v^k$ tends to $v$ and let $P_k$ be a tangent limit of $X$ along $\ell_k$.  
Then $T_{v^k}\mathcal C\subseteq P_k$ by definition. 
Taking a subsequence if necessary, we may suppose that $P_k$ converges to $P.$ By Lemma~\ref{Retraction}, $P\in\mathcal N_\ell$. Since $T_{v^k}\mathcal C\to T_{v}\mathcal C$, we deduce that $T_{v}\mathcal C\subseteq P$. The lemma is proved.
\end{proof}

(ii) Let $\ell$ be a ray in $\mathcal C$ and let $v\in\ell$. Suppose that $\dim_v \mathcal C=d$.

Let us prove the first statement. Assume first that $\ell\subset \mathcal E\setminus \mathcal C_{sing}$. Then $\ell$ is not a ray in $\mathcal C'$ and by Lemma~\ref{ContainPlane}, there is $P\in\mathcal N_\ell$ containing $T_{v}\mathcal C$. 
By the assumption, $\dim_v \mathcal C=\dim P$. Hence $P= T_{v}\mathcal C.$ As $\#(\mathcal N_\ell)>1,$ there is $P'\in \mathcal N_\ell$ such that $P'\ne P.$ 
Hence $T_{v}\mathcal C\not\subset P',$ i.e., $\ell\subset\mathcal E'.$
Now suppose that  $\ell\subset \mathcal E'\setminus \mathcal C',$ so $\ell$ is not a ray in $\mathcal C_{sing}$ and there is $P\in\mathcal N_\ell$ not containing $T_{v}\mathcal C$. 
On the other hand, by Lemma~\ref{ContainPlane}, there is $P'\in\mathcal N_\ell$ such that $T_{v}\mathcal C\subset P'.$
Clearly $P\ne P'$, so $\ell\subset\mathcal E.$ The first statement follows. In fact, we have proved that
\begin{equation}\label{EE'}((\mathcal E\setminus \mathcal C_{sing})\cap \{u\in \mathcal C:\ \dim_u\mathcal C=d\})=((\mathcal E'\setminus \mathcal C')\cap \{u\in \mathcal C:\ \dim_u\mathcal C=d\})\end{equation}

Suppose that $\ell\subset \mathcal C\setminus (\mathcal E\cup\mathcal C_{sing}\cup \mathcal C')$. 
Then $\mathcal N_\ell$ contains only one element, say $P.$ In view of Lemma~\ref{ContainPlane},  we must have $T_{v}\mathcal C\subseteq P.$ 
As $\dim_v \mathcal C=\dim P$ by the assumption, we get $T_{v}\mathcal C= P,$ i.e., $\mathcal N_\ell=\{T_v\mathcal C\}$. 

Now it remains to show that $\dim \mathcal E< d.$ 
We have
$$\begin{array}{lll}
\mathcal E
&=&(\mathcal E\cap \{u\in \mathcal C:\ \dim_u\mathcal C=d\})\cup(\mathcal E\cap \{u\in \mathcal C:\ \dim_u\mathcal C<d\})\\
&=&((\mathcal E\setminus \mathcal C_{sing})\cap \{u\in \mathcal C:\ \dim_u\mathcal C=d\})\cup((\mathcal E\cap\mathcal C_{sing}\cap \{u\in \mathcal C:\ \dim_u\mathcal C=d\})\\
&&\cup (\mathcal E\cap \{u\in \mathcal C:\ \dim_u\mathcal C<d\})\\
&\subset&((\mathcal E\setminus \mathcal C_{sing})\cap \{u\in \mathcal C:\ \dim_u\mathcal C=d\})\cup \mathcal C_{sing}\cup (\mathcal E\cap \{u\in \mathcal C:\ \dim_u\mathcal C<d\})\\
&=&((\mathcal E'\setminus \mathcal C')\cap \{u\in \mathcal C:\ \dim_u\mathcal C=d\})\cup \mathcal C_{sing}\cup (\mathcal E\cap \{u\in \mathcal C:\ \dim_u\mathcal C<d\})\\
&\subset&\mathcal E'\cup \mathcal C_{sing}\cup (\mathcal E\cap \{u\in \mathcal C:\ \dim_u\mathcal C<d\}),
\end{array}$$
where the last equality follows from~\eqref{EE'}. 
Observe that $\dim \mathcal E'<d$ by Item (i) and the dimension of the third set is clearly smaller than $d.$ Moreover, in view of Lemma~\ref{Lemma32}, we get $\dim \mathcal C_{sing}<\dim \mathcal C\leqslant\dim_0 X=d.$ Hence $\dim \mathcal E<d.$ This ends the proof of the theorem.
\end{proof}
\begin{remark}{\rm
It is not hard to check that $\mathcal E'\cap \mathbb S^{n-1}=Z$ where $Z$ is defined by~\eqref{Z}.
}\end{remark}

By Lemma~\ref{Retraction} and Lemma~\ref{ContainPlane}, the following corollary is straightforward.
\begin{corollary}\label{TP}
Let $X\subset\mathbb R^n$ be a definable set of pure dimension $d>0$ at the origin $0\in \R^n.$ 
Let $\ell$ be a ray in $\mathcal C$ and let $\ell_k\subset \mathcal C\setminus \mathcal C_{sing}$ be a sequence of rays such that $\ell_k\to \ell$. Assume that $T_{v^k}\mathcal C\to \mathcal P$ for $v^k\in\ell_k$, then there exists $P\in \mathcal N_\ell$ such that $\mathcal P\subset P.$
\end{corollary}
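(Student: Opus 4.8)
The plan is to feed Lemma~\ref{ContainPlane} into Lemma~\ref{Retraction}: along each ray $\ell_k$ the former produces a tangent limit that already contains the tangent plane of $\mathcal C$ along $\ell_k$, and the latter transports such tangent limits to the limiting ray $\ell$.

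First I would record that, since $\ell_k\subset\mathcal C\setminus\mathcal C_{sing}$, the point $v^k$ is a regular point of $\mathcal C$ and, by Lemma~\ref{Lemma31}(iii) and the remark following it, the plane $T_{v^k}\mathcal C$ depends only on the ray $\ell_k$ and not on the chosen $v^k\in\ell_k$; in particular the hypothesis $T_{v^k}\mathcal C\to\mathcal P$ is unambiguous and $\mathcal P$ is a linear subspace of $\mathbb R^n$, of dimension at most $\dim\mathcal C\le d$. Applying Lemma~\ref{ContainPlane} to the ray $\ell_k$ then gives a tangent limit $P_k\in\mathcal N_{\ell_k}$ with $T_{v^k}\mathcal C\subset P_k$. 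Because $X$ has pure dimension $d$ at $0$, every element of a Nash fiber of $X$ is a $d$-plane, so $P_k\in\mathbb G(d,n)$.

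Next, using compactness of $\mathbb G(d,n)$ I pass to a subsequence along which $P_k\to P$ for some $P\in\mathbb G(d,n)$. Lemma~\ref{Retraction}, applied to the rays $\ell_k\to\ell$ in $\mathcal C$ and to the tangent limits $P_k\to P$, yields $P\in\mathcal N_\ell$. Finally I let the inclusions $T_{v^k}\mathcal C\subset P_k$ pass to the limit: given $w\in\mathcal P$ of norm one, I pick $w^k\in T_{v^k}\mathcal C$ with $w^k\to w$ (possible since $T_{v^k}\mathcal C\cap\mathbb S^{n-1}\to\mathcal P\cap\mathbb S^{n-1}$ in Hausdorff distance), and then $w^k\in P_k$ together with $P_k\to P$ forces $w\in P$ (planes being closed); hence $\mathcal P\subset P$, which is the assertion.

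There is no real obstacle here — the paper rightly calls the statement straightforward. The only points that need a line of care are the bookkeeping of the ambient Grassmannian (note $\mathcal N_\ell\subset\mathbb G(d,n)$ while a priori $\dim\mathcal P$ may be strictly smaller than $d$, which is harmless for the inclusion $\mathcal P\subset P$) and the observation that $T_{v^k}\mathcal C$, hence the hypothesis $T_{v^k}\mathcal C\to\mathcal P$, does not depend on the choice of $v^k$ inside $\ell_k$. Everything else is the routine fact that subspace inclusion is stable under Hausdorff limits.
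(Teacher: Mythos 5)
Your proposal is correct and follows exactly the route the paper intends: the paper derives this corollary directly from Lemma~\ref{ContainPlane} (giving $P_k\in\mathcal N_{\ell_k}$ with $T_{v^k}\mathcal C\subset P_k$) and Lemma~\ref{Retraction} (passing to a convergent subsequence $P_k\to P\in\mathcal N_\ell$), with the inclusion $\mathcal P\subset P$ surviving the limit. Your additional remarks on the independence of $T_{v^k}\mathcal C$ from the choice of $v^k\in\ell_k$ and on the Grassmannian bookkeeping are accurate but do not change the argument.
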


Before proving Theorem~\ref{Connected}, we need some results of preparation. 
For each $x \in \mathbb{R}^n \setminus\{0\},$ let $\ell_x$ denote the open ray emanating from the origin through $x,$ namely, $\ell_x:=\{rx:\ r>0\}$. Given a real number $\delta \in [0,1]$ and a ray $\ell$, let $N_\delta(\ell)$ denote the {\em $\delta$-conical neighborhood} of $\ell,$ i.e.,   
\begin{equation*}\label{Nl}
N_\delta(\ell):=\{x \in \mathbb{R}^n\setminus\{0\}:\ \widehat{\ell,\ell_x}\leqslant\frac{\pi}{2}\ \text{and}\ \sin\widehat{\ell,\ell_x}\leqslant\delta\},
\end{equation*}
recall that $\widehat{\ell, \ell_x}$ denotes the angle between $\ell$ and $\ell_x.$ 

\begin{lemma}\label{Stable} Assume that $X$ is a definable set of positive pure dimension at $0$. Let $\ell$ be a ray in $\mathcal C$. Then there exists a constant $\delta_0>0$ and a continuous definable function $h\colon (0,\delta_0)\to\mathbb R$ with $h(\delta)\in (0,\delta]$ such that for $\delta\in(0,\delta_0)$, we have:
\begin{enumerate}[{\rm (i)}] 
\item The number of connected components of $(X\setminus\{0\})\cap N_\delta(\ell)\cap\mathring{\mathbb B}_{h(\delta)}^{n}$ is constant. 
\item For each connected component $Y$ of $(X\setminus\{0\})\cap N_\delta(\ell)\cap\mathring{\mathbb B}_{{h(\delta)}}^{n}$, the intersection $Y\cap N_{\delta'}(\ell)\cap\mathbb S_{r'}^{n-1}$ is nonempty and connected for $\delta'\in(0,\delta]$ and $r'\in(0, h(\delta'))$. 
\item If $\ell$ is not in $\mathcal C'$, then $((\overline X)_{sing}\setminus\{0\})\cap N_\delta(\ell)\cap\mathring{\mathbb B}_{h(\delta)}^{n}=\emptyset$.
\item The Nash fiber of $X$ along $\ell$ is equal to the Nash fiber of $(X\setminus\{0\})\cap N_\delta(\ell)\cap\mathring{\mathbb B}_{{h(\delta)}}^{n}$ along $\ell,$ i.e.,
$$\mathcal N_\ell(X)=\mathcal N_\ell((X\setminus\{0\})\cap N_\delta(\ell)\cap\mathring{\mathbb B}_{{h(\delta)}}^{n}).$$
\end{enumerate}
\end{lemma}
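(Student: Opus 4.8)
The plan is to deduce all four items, essentially, from one application of Hardt's triviality theorem (Theorem~\ref{HardtTheorem}) to a suitable definable family, and to read off the conclusions from the resulting product structure. Let $v$ be the unit direction of $\ell$, put $W:=\{x\in\mathbb R^n:\langle x,v\rangle>0\}$, and on $W$ let $\sigma(x)$ denote the sine of the angle $\widehat{\ell,\ell_x}$ (a semialgebraic function of $x$), so that $N_\delta(\ell)=\{x\in W:\sigma(x)\leqslant\delta\}$ for every $\delta\in(0,1)$. Consider the definable set $G:=\{(x,\delta)\in W\times(0,1):x\in X,\ \sigma(x)\leqslant\delta\}$ and the continuous definable map $\Pi\colon G\to(0,1)\times(0,+\infty)$, $(x,\delta)\mapsto(\delta,\|x\|)$. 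For any $\delta<1$ and $\rho>0$ one has $\Pi^{-1}(\{\delta\}\times(0,\rho))=\big((X\setminus\{0\})\cap N_\delta(\ell)\cap\mathring{\mathbb B}^n_{\rho}\big)\times\{\delta\}$, and this set is nonempty for all small $\delta,\rho$ because $\ell\subset\mathcal C$.

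First I would apply Theorem~\ref{HardtTheorem} to $\Pi$, obtaining a finite definable partition $(0,1)\times(0,+\infty)=Y_1\sqcup\cdots\sqcup Y_p$ over each piece of which $\Pi$ is definably trivial. For each $\delta$ the slice $\{\delta\}\times(0,+\infty)$ meets the $Y_i$ in finitely many points and intervals; let $(0,c(\delta))$ be the interval adjacent to $0$ and $i(\delta)$ the index with $(0,c(\delta))\times\{\delta\}\subset Y_{i(\delta)}$. Then $c(\cdot)$ is a positive definable function and $i(\cdot)$ is definable with finite range, so $i\equiv i_0$ on some $(0,\delta_1)$. Using the monotonicity theorem for definable functions \cite{Dries1998} (with a short case split according to whether $\min(\delta,c(\delta))$ is eventually non-decreasing or non-increasing as $\delta\to0^+$) one then chooses $\delta_0\in(0,\delta_1]$ and a \emph{non-decreasing} continuous definable function $h\colon(0,\delta_0)\to(0,+\infty)$ with $0<h(\delta)\leqslant\min(\delta,c(\delta))$; the curvilinear triangle $R:=\{(\delta,r):0<\delta<\delta_0,\ 0<r<h(\delta)\}$ then lies in $Y_{i_0}$. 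Fixing $y_0\in Y_{i_0}$, writing $F:=\Pi^{-1}(y_0)$ (nonempty), and setting $S_\delta:=(X\setminus\{0\})\cap N_\delta(\ell)\cap\mathring{\mathbb B}^n_{h(\delta)}$, definable triviality of $\Pi$ over $Y_{i_0}$ furnishes a homeomorphism $\Theta\colon\Pi^{-1}(R)\to F\times R$ commuting with $\Pi$; restricting it to $\Pi^{-1}(\{\delta\}\times(0,h(\delta)))=S_\delta\times\{\delta\}$ gives $S_\delta\cong F\times(0,h(\delta))$. Hence the number of connected components of $S_\delta$ equals that of $F$, independent of $\delta\in(0,\delta_0)$, which is item (i).

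For item (ii), fix $\delta\in(0,\delta_0)$ and a connected component $Y$ of $S_\delta$; let $\widehat Y$ be the component of $\Pi^{-1}(R)$ containing $Y\times\{\delta\}$ and $Q_Y$ the component of $F$ with $\Theta(\widehat Y)=Q_Y\times R$. Using that $h$ is non-decreasing one checks that $Y\mapsto\widehat Y$ is a bijection from the components of $S_\delta$ onto those of $\Pi^{-1}(R)$, and that for $\delta'\in(0,\delta]$ and $r'\in(0,h(\delta'))$ any $x$ with $\sigma(x)\leqslant\delta'$ and $\|x\|=r'$ satisfies $(x,\delta')\in\widehat Y\iff x\in Y$ (slide along the segment $\{x\}\times[\delta',\delta]$, which stays inside $\Pi^{-1}(R)$ precisely because $h$ is non-decreasing). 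Therefore $\Theta$ maps $Y\cap N_{\delta'}(\ell)\cap\mathbb S^{n-1}_{r'}=\{x\in Y:\sigma(x)\leqslant\delta',\ \|x\|=r'\}$ homeomorphically onto $Q_Y\times\{(\delta',r')\}\cong Q_Y$, which is nonempty and connected. This gives item (ii).

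For item (iii): if $\ell\not\subset\mathcal C'$ then $v\notin\mathcal C'$ (since $\mathcal C'$ is a cone), and as $\mathcal C'\cap\mathbb S^{n-1}$ is closed and avoids $v$, the cap $N_\delta(\ell)\cap\mathbb S^{n-1}$ is disjoint from it for $\delta$ small; a limiting argument then shows $\psi(\delta):=\inf\{\|x\|:x\in(\overline X)_{sing}\setminus\{0\},\ x\in N_\delta(\ell)\}>0$, so it suffices to have also imposed $h\leqslant\psi$ when forming the minimum that defines $h$ above. For item (iv): on the open set $U_\delta:=\{\sigma<\delta\}\cap\mathring{\mathbb B}^n_{h(\delta)}$ one has $S_\delta\cap U_\delta=X\cap U_\delta$, hence $(S_\delta)_{reg}\cap U_\delta=X_{reg}\cap U_\delta$ with the same tangent spaces; since every sequence realizing a tangent limit along $\ell$ has directions tending to $v$ (so $\sigma\to0$) and therefore eventually lies in $U_\delta$, the two Nash fibers agree. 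The main obstacle is item (ii): securing, uniformly over all $\delta'\leqslant\delta$ and $r'<h(\delta')$, that each component of $S_\delta$ meets the shrunken spherical slice $N_{\delta'}(\ell)\cap\mathbb S^{n-1}_{r'}$ in a nonempty connected set; this is exactly what forces $h$ to be non-decreasing and makes the identification $Y\mapsto\widehat Y$, together with its bijectivity, the delicate point, while everything else is a routine combination of o-minimality with the product structure supplied by Hardt's theorem.
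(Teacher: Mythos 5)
Your proof is correct and follows essentially the same strategy as the paper: apply Hardt's theorem to the definable family of spherical slices $(X\setminus\{0\})\cap N_\delta(\ell)\cap\mathbb S^{n-1}_r$ parametrized by $(\delta,r)$, identify the piece of the resulting partition adjacent to $\{r=0\}$ for small $\delta$, and define $h$ so that the ``curvilinear triangle'' $R$ lies inside that piece. The one place where you add something useful is your explicit invocation of the monotonicity theorem to make $h$ non\-decreasing and continuous, together with the ``sliding'' argument identifying components of $S_\delta$ with components of $\Pi^{-1}(R)$: the paper dispatches items (i)--(ii) with the phrase ``by triviality'', and you correctly observe that item (ii) only makes sense (already for nonemptiness of $Y\cap N_{\delta'}(\ell)\cap\mathbb S^{n-1}_{r'}$ when $r'$ is close to $h(\delta')$) if $h(\delta')\leqslant h(\delta)$ for $\delta'\leqslant\delta$, i.e., if $h$ is chosen non\-decreasing; so your version supplies details the published proof leaves implicit.
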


\begin{proof} Consider the set 
$$A:=\{(x,\delta,r)\in\R^n\times[0,1]\times(0,+\infty):\ x\in (X\setminus\{0\})\cap N_\delta(\ell)\cap\mathbb S_r^{n-1}\}$$ 
and the projection 
$$\proj\colon A\to[0,1]\times(0,+\infty),\ (x,\delta,r)\mapsto(\delta,r).$$
Let $v$ be an arbitrary point in $\ell$. 
Note that $A$ is a definable set since it can be written in the following form
$$A=\left\{\begin{array}{lll}
(x,\delta,r)\in\R^n\times[0,1]\times(0,+\infty):& x\in (X\setminus\{0\})\cap\mathbb S_r^{n-1},\\
&\ \dist(x,\ell)\leqslant\delta\|x\|,\\
&\langle x,v\rangle\geqslant 0
\end{array}\right\}$$ 
where $\dist(\cdot,\cdot)$ still denotes the Euclidean distance. In light of Theorem~\ref{HardtTheorem}, there is a partition of $[0,1]\times(0,+\infty)$ into disjoint definable sets $B_1,\dots,B_k$ such that $\proj$ is definably trivial over each $B_i$. Obviously, there is a unique set  $B\in\{B_1,\dots,B_k\}$ such that:
\begin{itemize}
\item [(a)] $\dim B=2$; and
\item [(b)] there exists a constant $\delta_0>0$ such that $[0,\delta_0]\times\{0\}\subset \overline B.$ 
\end{itemize}
For $\delta\in(0,\delta_0),$ if there is $r>0$ such that $\{\delta\}\times(0,r)\cap B=\emptyset$, then set $h(\delta)=0$; otherwise, set
$$h(\delta):=\sup\{r\in (0,\delta):\ \{\delta\}\times(0,r)\subset B\}.$$
Clearly $h$ is a definable function.
Shrinking $\delta_0$, if necessary, so that either $h(\delta)> 0$ for all $\delta\in(0,\delta_0)$ or $h\equiv 0$ on $(0,\delta_0).$
Observe that the latter can not happen since otherwise, we shall have $[0,\delta_0]\times\{0\}\not\subset \overline B,$ which is a contradiction.

Now by triviality, Items (i) and (ii) follows. 

As $\mathcal C'$ is closed and $\ell$ is not a ray in $\mathcal C'$, by shrinking $\delta_0$, we have $N_{\delta_0}(\ell)\cap \mathcal C'=\{0\}$. Hence, it is not hard to see that there is a constant $r_0>0$ such that $$((\overline X)_{sing}\setminus\{0\})\cap N_{\delta_0}(\ell)\cap \mathring{\mathbb B}^n_{r_0}=\emptyset.$$
By shrinking $\delta_0$, if necessary, so that $\delta_0<r_0$, Item (iii) follows.

Let $P\in\mathcal N_\ell(X)$. By Lemma~\ref{ContainRay}, there exists a $C^1$ definable curve 
$\gamma\colon(0,\varepsilon)\to X\setminus X_{sing},$ such that 
$$\displaystyle\|\gamma(t)\|=t\ \text{for}\ t\in(0,\varepsilon),\ \displaystyle\lim_{t\to 0^+}\frac{\gamma(t)}{\|\gamma(t)\|}=v\in\ell\ \text{ and }\ \displaystyle\lim_{t\to 0^+}T_{\gamma(t)}X=P.$$ 
Shrinking $\varepsilon$ so that $\varepsilon\in(0,h(\delta))$ and $\displaystyle\frac{\gamma(t)}{\|\gamma(t)\|}\in N_{\delta}(\ell)$. 
Clearly $\gamma(t)\in (X\setminus\{0\})\cap N_\delta(\ell)\cap\mathring{\mathbb B}_{h(\delta)}^{n}$ for all $t\in(0,\varepsilon).$
Therefore $P\in \mathcal N_\ell((X\setminus\{0\})\cap N_\delta(\ell)\cap\mathring{\mathbb B}_{h(\delta)}^{n}).$
Consequently 
$$\mathcal N_\ell(X)\subset\mathcal N_\ell((X\setminus\{0\})\cap N_\delta(\ell)\cap\mathring{\mathbb B}_{h(\delta)}^{n}).$$
Now Item (iv) follows from observing that the inclusion $\mathcal N_\ell((X\setminus\{0\})\cap N_\delta(\ell)\cap\mathring{\mathbb B}_{h(\delta)}^{n})\subset \mathcal N_\ell(X)$ is trivial.
\end{proof}

\begin{lemma}\label{RegularConnected} 
With the notation in Lemma~\ref{Stable}, let $\ell$ be a ray in $\mathcal C\setminus \mathcal C'$ and let $Y$ be a connected component of $(X\setminus\{0\})\cap N_\delta(\ell)\cap\mathring{\mathbb B}_{h(\delta)}^{n}$, where $\delta\in(0,\delta_0)$. Then $\mathcal N_\ell(Y)$ is connected. 
\end{lemma}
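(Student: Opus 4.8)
The plan is to prove the stronger statement that any two planes $P,Q\in\mathcal N_\ell(Y)$ lie in one and the same connected subset of $\mathcal N_\ell(Y)$; since the empty set and a single point are connected, we may assume $\#(\mathcal N_\ell(Y))\geqslant 2$. First I would reduce to a convenient situation. By Lemma~\ref{NotEmpty} one may replace $X$ by $\overline X$ (this changes neither the tangent cone nor the Nash fibres, and one checks it does not change the component $Y$ relevant to $\ell$ either), so assume $X$ is closed. Since $\ell\subset\mathcal C\setminus\mathcal C'$, Lemma~\ref{Stable}(iii) then gives that $X$ has no singular point in $N_{\delta_0}(\ell)\cap\mathring{\mathbb B}^n_{h(\delta_0)}$; together with the pure dimensionality of $X$ this makes $Y$ a $C^1$ submanifold of pure dimension $d$ throughout that conical neighbourhood, so every tangent space $T_xY$ appearing below is well defined and $d$-dimensional.

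Next I would realise $P$ and $Q$ by curves lying in $Y$. Exactly as in the proof of Lemma~\ref{ContainRay}, applying the Curve Selection lemma (Lemma~\ref{CurveSelectionLemma}) to the set $\overline{\{(t,u,T_{tu}Y):\ tu\in Y,\ t=\|tu\|\}}$ at the points $(0,v,P)$ and $(0,v,Q)$, where $v\in\ell\cap\mathbb S^{n-1}$, produces $C^1$ definable curves $\gamma_P,\gamma_Q\colon(0,\varepsilon)\to Y$ with $\|\gamma_P(t)\|=\|\gamma_Q(t)\|=t$, with $\gamma_P(t)/t\to v$ and $\gamma_Q(t)/t\to v$, and with $T_{\gamma_P(t)}Y\to P$ and $T_{\gamma_Q(t)}Y\to Q$ as $t\to 0^+$.

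The core of the argument is then a limiting construction. Fix a sequence $\delta_j\downarrow 0$ with $\delta_j<\delta$. Because $\gamma_P(t)/t$ and $\gamma_Q(t)/t$ tend to the axis $v$ of the cones $N_{\delta_j}(\ell)$, there is $\varepsilon_j\in(0,\varepsilon]$ with $\gamma_P(t),\gamma_Q(t)\in N_{\delta_j}(\ell)$ for $t\in(0,\varepsilon_j)$; since $h(\delta_j)>0$, choose $r_j\in(0,\min(\varepsilon_j,h(\delta_j)))$ with $r_j\downarrow 0$. By Lemma~\ref{Stable}(ii) the set $L_j:=Y\cap N_{\delta_j}(\ell)\cap\mathbb S^{n-1}_{r_j}$ is a nonempty connected definable set, and it contains both $\gamma_P(r_j)$ and $\gamma_Q(r_j)$; a connected definable set being path connected, I join these two points by a continuous path $\sigma_j\colon[0,1]\to L_j$. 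Since $\delta_j<\delta_0$ and $r_j<h(\delta_j)$, every $\sigma_j(s)$ is a regular point of $Y$, so $K_j:=\{T_{\sigma_j(s)}Y:\ s\in[0,1]\}$ is a connected compact subset of the Grassmannian $\mathbb G(d,n)$ containing $T_{\gamma_P(r_j)}Y$ and $T_{\gamma_Q(r_j)}Y$. Passing to a subsequence (the compact subsets of the compact space $\mathbb G(d,n)$ form a compact space for the Hausdorff metric), we may assume $K_{j}\to K$ for some nonempty compact $K\subset\mathbb G(d,n)$. A Hausdorff limit of connected compacta is connected, so $K$ is connected. Moreover $T_{\gamma_P(r_j)}Y\to P$ and $T_{\gamma_Q(r_j)}Y\to Q$ as $j\to\infty$ (because $r_j\to 0$), whence $P,Q\in K$. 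Finally, if $x\in K$, pick $x_j\in K_j$ with $x_j\to x$ and write $x_j=T_{\sigma_j(s_j)}Y$; then $z_j:=\sigma_j(s_j)$ satisfies $\|z_j\|=r_j\to 0$ and $\sin\widehat{\ell,\ell_{z_j}}\leqslant\delta_j\to 0$, so $z_j\to 0$ and $z_j/\|z_j\|\to v$, and therefore $x\in\mathcal N_\ell(Y)$ by definition of the Nash fibre. Thus $K\subset\mathcal N_\ell(Y)$, and $P,Q$ lie in the connected set $K$; as $P,Q$ were arbitrary, $\mathcal N_\ell(Y)$ is connected.

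The main obstacle I anticipate is not in this limiting mechanism — which is short and robust once it is in place — but in the preparatory bookkeeping: checking that passing to $\overline X$ does not alter the component $Y$ relevant to $\ell$, that the curves $\gamma_P,\gamma_Q$ and the paths $\sigma_j$ genuinely stay inside $Y$ and inside its regular locus (this is precisely where Lemma~\ref{Stable}(iii) and the reduction to closed $X$ are used), and that the parameters $\delta_j,\varepsilon_j,r_j$ can be chosen mutually compatibly with the function $h$ of Lemma~\ref{Stable}. The only structural input needed is Lemma~\ref{Stable}(ii): the connectedness of the spherical links $Y\cap N_{\delta'}(\ell)\cap\mathbb S^{n-1}_{r'}$ is exactly what propagates, via the Hausdorff limit in the Grassmannian, to the connectedness of the limit fibre $\mathcal N_\ell(Y)$.
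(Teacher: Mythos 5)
Your argument is correct, and it reaches the conclusion by a genuinely different route from the paper. The paper's proof is indirect: assuming $\mathcal N_\ell(Y)$ disconnected, it fixes two components $M_1,M_2$, a closed $\eta$-neighbourhood $U_\eta(M_1)$ separating them, and two definable curves realising $P_1\in M_1$, $P_2\in M_2$ via Lemma~\ref{ContainRay}; it then joins the two curve points on the sphere $\mathbb S^{n-1}_{b(t)}$ by paths in the spherical link (Lemma~\ref{Stable}(ii)) and observes, by continuity of the Gauss map along those paths, that each must cross $\partial U_\eta(M_1)$; compactness of $\mathbb G(n-1,n)$ extracts a limit plane in $\partial U_\eta(M_1)\cap\mathcal N_\ell$, which contradicts the choice of $\eta$. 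You replace this boundary-crossing argument by a direct one: for arbitrary $P,Q\in\mathcal N_\ell(Y)$ you take the connected compact images $K_j$ of the Gauss map along connecting paths at shrinking radii $r_j$ and conical widths $\delta_j$, pass to a subsequential Hausdorff limit $K$ in the hyperspace of $\mathbb G(d,n)$, and show $K$ is connected, contains $P$ and $Q$, and lies inside $\mathcal N_\ell(Y)$. The key structural input is identical in both proofs --- Lemma~\ref{Stable}(ii) for connectedness of spherical links and Lemma~\ref{ContainRay} for curves realising a given tangent limit --- but your route sidesteps the auxiliary neighbourhood $U_\eta(M_1)$ and the intermediate-value step, at the cost of invoking compactness of the Hausdorff hyperspace and the (standard) fact that a Hausdorff limit of connected compacta is connected. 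Your version also directly gives connected sets joining prescribed pairs $P,Q$, which is slightly stronger information. The preparatory points you flag --- the reduction to $\overline X$ and the regularity of $Y$ in the conical neighbourhood --- are exactly the ones the paper handles: it performs the closure reduction at the level of Theorem~\ref{Connected} before invoking Lemma~\ref{Stable} and this lemma, and Lemma~\ref{Stable}(iii) then gives regularity throughout $N_\delta(\ell)\cap\mathring{\mathbb B}^n_{h(\delta)}$, so your bookkeeping is consistent with the paper's.
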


\begin{proof} 
Without loss of generality, we may suppose that $(X\setminus\{0\})\cap N_\delta(\ell)\cap\mathring{\mathbb B}_{h(\delta)}^{n}$ is connected, so 
\begin{equation*}\label{Yconnected}Y=(X\setminus\{0\})\cap N_\delta(\ell)\cap\mathring{\mathbb B}_{h(\delta)}^{n}.\end{equation*}
Furthermore, by Lemma~\ref{Stable}(iv), $\mathcal N_\ell(Y)= \mathcal N_\ell(X)\subset\mathbb G(n-1,n)$. 
Assume for contradiction that $\mathcal N_\ell(Y)$ is not connected. Let $M_1$ and $M_2$ be two distinct connected components of $\mathcal N_\ell(X)$ which are clearly closed sets in $\mathbb G(n-1,n)$. Let $\eta>0$ be such that 
\begin{equation}\label{UM1}U_\eta(M_1)\cap (\mathcal N_\ell\setminus M_1)=\emptyset, \end{equation}
where $U_\eta(M_1)$ is the closed neighborhood of radius $\eta$ of $M_1$, namely,
$$U_\eta(M_1)=\{P\in\mathbb G(n-1,n):\ \text{there is } Q\in M_1 \text{ such that } \angle(P,Q)\leqslant\eta\}.$$ 
Let $P_1\in M_1$ and $P_2\in M_2$. By Lemma~\ref{ContainRay}, there exist two $C^1$ definable curves 
$$\gamma_1,\gamma_2\colon (0,\varepsilon)\to Y$$ 
such that:
\begin{itemize} 
\item [(a)] $\|\gamma_1(t)\|=\|\gamma_2(t)\|=t$ for $t\in(0,\varepsilon)$;
\item [(b)] $\gamma_1(t)\cap  X_{sing}=\emptyset$ and $\gamma_2(t)\cap  X_{sing}=\emptyset$ for $t\in(0,\varepsilon)$;
\item [(c)] $\displaystyle\lim_{t\to 0^+}\frac{\gamma_1(t)}{\|\gamma_1(t)\|}=\lim_{t\to 0^+}\frac{\gamma_2(t)}{\|\gamma_2(t)\|}=v\in\ell$; 
\item [(d)] $\displaystyle\lim_{t\to 0^+}T_{\gamma_1(t)}X=P_1$ and $\displaystyle\lim_{t\to 0^+}T_{\gamma_2(t)}X=P_2$.
\end{itemize}
Shrinking $\varepsilon$ so that $\varepsilon\leqslant h(\delta)$ and let 
$$g(t):=\max\left\{\sup_{0<t'\leqslant t}\sin\widehat{\ell,\ell_{\gamma_1(t')}},\sup_{0<t'\leqslant t}\sin\widehat{\ell,\ell_{\gamma_2(t')}}\right\}.$$
Clearly 
\begin{equation}\label{rdelta}g(t)\to 0 \text{ as } t\to 0^+\ \text{ and }\ g(t)\leqslant\delta \text{ for } t\in(0,h(\delta)).\end{equation}
The latter, together with the assumption that $Y=(X\setminus\{0\})\cap  N_\delta(\ell) \cap \mathring{\mathbb B}_{h(\delta)}^{n}$ is connected and Lemma~\ref{Stable}(ii), implies that, for $t\in(0,\varepsilon)$, the set $(X\setminus\{0\})\cap N_{g(t)}(\ell)\cap\mathbb S_{b(t)}^{n-1}$ is also connected, where 
\begin{equation}\label{bt}
b(t)=\frac{\min\{h(g(t)),h(\delta)\}}{2}.
\end{equation} 
Hence there exists a continuous curve 
$$\alpha_t\colon[0,1]\to (X\setminus\{0\})\cap N_{g(t)}(\ell)\cap\mathbb S_{b(t)}^{n-1}, \ s\mapsto\alpha_t(s),$$ 
such that $\alpha_t(0)=\gamma_1(b(t))$ and $\alpha_t(1)=\gamma_2(b(t))$. 
Obviously, the mapping $$s\mapsto \beta_t(s):=T_{\alpha_t(s)}X$$ is continuous. 
By Item (d), for $t>0$ small enough, we have 
$$\beta_t(0)\in U_\eta(M_1)\ \text{ and }\ \beta_t(1)\not\in U_\eta(M_1).$$ 
In particular, these hold for $t=\frac{1}{k}$ with $k\in\mathbb N$ large enough. 
By continuity, for all such $k$, there is $s_k\in(0,1)$ such that $\beta_{\frac{1}{k}}(s_k)\in \partial U_\eta(M_1)$. 
By the compactness of $\partial U_\eta(M_1)$, which follows from the closedness of $\partial U_\eta(M_1)$ and the compactness of $\mathbb G(n-1,n)$, the sequence $T_{\alpha_{\frac{1}{k}}(s_k)}X=\beta_{\frac{1}{k}}(s_k)$ has an accumulation point in $\partial U_\eta(M_1)$, say $P.$ 
From the definition of $g(t),\ \alpha_t$,~\eqref{rdelta} and~\eqref{bt}, we have
$$\sin\widehat{\ell,\ell_{\alpha_{\frac{1}{k}}(s_k)}}\leqslant g\left(\frac{1}{k}\right)\to 0 \text{ and } \|\alpha_{\frac{1}{k}}(s_k)\|=b\left(\frac{1}{k}\right)\leqslant\frac{h(g(\frac{1}{k}))}{2}\leqslant\frac{g(\frac{1}{k})}{2}\to 0 \text{ as } k\to+\infty .$$
Thus, $P\in \mathcal N_\ell$. 
Therefore $$\emptyset\ne\partial U_\eta(M_1)\cap\mathcal N_\ell=\partial U_\eta(M_1)\cap(\mathcal N_\ell\setminus M_1)\subset U_\eta(M_1)\cap(\mathcal N_\ell\setminus M_1).$$ 
This contradicts~\eqref{UM1} and so ends the proof of the lemma.
\end{proof}

We also need the following key lemma which relates the differential of the distance function between two disjoint non singular hypersurfaces with the angle between the corresponding tangent hyperplanes.
\begin{lemma}\label{Key} Let $Y,Z\subset\mathbb R^n$, with $n\geqslant 2$, be two disjoint non singular hypersurfaces. Define the function 
$$\widetilde\rho:\mathbb R^n\times \mathbb R^n\to\mathbb R,\ (y,z)\mapsto \|y-z\|=\sqrt{\sum_{i=1}^n(y_i-z_i)^2}.$$ For any $y\in Y$ and $z\in Z$, the following statements hold:
\begin{itemize}
\item [(i)] If $(y,z)$ is a critical point of the restriction $\widetilde\rho|_{Y\times Z}$, then 
$\angle(T_yY,T_zZ)=0.$
\item [(ii)] For each $(y,x)\in Y\times Z,$  we have
\begin{equation*}\label{BoundBelow}\|\nabla(\widetilde\rho|_{Y\times Z})(y,z)\|\geqslant \frac{\sin\angle(T_y Y,T_z Z)}{\sqrt{2}}, \end{equation*} 
where $\nabla(\widetilde\rho|_{Y\times Z})$ denotes the gradient of $\widetilde\rho|_{Y\times Z}.$
\end{itemize}
\end{lemma}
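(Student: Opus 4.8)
The plan is to compute $\nabla(\widetilde\rho|_{Y\times Z})(y,z)$ explicitly as an orthogonal projection of the ambient gradient, and then to reduce both items to an elementary trigonometric inequality comparing the angle between the two tangent hyperplanes with the angles each of them makes with the segment direction $y-z$.

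First I would observe that, since $Y\cap Z=\emptyset$, we have $y\neq z$, so $\widetilde\rho$ is of class $C^\infty$ on a neighbourhood of $(y,z)$ in $\mathbb R^n\times\mathbb R^n$, with ambient gradient $\nabla\widetilde\rho(y,z)=(u,-u)$, where $u:=\frac{y-z}{\|y-z\|}\in\mathbb S^{n-1}$. The function $\widetilde\rho|_{Y\times Z}$ is the restriction of $\widetilde\rho$ to the $C^1$ submanifold $Y\times Z$, whose tangent space at $(y,z)$ is $T_yY\times T_zZ$, so its intrinsic gradient is the orthogonal projection of $(u,-u)$ onto $T_yY\times T_zZ$, namely
\[
\nabla(\widetilde\rho|_{Y\times Z})(y,z)=\bigl(\pi_{T_yY}(u),\,-\pi_{T_zZ}(u)\bigr),
\qquad
\|\nabla(\widetilde\rho|_{Y\times Z})(y,z)\|^2=\|\pi_{T_yY}(u)\|^2+\|\pi_{T_zZ}(u)\|^2 .
\]
From this, item (i) is immediate: a critical point is one where the above gradient vanishes, i.e.\ where $u\perp T_yY$ and $u\perp T_zZ$; since $T_yY$ and $T_zZ$ are hyperplanes and $u\neq0$, this forces $T_yY=u^{\perp}=T_zZ$, hence $\angle(T_yY,T_zZ)=0$. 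Alternatively, (i) follows from (ii) once that is proved, since $\angle(T_yY,T_zZ)\in[0,\frac{\pi}{2}]$.

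For (ii) I would choose unit normals $a$ to $T_yY$ and $b$ to $T_zZ$, with signs arranged so that $\langle u,a\rangle\geqslant0$ and $\langle u,b\rangle\geqslant0$, and set $\theta_a:=\widehat{u,a}$, $\theta_b:=\widehat{u,b}\in[0,\frac{\pi}{2}]$ and $\psi:=\widehat{a,b}\in[0,\pi]$. The orthogonal decomposition $u=\pi_{T_yY}(u)+\langle u,a\rangle a$ gives $\|\pi_{T_yY}(u)\|^2=1-\langle u,a\rangle^2=\sin^2\theta_a$, and similarly $\|\pi_{T_zZ}(u)\|^2=\sin^2\theta_b$; and a short computation straight from the definition of the angle between subspaces shows that for hyperplanes with unit normals $a,b$ one has $\sin\angle(T_yY,T_zZ)=\max_{v\in T_yY\cap\mathbb S^{n-1}}|\langle v,b\rangle|=\|\pi_{T_yY}(b)\|=\sqrt{1-\langle a,b\rangle^2}=\sin\psi$. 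So everything reduces to proving $\sin^2\theta_a+\sin^2\theta_b\geqslant\frac12\sin^2\psi$. The triangle inequality on the unit sphere applied to $u,a,b$ gives $\psi\leqslant\theta_a+\theta_b$, and I would then check the elementary bound $\sin\psi\leqslant\sin\theta_a+\sin\theta_b$ for all $\theta_a,\theta_b\in[0,\frac{\pi}{2}]$ and $0\leqslant\psi\leqslant\theta_a+\theta_b$: if $\theta_a+\theta_b\leqslant\frac{\pi}{2}$ this is monotonicity of $\sin$ on $[0,\frac{\pi}{2}]$ together with the addition formula, while if $\theta_a+\theta_b>\frac{\pi}{2}$ then $\sin\theta_a+\sin\theta_b=2\sin\frac{\theta_a+\theta_b}{2}\cos\frac{\theta_a-\theta_b}{2}>2\cdot\frac{1}{\sqrt2}\cdot\frac{1}{\sqrt2}=1\geqslant\sin\psi$. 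Hence $\sin^2\psi\leqslant(\sin\theta_a+\sin\theta_b)^2\leqslant2(\sin^2\theta_a+\sin^2\theta_b)$, which is exactly what is needed; taking square roots in $\|\nabla(\widetilde\rho|_{Y\times Z})(y,z)\|^2=\sin^2\theta_a+\sin^2\theta_b\geqslant\frac12\sin^2\angle(T_yY,T_zZ)$ yields the claimed bound.

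The part I expect to require the most care is pinning down the constant $\frac{1}{\sqrt2}$: this is where the identity $\sin\angle(T_yY,T_zZ)=\sin\widehat{a,b}$ for hyperplanes, the spherical triangle inequality $\widehat{a,b}\leqslant\widehat{u,a}+\widehat{u,b}$, and the sharp elementary inequality $(s+t)^2\leqslant 2(s^2+t^2)$ must all be combined correctly. The rest is routine linear algebra and one-variable trigonometry, and the hypotheses $Y\cap Z=\emptyset$ (so that $u$ is well defined) and $n\geqslant2$ (so that the tangent spaces are genuine hyperplanes with one-dimensional normal lines) are used only at these two points.
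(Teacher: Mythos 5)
Your computation that $\nabla(\widetilde\rho|_{Y\times Z})(y,z)=\bigl(\pi_{T_yY}(u),-\pi_{T_zZ}(u)\bigr)$ with $u=\frac{y-z}{\|y-z\|}$ is correct, and the reduction of item (i) to $u\perp T_yY$ and $u\perp T_zZ$ agrees with the paper's proof of (i) essentially word for word. For item (ii), however, you take a genuinely different and more algebraic route. The paper argues synthetically: it forms the affine intersection $W=(\{y\}+T_yY)\cap(\{z\}+T_zZ)$, splits into cases according to whether $y,z$ lie in $W$, introduces the foot $w$ of the perpendicular from $y$ to $W$, and reduces the estimate to a planar figure involving the lines $L_y$ through $w,y$ and $L_z$ through $w,z$, deriving the bound by computing lengths of orthogonal projections of $y-z$ onto these lines. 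Your proof instead diagonalizes everything through unit normals $a,b$ and the unit segment direction $u$: you identify $\|\pi_{T_yY}(u)\|^2+\|\pi_{T_zZ}(u)\|^2=\sin^2\theta_a+\sin^2\theta_b$, use that $\sin\angle(T_yY,T_zZ)=\sin\widehat{a,b}$ for hyperplanes, and close the argument with the spherical triangle inequality $\widehat{a,b}\leqslant\theta_a+\theta_b$ together with the two elementary one-variable inequalities $\sin\psi\leqslant\sin\theta_a+\sin\theta_b$ and $(s+t)^2\leqslant 2(s^2+t^2)$. The identities $\|\pi_{T_yY}(u)\|=\sin\theta_a$ and $\sin\angle(T_yY,T_zZ)=\sqrt{1-\langle a,b\rangle^2}$, your two-case verification of $\sin\psi\leqslant\sin\theta_a+\sin\theta_b$ on $\psi\in[0,\theta_a+\theta_b]$, and the final Cauchy--Schwarz step are all correct, and together they give the same constant $\frac{1}{\sqrt2}$ as the paper. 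What your approach buys is that it dispenses with the case analysis on whether $y,z$ lie in the affine intersection $W$ and with the accompanying figure, and it isolates cleanly where the codimension-one hypothesis enters (namely, in the identity $\sin\angle=\sin\widehat{a,b}$ and in forcing $T_yY=u^\perp=T_zZ$ in item (i)); what the paper's approach buys is a picture one can see, and an argument that does not require knowing the hyperplane formula for the Grassmannian angle in advance.
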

\begin{proof} (i) Assume that $(y, z)$ is a critical point of $\widetilde\rho|_{Y\times Z},$ i.e.,  
$$\displaystyle d_{(y,z)}\widetilde\rho(T_yY\times T_zZ) = 0.$$ 
Hence, for any $(a,0)\in T_y Y\times \{0\}\subset T_yY\times T_zZ$, we have 
\begin{eqnarray*}
0= d_{(y,z)} \widetilde\rho(a,0 )  =\left\langle\frac{(y-z,z-y)}{\|y-z\|},(a,0)\right\rangle= \frac{\langle y - z, a \rangle}{\|y - z\|}.
\end{eqnarray*}
Consequently, $y - z \perp T_yY.$ Similarly, we have also $y-z\perp T_zZ$. Therefore, Item (i) follows from remarking that $\dim T_yY=\dim T_zZ=n-1$ by assumption. 

(ii) Since the statement is clear if $\angle(T_yY,T_zZ)=0$, let us suppose that $\angle(T_y Y,T_z Z)\ne 0$. 
Set $$W=(\{y\}+T_yY)\cap (\{z\}+T_zZ).$$ 
Note that $T_yY$ and $T_zZ$ are of dimension $n-1$, so $W\ne\emptyset$. We consider two cases:

\subsubsection*{Case 1: $y,z\in W$.} We have 
$$\nabla\widetilde\rho(y,z)=\frac{(y-z,z-y)}{\|y-z\|}\in T_yY\times T_zZ.$$ 
This, together with the fact that $\nabla(\widetilde\rho|_{Y\times Z})(y,z)$ is the orthogonal  projection of $\nabla\widetilde\rho(y,z)$ on $T_yY\times T_zZ$, implies
$$\displaystyle\|\nabla(\widetilde\rho|_{Y\times Z})(y,z)\|=\|\nabla\widetilde\rho(y,z)\|=\left\|\frac{(y-z,z-y)}{\|y-z\|}\right\|=\sqrt{2}>\frac{\sin\angle(T_y Y,T_z Z)}{\sqrt{2}}.$$

\subsubsection*{Case 2: $y\not\in W$ or $z\not\in W$.} Assume with no loss of generality that $y\not\in W$. 
Let $w$ be the orthogonal projection of $y$ on $W$. 
We denote by $L_y$ the line through $w$ and $y$, and by $L_z$ the line through $w$ and $z$ if $w\ne z$. If $w=z$, we let $L_z$ be any line in $W$ through $z$. 
It is clear that $$\angle(L_y,L_z)\geqslant\angle(T_yY,T_zZ).$$  
Furthermore, since $\widetilde L_y\times \widetilde L_z$ is a subspace of  ${T_yY\times T_zZ}$, where 
$$\widetilde L_y:=\{-y\}+ L_y\ \text{ and }\ \widetilde L_z:=\{-z\}+ L_y,$$ 
we get
$$\|\nabla(\widetilde\rho|_{Y\times Z})(y,z)\|=\|\proj_{T_yY\times T_zZ}\nabla\widetilde\rho(y,z)\|\geqslant\|\proj_{\widetilde L_y\times \widetilde L_z}\nabla\widetilde\rho(y,z)\|,$$ 
where $\proj_{T_yY\times T_zZ}$ and $\proj_{\widetilde L_y\times \widetilde L_z}$ are respectively the orthogonal projections on $T_yY\times T_zZ$ and $\widetilde L_y\times \widetilde L_z.$ 
So to prove the statement in this case, it is enough to show that 
\begin{equation}\label{F1}
\|\proj_{\widetilde L_y\times \widetilde L_z} \nabla\widetilde\rho(y,z)\|\geqslant\frac{\sin\angle(L_y,L_z)}{\sqrt{2}}.
\end{equation} 
Let $y'$ and $z'$ be the orthogonal projection of $y$ and $z$ on $L_z$ and $L_y$, respectively. Then it is clear that 
\begin{equation}\label{F2}
\begin{array}{lll}
\|\proj_{\widetilde L_y\times \widetilde L_z}\nabla\widetilde\rho(y,z)\|
&=&\displaystyle\left \|\proj_{\widetilde L_y\times \widetilde L_z}\frac{(y-z,z-y)}{\|y-z\|}\right \|\\
&=&\displaystyle\frac{\big\|(\displaystyle\proj_{\widetilde L_y}(y-z),\proj_{\widetilde L_z}(z-y))\big\|}{\|y-z\|} \\
&=&\displaystyle\frac{\sqrt{\|y-z'\|^2+\|z-y'\|^2}}{\|y-z\|} \text{ (see Figure 1)}.
\end{array}
\end{equation}
If $z=z'$, then we have
$$\|\proj_{\widetilde L_y\times \widetilde L_z}\nabla\widetilde\rho(y,z)\|\geqslant\frac{\|y-z'\|}{\|y-z\|}=\frac{\|y-z\|}{\|y-z\|}=1,$$
and~\eqref{F1} follows. So suppose that $z\ne z'$. Remark that $y\ne y'$ as $y\not\in W$ and $y'\in W$. Denote by $x$ the intersection point of the line through $y,y'$ and the line through $z,z'$. Then it is not hard to check that (see Figure 1)
$$\|y-z'\|=\|y-x\|\sin\angle(L_y,L_z) \ \text{ and }\ \ \|z-y'\|=\|z-x\|\sin\angle(L_y,L_z).$$ 
This and~\eqref{F2} together yield
\begin{eqnarray*}
\|\proj_{\widetilde L_y\times \widetilde L_z}\nabla\widetilde\rho(y,z)\| & = & \frac{\sqrt{\|y-x\|^2\sin^2\angle(L_y,L_z)+\|z-x\|^2\sin^2\angle(L_y,L_z)}}{\|y-z\|}\\
&= &\frac{\sqrt{\|y-x\|^2+\|z-x\|^2}\sin\angle(L_y,L_z)}{\|y-z\|}\\
&\geqslant &\frac{\sqrt{(\|y-x\|+\|z-x\|)^2}\sin\angle(L_y,L_z)}{\sqrt{2}\|y-z\|}\\
&\geqslant &\frac{\|y-z\|\sin\angle(L_y,L_z)}{\sqrt{2}\|y-z\|}=\frac{\sin\angle(L_y,L_z)}{\sqrt{2}},
\end{eqnarray*}
which proves \eqref{F1} and so the lemma follows.
\end{proof}

\begin{tikzpicture}[scale=1]
\def\d{3};\def\k{0.5};\def\r{3.5};\def\m{2};\def\s{0.2}; \def\c{1.5};\def\e{2.5}; \def\f{0.75}; \def\g{1.25}

\coordinate (u) at ($\r/sqrt(2)*(1,1)$);
\coordinate (v) at ($\r/sqrt(2)*(-1,1)$);
\coordinate (w) at ($\r/sqrt(2)*(-1,0)$);
\coordinate (z) at ($\r/sqrt(2)*(0,1)$);

\coordinate (O1) at (-\d-\k,0);

\draw [thick](-2*\d-\k,0) -- (-\k,0);
\draw [thick](-2*\d-\k,{-\f*\r/sqrt(2)}) -- (0,{-\f*\r/sqrt(2)});
\draw [thick]($(O1)+(v)+(0,6*\k/\m)$) -- ($(O1)-(u)$);
\draw [domain=-2*\d-0.5*\k: -1.5*\k,thick,variable=\t] plot ({\t},{-\t-\d-\k});
\draw [domain=-2*\d-0.5*\k: 0,thick,variable=\t] plot ({\t},{-\t-\d-\k+\k*\r/sqrt(2)});
\draw [domain=-2*\d-0.5*\k: -\d,thick,variable=\t] plot ({\t},{\t+\d+\k+\k*\r/sqrt(2)});
\draw [thick]($(O1)+(w)+\s/\e*(z)$) -- ($(O1)+(w)+\s/\e*(z)-\s/\e*(w)$);
\draw [thick]($(O1)+(w)-\s/\e*(w)$) -- ($(O1)+(w)+\s/\e*(z)-\s/\e*(w)$);
\draw [thick]($(O1)+\k*\k*(v)+\s/\r*(w)-\s/\r*(z)$) -- ($(O1)+\k*\k*(v)+2*\s/\r*(w)$);
\draw [thick]($(O1)+\k*\k*(v)+\s/\r*(w)+\s/\r*(z)$) -- ($(O1)+\k*\k*(v)+2*\s/\r*(w)$);

\node [right] at ($(O1)+(0,\k/\m)$) {$w$};
\node [right] at (-2*\k+\k,0) {$L_z$};
\node [below] at (-2*\k,-\d+0.5*\k) {$L_y$};
\node [right] at ($(O1)+(v)+(0,\k/\m)$) {$y$};
\node [left] at ($(O1)+(w)+(0,\k/\m)$) {$y'$};
\node [left] at ($(O1)+\k*(w)+(0,\k/\m)$) {$z$};
\node [above] at ($(O1)+\k*\k*(v)+(0,\k/\m/\m)$) {$z'$};
\node [right] at ($(O1)+(w)-\k*(z)-(0,\k/\m)$) {$x$};
\node [right] at ($(O1)-1.25*(w)-0.75*(z)+(0,\k/\m)$) {$0$};
\node [left] at ($(O1)+1.25*(w)-0.75*(z)$) {$\widetilde L_z$};
\node [below] at ($(O1)-1.25*(w)-0.75*(z)+(\k/\m,-\k/\m)$) {$\widetilde L_y$};

\filldraw (-\d-\k,0) circle (2pt);
\filldraw ($(O1)+(v)$) circle (2pt);
\filldraw ($(O1)+(w)$) circle (2pt);
\filldraw ($(O1)+\k*(w)$) circle (2pt);
\filldraw ($(O1)+\k*\k*(v)$) circle (2pt);
\filldraw ($(O1)+(w)-\k*(z)$) circle (2pt);
\filldraw ($(O1)-1.25*(w)-0.75*(z)$) circle (2pt);

\draw [thick,domain=135:180] plot ({-\d-\k+\c*\s*cos(\x)}, {\c*\s*sin(\x)});
\draw [thick,domain=45:90] plot ({-\d-\k-\r/sqrt(2)+\c*\s*cos(\x)}, {-\k*\r/sqrt(2)+\c*\s*sin(\x)});
\draw [thick,domain=135:180] plot ({-\d-\k+\g*\r/sqrt(2)+\c*\s*cos(\x)}, {-\f*\r/sqrt(2)+\c*\s*sin(\x)});


\coordinate (O2) at (\d+\k,0);

\draw [thick](\k,0) -- (2*\d+\k,0);
\draw [thick](\k,{-\f*\r/sqrt(2)}) -- (2*\d+2.5*\k,{-\f*\r/sqrt(2)});
\draw [thick]($(O2)+(v)+(0,6*\k/\m)$) -- ($(O2)-(u)-(0,7*\k)$);
\draw [domain=2*\k-0.5*\k: 2*\d+0.5*\k,thick,variable=\t] plot ({\t},{-\t+\d+\k});
\draw [domain=2*\k-0.5*\k: 2*\d+2*\k,thick,variable=\t] plot ({\t},{-\t+\d+\k+\k*\r/sqrt(2)});
\draw [domain=2*\k-0.5*\k: 2*\d+0.5*\k,thick,variable=\t] plot ({\t},{\t-\d-\k-\r/sqrt(2)});
\draw [thick]($(O2)+(w)+\s/\e*(z)$) -- ($(O2)+(w)+\s/\e*(z)-\s/\e*(w)$);
\draw [thick]($(O2)+(w)-\s/\e*(w)$) -- ($(O2)+(w)+\s/\e*(z)-\s/\e*(w)$);
\draw [thick]($(O2)-\k*(v)+\s/\r*(w)-\s/\r*(z)$) -- ($(O2)-\k*(v)+2*\s/\r*(w)$);
\draw [thick]($(O2)-\k*(v)+\s/\r*(w)+\s/\r*(z)$) -- ($(O2)-\k*(v)+2*\s/\r*(w)$);

\node [right] at ($(O2)+(0,\k/\m)$) {$w$};
\node [right] at (2*\d+\k,0) {$L_z$};
\node [below] at (2*\d,-\d+0.5*\k) {$L_y$};
\node [right] at ($(O2)+(v)+(0,\k/\m)$) {$y$};
\node [left] at ($(O2)+(w)+(0,\k/\m)$) {$y'$};
\node [left] at ($(O2)-(w)+(0,\k/\m)$) {$z$};
\node [above] at ($(O2)-\k*(v)+(0,\k/\m/\m)$) {$z'$};
\node [right] at ($(O2)+(w)-2*(z)-(0,\k/\m)$) {$x$};
\node [right] at ($(O2)-1.25*(w)-0.75*(z)+(0,\k/\m)$) {$0$};
\node [right] at ($(O2)-1.5*(w)-0.75*(z)$) {$\widetilde L_z$};
\node [below] at ($(O2)-1.25*(w)-0.75*(z)+(\k/\m,-\k/\m)$) {$\widetilde L_y$};

\filldraw (\d+\k,0) circle (2pt);
\filldraw ($(O2)+(v)$) circle (2pt);
\filldraw ($(O2)+(w)$) circle (2pt);
\filldraw ($(O2)-(w)$) circle (2pt);
\filldraw ($(O2)-\k*(v)$) circle (2pt);
\filldraw ($(O2)+(w)-2*(z)$) circle (2pt);
\filldraw ($(O2)-1.25*(w)-0.75*(z)$) circle (2pt);

\draw [thick,domain=135:180] plot ({\d+\k+\c*\s*cos(\x)}, {\c*\s*sin(\x)});
\draw [thick,domain=45:90] plot ({\d+\k-\r/sqrt(2)+\c*\s*cos(\x)}, {-2*\r/sqrt(2)+\c*\s*sin(\x)});
\draw [thick,domain=135:180] plot ({\d+\k+\g*\r/sqrt(2)+\c*\s*cos(\x)}, {-\f*\r/sqrt(2)+\c*\s*sin(\x)});

\end{tikzpicture}
$$\textrm{Figure 1.}$$

Now we are in position to prove Theorem~\ref{Connected}.
\begin{proof}[Proof of Theorem~\ref{Connected}]
In view of Lemma~\ref{NotEmpty}, we may assume that $X$ is closed. 
Let $\ell$ be any ray in $\mathcal E$ with the unit direction $v$. It is not hard to show that $\mathcal N_\ell$ is a closed definable set, so we leave it to the reader. 
Note that if $\mathcal N_\ell$ is connected, then it is path connected and so is of positive dimension as $\#(\mathcal N_\ell)>1$, so it remains to prove the connectedness. 
	Let $X_i=X_i(\delta)$  with $i=1,\dots,m,\ \delta\in(0,\delta_0)$, be the connected components of $(X\setminus\{0\})\cap N_\delta(\ell)\cap\mathring{\mathbb B}_{h(\delta)}^{n}$, where $\delta_0$ and $h(\delta)$ are the constants in Lemma~\ref{Stable}. 
Then it is not hard to see that $\ell\subset C_0(X_i)$ for any $i$. Let $\mathcal N_\ell(X_i)$ be the Nash fiber of $X_i$ over $0$ along $\ell$. 
Obviously $$\mathcal N_\ell=\bigcup_{i=1}^m\mathcal N_\ell(X_i)$$
in light of Lemma~\ref{Stable}(iv). 
Furthermore, $\mathcal N_\ell(X_i)$ is connected in view of Lemma~\ref{RegularConnected}. Hence, to prove that $\mathcal N_\ell$ is connected, it is enough to show that 
$$\mathcal N_\ell(X_i)\cap \mathcal N_\ell(X_j)\ne\emptyset\ \text{ for }\ i\ne j.$$ 

In order to do this, let $Y,Z\in\{X_i,\ i=1,\dots,m\}$ with $Y\ne Z$. 
Shrinking $\delta$ if necessary so that $\delta<1.$
By the construction, for $0<r<\frac{h(\delta)}{2}$, we have $\mathring{\mathbb B}_{r\delta}^{n}(rv)\subset N_\delta(\ell)\cap \mathring{\mathbb B}_{h(\delta)}^{n}\setminus\{0\}$, recall that $\mathring{\mathbb B}_{r\delta}^{n}(rv)$ is the open ball of radius $r\delta$ centered at $rv$. 
Set 
$$Y_r=Y\cap \mathring{\mathbb B}_{r\delta}^{n}(rv)\ \text{ and }\ Z_r=Z\cap \mathring{\mathbb B}_{r\delta}^{n}(rv).$$ 
Clearly $Y_r$ and $Z_r$ are non singular hypersurfaces in view of Lemma~\ref{Stable}(iii). 
Hence we can define the following function $\theta$ given by
$$\theta(r):=\inf_{y\in Y_r,z\in Z_r} \sin\angle(T_y Y_r,T_z Z_r)=\max\{\|u-\pi_{T_y Y_r}(u)\|:\ u\in T_z Z_r\cap\mathbb S^{n-1}\},$$
recall that $\pi_{T_y Y_r}$ is the orthogonal projection on ${T_y Y_r}.$
Then $\theta$ is a non negative definable function. Assume that we have proved
\begin{eqnarray} \label{PT5}
\lim_{r \to 0^+} \theta(r) = 0.
\end{eqnarray}
This, of course, implies that $\mathcal N_\ell(Y)\cap \mathcal N_\ell(Z)\ne\emptyset$ by shrinking $\delta$ to 0. So it remains to prove~\eqref{PT5}. For contradiction, suppose that $\theta(r)\not\to 0$. Then there are some positive constants $\displaystyle\varepsilon<\frac{h(\delta)}{2}$ and $\displaystyle\theta_0<\frac{\pi}{2}$ such that $\theta(r)\geqslant\theta_0$ for $r\in(0,\varepsilon).$

On the other hand, since $\ell$ is a ray in $C_0(Y)$ and $C_0(Z)$, by Lemma~\ref{ContainRay}, shrinking $\varepsilon$ if necessary, there are two $C^1$ definable curves $y\colon(0,\varepsilon)\to Y$ and $z\colon(0,\varepsilon)\to Z$ such that 
$$\|y(r)\|=\|z(r)\|=r,\ \ \left\|\frac{y(r)}{\|y(r)\|}-v\right\|\to 0\ \ \text{and}\ \ \left\|\frac{z(r)}{\|z(r)\|}-v\right\|\to 0 \ \text{ as } \ r\to 0^+.$$
So 
$$\frac{\|y(r)-rv\|}{r}=\left\|\frac{y(r)}{\|y(r)\|}-v\right\|\to 0 \ \ \text{and}\ \ \frac{\|z(r)-rv\|}{r}=\left\|\frac{z(r)}{\|z(r)\|}-v\right\|\to 0 \ \text{ as } \ r\to 0^+.$$
Consequently, $y(r)\in Y_r$ and $z(r)\in Z_r$ for any $r>0$ small enough. Fix $r>0$ such that \begin{equation}\label{F3}\displaystyle\frac{\|y(r)-rv\|}{r}<\frac{\delta\theta_0}{4\sqrt{2}}\ \quad \text{and}\ \quad \frac{\|z(r)-rv\|}{r}<\frac{\delta\theta_0}{4\sqrt{2}}.
\end{equation}
Let  
$$\gamma\colon[0,t_0)\to Y_r\times Z_r,\ t\mapsto(\alpha(t),\beta(t))$$ 
be the maximal trajectory of the vector field $-\frac{\nabla\rho}{\|\nabla\rho\|^2}$ on ${Y_r\times Z_r}$ with the initial condition $\gamma(0)=(y(r),z(r))$, where $\rho$ is the restriction of the function $\widetilde\rho$, defined in Lemma~\ref{Key}, on $Y_r\times Z_r$. The following facts are easy to verify: 
\begin{itemize}
\item [(a)] $\rho(\gamma(t))=\|y(r)-z(r)\|-t$ for $t\in[0,t_0)$ and so $t_0<\|y(r)-z(r)\|$; and
\item [(b)] $\displaystyle \|\nabla\rho(\gamma(s))\|\geqslant\frac{\theta_0}{\sqrt{2}}$ for $t\in[0,t_0)$. (This follows from Lemma~\ref{Key}(ii) and the assumption $\theta(r) \geqslant \theta_0>0.$)
\end{itemize}
Moreover we have
$$\begin{array}{llllllll}
\text{length}(\gamma) & = & \displaystyle\int_0^{t_0}\|\dot{\gamma}(t)\|dt & = & \displaystyle \int_0^{t_0}\frac{1}{\|\nabla\rho(\gamma(t))\|}dt\\
& \leqslant & \displaystyle\int_0^{t_0}\frac{\sqrt{2}}{\theta_0}dt & =& \displaystyle\frac{\sqrt{2}t_0}{\theta_0}\\
&<&\displaystyle\frac{\sqrt{2}\|y(r)-z(r)\|}{\theta_0} &\leqslant & \sqrt{2}\displaystyle\frac{\|y(r)-rv\|+\|z(r)-rv\|}{\theta_0}.
\end{array}$$
This, together with~\eqref{F3}, yields $\text{length}(\gamma)<\displaystyle\frac{r\delta}{2},$ which implies that the limit 
$$(\alpha_{t_0},\beta_{t_0})=\gamma_{t_0}:=\lim_{t\to t_0}\gamma(t)$$ exists. 
Clearly $\gamma_{t_0}\in \overline Y_r\times \overline Z_r$. 
In addition, we have
$$\begin{array}{llllllll}
\|\alpha_{t_0}-rv\|\leqslant\|\alpha_{t_0}-y(r)\|+\|y(r)-rv\|&\leqslant&\text{length}(\gamma)+\|y(r)-rv\| \\
&<&\displaystyle\frac{r\delta}{2}+\frac{r\delta\theta_0}{4\sqrt{2}}<r\delta.
\end{array}$$
Similarly, we also have $\|\beta_{t_0}-rv\|<r\delta.$ 
Furthermore, as $X$ is closed, it follows that 
$$\overline Y_r\setminus Y_r\subset {\mathbb S}_{r\delta}^{n-1}(rv)\ \text{ and }\ \overline Z_r\setminus Z_r\subset {\mathbb S}_{r\delta}^{n-1}(rv).$$
Therefore $\gamma_{t_0}\in Y_r\times Z_r$. 
Recall that $\gamma$ is maximal, so $\gamma_{t_0}$ is a critical point of $\rho.$ 
In light of Lemma~\ref{Key}, $\angle(T_{\alpha_{t_0}}Y_r,T_{\beta_{t_0}}Z_r)=0$. Consequently, $\theta(r)=0$ which contradicts the assumption. Therefore, \eqref{PT5} must hold and the theorem follows.
\end{proof} 

Let us make some preparation before proving Theorem~\ref{Singular}. First of all, we need the following technical lemma. 

\begin{lemma}\label{2sides}
Let $Y\subset \mathbb R^n$ be a definable set and $v\in Y.$ 
Assume that $\ell^*$ is a ray not in $C_vY$. 
Then, for any $R>0$, there exists a positive constant $\delta_R\leqslant\frac{1}{R}$ such that: 
\begin{enumerate}[{\rm (i)}]
\item for any $w\in C_v Y$ with $\|w\|\leqslant \delta_R$, we have $\displaystyle\dist(v+w,Y)<\frac{\|w\|}{R};$ 
\item for any $x\in Y$ with $\|x-v\|\leqslant \delta_R$, we have $\displaystyle\dist(x-v,C_v Y)<\frac{\|x-v\|}{R};$ 
\item for all $t\in(0,\delta_R)$, we have 
$$t\left(\sin\theta-\frac{1}{R}\right)<\dist(v+tp,Y)<t\left(\sin\theta+\frac{1}{R}\right),$$ 
where $p$ is the unit direction in $\ell^*$ and 
\begin{equation}\label{F3.1}
\theta:=\min\left\{\frac{\pi}{2},\min_{\ell'\subset C_v Y}\widehat{{\ell^*},\ell'}\right\}>0,
\end{equation} 
recall that $\widehat{{\ell^*},\ell'}$ is the angle between the rays $\ell^*$ and $\ell'$. 
\end{enumerate}
\end{lemma}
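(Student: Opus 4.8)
The plan is first to translate so that $v=0$, then to extract all three estimates from a single quantitative statement about how fast $Y$ approaches its tangent cone along a fixed direction. Two elementary facts set the stage. Since $\ell^*\not\subset C_vY$ and, by Lemma~\ref{Lemma32}, $C_vY$ is a closed cone containing $v$, after the translation the unit vector $p$ does not lie in the compact set $C_0Y\cap\mathbb S^{n-1}$, so $\theta>0$; this is the positivity claimed in~\eqref{F3.1}. Moreover, a direct computation (distinguishing whether the minimal angle $\theta$ is $<\pi/2$ or $=\pi/2$, and using that $0\in C_0Y$) gives the identity
$$\dist(p,C_0Y)=\sin\theta,$$
the nearest point of $C_0Y$ to $p$ being the foot of the perpendicular from $p$ onto the ray of $C_0Y$ realizing $\theta$, or the apex $0$ when $\theta=\pi/2$. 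With this, statement (iii) is exactly the assertion that $\dist(tp,Y)/t\to\dist(p,C_0Y)$ as $t\to0^+$, with the convergence quantified by $1/R$.

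The analytic heart of the argument, which I would prove as a preliminary lemma, is: \emph{for every $c\in C_0Y$ we have $\dist(tc,Y)=o(t)$ as $t\to 0^+$.} The proof is short: $t\mapsto\dist(tc,Y)/t$ is a definable function of one variable, so it has a limit in $[0,+\infty]$ as $t\to 0^+$; and the definition of $C_0Y$ furnishes $x_k\in Y$ with $x_k\to 0$ and scalars $s_k\to+\infty$ with $s_kx_k\to c$, so along $t_k:=1/s_k\to 0^+$ we get $\dist(t_kc,Y)/t_k\le\|c-s_kx_k\|\to 0$; hence the limit is $0$. This is where o-minimality is genuinely needed, and I expect it to be the only real obstacle in the proof: without tameness the ratio $\dist(tc,Y)/t$ could tend to $0$ only along a subsequence of radii, and the desired uniform-in-$t$ estimates would fail; tameness is precisely what forbids such oscillation.

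Granting this, the three items follow by compactness and the triangle inequality. For (i): if no $\delta$ works, then for each $k$ there is a nonzero $w_k\in C_0Y$ with $\|w_k\|\le 1/k$ and $\dist(w_k,Y)\ge\|w_k\|/R$; writing $w_k=t_kc_k$ with $t_k=\|w_k\|$ and $c_k\in C_0Y\cap\mathbb S^{n-1}$, and passing to a subsequence with $c_k\to c_*\in C_0Y$, one has $\dist(w_k,Y)/t_k\le \dist(t_kc_*,Y)/t_k+\|c_k-c_*\|\to 0$ (the first term by the preliminary lemma applied to $c_*$), contradicting $\ge 1/R$. For (ii): if no $\delta$ works, there are $x_k\in Y$ with $0<\|x_k\|\le 1/k$ and $\dist(x_k,C_0Y)\ge\|x_k\|/R$; the unit vectors $a_k:=x_k/\|x_k\|$ subconverge to some $a_*\in C_0Y$ (by definition of the tangent cone), whence $\dist(x_k,C_0Y)/\|x_k\|=\dist(a_k,C_0Y)\le\|a_k-a_*\|\to 0$ (using that $C_0Y$ is a cone), a contradiction. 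The upper bound in (iii) follows from $\dist(tp,Y)\le\|tp-tc^*\|+\dist(tc^*,Y)=t\sin\theta+\dist(tc^*,Y)$, where $c^*\in C_0Y$ realizes $\dist(p,C_0Y)$, the last term being $<t/R$ for small $t$ by the preliminary lemma. For the lower bound, when $\sin\theta\le 1/R$ it is immediate because $\dist(tp,Y)>0$ for all small $t$ (otherwise $p\in C_0Y$); when $\sin\theta>1/R$, if $\dist(t_kp,Y)\le t_k(\sin\theta-1/R)$ along some $t_k\to0^+$, choosing $y_k\in Y$ with $\|t_kp-y_k\|\le t_k(\sin\theta-1/R)$ one finds $y_k\to0$, $y_k\ne0$, and a subsequential limit $q$ of $y_k/t_k$ lies in $C_0Y$ with $\|p-q\|\le\sin\theta-1/R<\sin\theta=\dist(p,C_0Y)$ — absurd.

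Finally I would set $\delta_R$ equal to the minimum of the thresholds produced in (i)--(iii) together with $1/R$; the degenerate cases $w=0$ in (i) and $x=v$ in (ii) are vacuous. Everything past the preliminary lemma is bookkeeping, so the plan stands or falls with that lemma and hence with o-minimality.
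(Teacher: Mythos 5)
Your proof is correct and differs from the paper's in two places, both, as it happens, for the better. For item (i) the paper applies curve selection twice --- once to produce a definable arc in the bad set $\{w\in C_vY\setminus\{0\}:\ \dist(v+w,Y)\geqslant\|w\|/R\}$ and once to produce a companion arc in $Y$ with the same limit direction --- and concludes by a triangle-inequality estimate; you instead isolate the tameness into your preliminary lemma (the one-variable definable function $t\mapsto\dist(tc,Y)/t$ has a limit at $0^+$ by the monotonicity theorem, and that limit must be $0$ by the definition of $C_vY$) and then obtain uniformity over $C_vY\cap\mathbb S^{n-1}$ by compactness together with the Lipschitz bound $|\dist(tc,Y)-\dist(tc',Y)|\leqslant t\|c-c'\|$. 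Both are legitimate; yours makes the o-minimal input maximally transparent. For the lower bound of item (iii), the paper deduces it from item (i) via the asserted inequality $\|tp-\widetilde w\|-\dist(v+\widetilde w,Y)\leqslant\dist(v+tp,Y)$, where $\widetilde w$ realizes $\dist(tp,C_vY)$; unlike its right-hand counterpart this is \emph{not} an instance of the triangle inequality (the reverse triangle inequality gives $\dist(v+\widetilde w,Y)-\|tp-\widetilde w\|\leqslant\dist(v+tp,Y)$, with the two terms swapped), and it can fail: for $Y=\{(s,s^{3/2}):\ s\geqslant 0\}\subset\mathbb R^2$, $v=0$, $p=(0,1)$, one has $\theta=\pi/2$, $\widetilde w=0$, $\dist(v+\widetilde w,Y)=0$, $\|tp-\widetilde w\|=t$, yet $\dist(tp,Y)<t$ for all small $t>0$, since $s^2+(s^{3/2}-t)^2$ attains its minimum over $s\geqslant 0$ at $s\approx(3t/2)^2>0$, where the value is strictly less than $t^2$. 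Your direct sequence-and-contradiction argument for the lower bound sidesteps that step and proves exactly what the lemma states, so it is the more careful of the two. Two small points to tidy: since $Y$ need not be closed, choose $y_k\in Y$ with $\|t_kp-y_k\|\leqslant\dist(t_kp,Y)+t_k/k$ rather than exactly $\leqslant t_k(\sin\theta-1/R)$; a subsequential limit $q$ of $y_k/t_k$ still lies in $C_vY$ and satisfies $\|p-q\|\leqslant\sin\theta-1/R<\sin\theta$, giving the same contradiction. And to justify $\dist(tp,Y)>0$ for all small $t$ (used when $\sin\theta\leqslant 1/R$), note that $\{t>0:\ tp\in\overline Y\}$ is definable, so if $0$ were in its closure one could extract $t_k\to 0^+$ with $t_kp\in\overline Y$, which forces $p\in C_0\overline Y=C_0Y$, a contradiction.
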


\begin{proof} 
(i) Suppose for contradiction that there is $R>0$ such that for any integer $k>0$, there is $w^k\in C_v Y$ with $\displaystyle\|w^k\|\leqslant\frac{1}{k}$ such that $\displaystyle\dist(v+w^k,Y)\geqslant \frac{\|w^k\|}{R}.$ 
Taking a subsequence if necessary, we may suppose that the sequence $\displaystyle\frac{w^k}{\|w^k\|}$ converges to a limit $w$. 
By Lemma~\ref{ContainRay}, there is a $C^1$ definable curve $\gamma:(0,\varepsilon)\to C_v Y\setminus\{0\}$ such that:
\begin{itemize}
\item [(a)] $\|\gamma(t)\|=t$ for $t\in(0,\varepsilon)$;
\item [(b)] $\displaystyle\dist(v+\gamma(t),Y)\geqslant \frac{\|\gamma(t)\|}{R}=\frac{t}{R}$ for $t\in(0,\varepsilon)$; and 
\item [(c)] $\displaystyle\lim_{t\to 0^+}\frac{\gamma(t)}{\|\gamma(t)\|}=w.$
\end{itemize}
Evidently $w\in C_v Y$. So by the definition of tangent cone and by Lemma~\ref{ContainRay}, there is a $C^1$ definable curve $\alpha:(0,\varepsilon')\to Y \setminus\{v\}$ such that:
\begin{itemize}
\item [(d)] $\|\alpha(t)-v\|=t$ for $t\in(0,\varepsilon')$; and
\item [(e)] $\displaystyle\lim_{t\to 0^+}\frac{\alpha(t)-v}{\|\alpha(t)-v\|}=w.$
\end{itemize}
Now we have
$$\begin{array}{lll}
\dist(v+\gamma(t),Y)&\leqslant&\|v+\gamma(t)-\alpha(t)\|\\ 
&\leqslant&\|\gamma(t)-tw\|+\|v-\alpha(t)+tw\|\\
&=&\displaystyle t\left(\left\|\frac{\gamma(t)}{\|\gamma(t)\|}-w\right\|+\left\|\frac{\alpha(t)-v}{\|\alpha(t)-v\|}-w\right\|\right).
\end{array}$$
This, together with Item (c) and Item (e), implies that $\displaystyle\lim_{t\to 0^+}\frac{\dist(v+\gamma(t),Y)}{t}=0$ which contradicts Item (b). Hence, Item (i) follows.

(ii) By contradiction, suppose that there is $R>0$ such that for any integer $k>0$, there is $x^k\in Y$ with $\displaystyle\|x^k-v\|\leqslant\frac{1}{k}$ such that $\displaystyle\dist(x^k-v,C_v Y)\geqslant \frac{\|x^k-v\|}{R},$ i.e., 
$$\dist\left(\frac{x^k-v}{\|x^k-v\|},C_v Y\right)\geqslant \frac{1}{R}.$$ 
Taking a subsequence if necessary, we may suppose that the sequence $\displaystyle\frac{x^k-v}{\|x^k-v\|}$ converges to a limit $w$. 
Then, clearly, $w\in C_v Y$ on one hand and we have $\displaystyle\dist(w,C_vY)\geqslant \frac{1}{R}$ on the other hand. 
This is a contradiction and Item (ii) follows.

(iii) By Item (i), for each $R$, there exists $\delta_R>0$ such that for any $w\in C_v Y$ with $\|w\|\leqslant \delta_R$, we have $\displaystyle\dist(v+w,Y)<\frac{\|w\|}{R}.$ 
Since $C_vY$ is closed, there is $\widetilde w\in C_v Y$ such that the distance function $\dist(tp,C_v Y)$ is attained, i.e., 
$$\dist(tp,C_v Y)=\|tp-\widetilde w\|=t\sin\theta.$$ 
Then clearly $\|\widetilde w\|=t\cos\theta<t<\delta_R$. 
In addition,  
$$\|tp-\widetilde w\|-\dist(v+\widetilde w,Y)\leqslant\dist(v+tp,Y)\leqslant\|tp-\widetilde w\|+\dist(v+\widetilde w,Y).$$
Thus, in view of Item (i), we get
$$t\sin\theta-\frac{t\cos\theta}{R}\leqslant\dist(v+tp,Y)\leqslant t\sin\theta+\frac{t\cos\theta}{R},$$
which implies Item (iii).
\end{proof}

The following lemma is the key to prove Theorem~\ref{Singular}.
\begin{lemma}\label{perpendicular} Let $Y\subset \mathbb R^n$ be a closed definable set and $v\in Y$. 
Let $Y_k$ be a sequence of closed definable sets of pure dimension $d>0$ for all integer $k>0$ such that 
$$Y=\lim_{k\to +\infty}Y_k\ \text{ and }\ v\not\in \lim_{k\to +\infty}(Y_k)_{sing}.$$ 
Assume that $y^k\in Y_{k}\setminus (Y_k)_{sing}$ is a sequence tending to $v$ such that the limit $\displaystyle Q:=\lim_{k\to +\infty}T_{y^k}Y_{k}$ exists and $C_vY\subsetneq Q.$
Then there is a subsequence $\{k_1,k_2,k_3\dots\}$ of the sequence $\{1,2,3,\dots\}$ and a sequence $x^{l}\in Y_{k_l}\setminus (Y_{k_l})_{sing}$ tending to $v$ such that the limit $\displaystyle P:=\lim_{l\to +\infty}T_{x^{l}}Y_{{k_l}}$ exists and $\displaystyle\angle(P,Q)=\frac{\pi}{2}.$

\end{lemma}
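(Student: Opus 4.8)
The strategy is to produce the plane $P$ as a limit of tangent planes of the $Y_k$ taken at the feet of perpendiculars dropped onto $Y_k$ from points lying slightly off $Y$ in a direction belonging to $Q$. To set up, note that $Q$ is a $d$-plane (a limit of the $d$-planes $T_{y^k}Y_k$) and $C_vY$ is a closed cone properly contained in $Q$, so I can fix a unit vector $p\in Q\setminus C_vY$; then $\ell^*:=\mathbb R_+p$ is a ray not contained in $C_vY$, and the angle $\theta$ of~\eqref{F3.1} is positive with $\sin\theta=\dist(p,C_vY)$. Since $v\notin\lim_k(Y_k)_{sing}$, fix $\varepsilon_0>0$ and $k_0$ with $(Y_k)_{sing}\cap\mathring{\mathbb B}^n_{\varepsilon_0}(v)=\emptyset$ for $k\ge k_0$. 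For each integer $m\ge1$ apply Lemma~\ref{2sides}(iii) to $Y$ and $\ell^*$, obtaining $\delta_m>0$; choose $t_m\in(0,\delta_m)$ with $t_m\to0$ and $t_m$ so small that any point within distance $(\sin\theta+2)t_m$ of $v$ lies in $\mathring{\mathbb B}^n_{\varepsilon_0}(v)$, and put $a_m:=v+t_mp$, so that $t_m(\sin\theta-\tfrac1m)<\dist(a_m,Y)<t_m(\sin\theta+\tfrac1m)$.

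For fixed $m$, Hausdorff convergence gives $\dist(a_m,Y_k)\to\dist(a_m,Y)$ as $k\to+\infty$. Pick a nearest point $x_{m,k}\in Y_k$ of $a_m$ (it exists as $Y_k$ is closed); along a subsequence in $k$, on which $k\ge k_0$ and $x_{m,k}\in\mathring{\mathbb B}^n_{\varepsilon_0}(v)$ --- hence $x_{m,k}\in Y_k\setminus(Y_k)_{sing}$ --- I may further assume $x_{m,k}\to\bar x_m\in Y$, necessarily a nearest point of $Y$ to $a_m$, and $T_{x_{m,k}}Y_k\to P_m$ in $\mathbb G(d,n)$. The first-order condition $a_m-x_{m,k}\perp T_{x_{m,k}}Y_k$ passes to the limit, so $a_m-\bar x_m\perp P_m$. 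Now let $m\to+\infty$: the vectors $(\bar x_m-v)/t_m$ are bounded, so after a further subsequence $(\bar x_m-v)/t_m\to w^*\in C_vY$, and $\|p-w^*\|=\lim_m\dist(a_m,Y)/t_m=\sin\theta$. This is the crucial point: because $C_vY\subseteq Q$ and $p\in Q$, the nonzero vector $p-w^*$ lies in $Q$, hence the unit vectors $n_m:=(a_m-\bar x_m)/\|a_m-\bar x_m\|=(p-(\bar x_m-v)/t_m)/\|p-(\bar x_m-v)/t_m\|$ converge to $q:=(p-w^*)/\sin\theta\in Q\cap\mathbb S^{n-1}$. After one more subsequence with $P_m\to P$, the relations $n_m\perp P_m$ give $q\perp P$; since $q\in Q$ and $\dim P=\dim Q=d$, the definition of the angle forces $\angle(P,Q)=\tfrac{\pi}{2}$.

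Finally I assemble the sequences diagonally. Enumerate the retained $m$-indices as $m_1<m_2<\cdots$; for each $l$ pick $k_l$ from the $k$-subsequence attached to $m_l$, with $k_l>k_{l-1}$, large enough that $x^l:=x_{m_l,k_l}$ satisfies $\|x^l-\bar x_{m_l}\|<\tfrac1l$, $\angle(T_{x^l}Y_{k_l},P_{m_l})<\tfrac1l$ and $\dist_{\mathcal H}(Y_{k_l},Y)<t_{m_l}/m_l$. Then $\|x^l-v\|\le\dist(a_{m_l},Y_{k_l})+t_{m_l}\to0$, so $x^l\to v$, $x^l\in Y_{k_l}\setminus(Y_{k_l})_{sing}$ for $l$ large, $T_{x^l}Y_{k_l}\to P$, and $\{k_l\}$ is the required subsequence. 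I expect the main difficulty to be the careful bookkeeping of this triple passage to the limit --- the off-set points $a_m$ tending to $v$, the feet on $Y_k$ approximating those on $Y$, and the Grassmannian limits of tangent planes --- the genuinely essential step being the verification that the limiting normal $q$ lies in $Q$, which is exactly where the hypothesis $C_vY\subseteq Q$ is used.
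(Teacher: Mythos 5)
Your proposal is correct and uses the same underlying geometric idea as the paper: offset $v$ along a ray $\ell^*\subset Q\setminus C_vY$, drop a perpendicular onto $Y_k$, exploit the first-order condition that the displacement is normal to the tangent plane, and use $C_vY\subset Q$ together with $p\in Q$ to place the limiting normal direction in $Q$. The only real difference is organizational: you take an inner limit in $k$ (obtaining a foot $\bar x_m$ on $Y$ and a limit plane $P_m$) and then an outer limit in $m$ before assembling the diagonal, whereas the paper performs a single diagonal directly on $Y_{k_l}$ (choosing $k_l$ so that $\dist_{\mathcal H}(Y_{k_l},Y)<\delta_l/l$) and locates the displacement direction in $Q$ by projecting the foot $x^l$ onto $\{v\}+Q$ and bounding the projection error via Lemma~\ref{2sides}(ii); both routes lead to the same perpendicularity conclusion.
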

\begin{proof}
The construction in the proof is described in Figure 2 below. 

\begin{tikzpicture}

\draw [blue] (-8,10) -- (6,10);
\draw [blue] (-8,10) -- (-5,16);
\draw [line width=1.1pt,red,dashed] (6,12) arc (0:180:6 and 2.1);
\draw [line width=1.1pt,red] (6,12) arc (0:-30.5:6 and 2);
\draw [line width=1.1pt,red] (-6,12) arc (-180:-114.5:6 and 2);
\draw [line width=1.1pt,red] (0,12) arc (-25:-61:6 and 4);
\draw [line width=1.1pt,red] (0,12) arc (-133:-79.5:6 and 4);

\draw [violet,dashed] (6,12) arc (0:180:6 and 2);
\draw [violet] (6,12) arc (0:-60:6 and 2);
\draw [violet] (-6,12) arc (-180:-104.5:6 and 2);
\draw [violet] (-1.5,10.0635) -- (0,12);
\draw [violet] (3,10.268) -- (0,12);

\draw [orange,dashed] (0,14.2) arc (90:177:6 and 2);
\draw [orange,dashed] (0,14.2) arc (90:3:6 and 2);
\draw [orange] (0,14.5) arc (90:175:6 and 2);
\draw [orange] (0,14.5) arc (90:5:6 and 2);
\draw [orange] (6,12.3) arc (0:-30:6 and 2);
\draw [orange] (5.97,12.7) arc (0:-32.2:6 and 2);
\draw [orange] (-6,12.3) arc (-180:-120:6 and 2);
\draw [orange] (-5.98,12.7) arc (-180:-119.7:6 and 2);
\draw [orange] (-3,10.571) arc (-95:85:0.2);  
\draw [orange] (5.05,11.64) arc (110:300:0.187); 
\draw [orange] (-2.95,10.97) arc (-61:-30:7 and 4);
\draw [orange] (-2.8,10.77) arc (-61:-30:7 and 4);
\draw [orange] (-2.95,10.57) arc (-62:-29:7 and 4);
\draw [orange] (5.05,11.64) arc (-80:-128:6 and 4);
\draw [orange] (4.92,11.446) arc (-80:-128:6 and 4);
\draw [orange] (5.13,11.28) arc (-79.2:-129:6 and 4);
\draw [orange] (0.32,12.42) arc (45:127:0.46);
\draw [orange] (0.19,12.23) arc (45:125:0.26);
\draw [orange] (0.24,12.09) arc (50:110:0.37);

\node [red,below] at (0,14) {$Y$};
\node [violet] at (4.4,10.2) {$C_v Y$};
\node [right] at (-0.1,11.7) {$v$};
\node [below,blue] at (-7,10) {$\{v\}+Q$};
\node [right,blue] at (0,7) {$\ell^*$};
\node [right,blue] at (0,8) {$p$};
\node [left] at (0.1,11.4) {$\theta$};
\node [orange,above] at (0,14.5) {$Y_{k_l}$};
\node [right] at (0,10.5) {$v+t_lp$};
\node [below] at (-4.2,6.5) {$q^l$};
\node [above] at (-4.5,17) {$x^{l}$};
\node [below] at (-4.7,7) {$\overline x^{l}$};

\draw (0,11.6) arc (-90:-120:0.4);
\draw [blue] (0,12) -- (0,10);
\draw [blue] (0,10) -- (0,6.7);
\draw [blue,->] (0,10) -- (0,8);
\draw [violet] (0,10.5) -- (-1.1,10.9);
\draw [orange] (0,10.5) -- (-1.4,11.3);
\filldraw (-1.1,10.9) circle (1.5pt);
\filldraw (-1.4,11.3) circle (1.5pt);
\filldraw (-1.45,11) circle (1.5pt);
\filldraw (0,10.5) circle (1.5pt);
\filldraw (0,12) circle (1.5pt);
\draw [->] (-4,6.5) -- (-1.15,10.8) ;
\draw [->] (-4.5,17) -- (-1.45,11.4) ;
\draw [->] (-4.5,7) -- (-1.5,10.9) ;

\node at (-0.5,5.5) {$Y$ is the region with red boundary};
\node at (-0.5,5) {$C_v Y$ is the region with violet boundary};
\end{tikzpicture}
$$\textrm{Figure 2.}$$

By the assumption, there exists a ray $\ell^*$ in $Q$ which is not a ray in $C_vY$. 
Let $\theta>0$ be the constant given by~\eqref{F3.1}. 
For each integer $l>0$, let $\delta_l\in(0,\frac{1}{l}]$ be the constant given by Lemma~\ref{2sides}. By the assumption $\displaystyle\lim_{k\to +\infty}Y_k=Y$, for each integer $l>0$, there is an integer $k_l>0$ 
such that 
\begin{equation}\label{F5}
\dist_{\mathcal H} (Y_{k_l},Y)<\frac{\delta_l}{l},
\end{equation}
where $\dist_{\mathcal H}(\cdot,\cdot)$ still denotes the Hausdorff distance. 
Let $\displaystyle t_l=\frac{\delta_l}{3}$. Then for $l$ large enough, we have
\begin{equation}\label{F6}
\begin{array}{lllll}
\dist(v+t_lp,Y_{k_l})&\geqslant&\dist(v+t_lp,Y)-\dist(Y,Y_{k_l})\\
&>&\displaystyle t_l\left(\sin\theta-\frac{1}{l}\right)-\frac{\delta_l}{l}\\
&=&\displaystyle t_l\left(\sin\theta-\frac{1}{l}\right)-\frac{3t_l}{l}
=\displaystyle t_l\left(\sin\theta-\frac{4}{l}\right)>0, 
\end{array}
\end{equation}
where the second inequality follows from Lemma~\ref{2sides}(iii) and~\eqref{F5}. Hence $v+t_lp\not\in  Y_{k_l}$. 
By the closedness of $Y$ and $Y_{k_l}$, let $x^{l}\in  Y_{k_l}$ and $q^{l}\in Y$ be respectively points where the distance functions $\dist(v+t_l p, Y_{k_l})$ and $\dist(v+t_lp,Y)$ are attained. 
Then for $l$ large enough, 
we have
\begin{equation}\label{F6-}
\begin{array}{lllll}
\|x^{l}-v\|&\leqslant&\|x^{l}-v-t_l p\|+t_l\\
&=&\dist(v+t_l p,Y_{k_l})+t_l\\
&\leqslant&\dist(v+t_l p,q^{l})+\dist(q^{l},Y_{k_l})+t_l\\
&\leqslant&\displaystyle\dist(v+t_l p,Y)+\sup_{x\in Y}\dist(x,Y_{k_l})+t_l\\ 
&\leqslant&\dist(v+t_l p,Y)+\dist_{\mathcal H}(Y,Y_{k_l})+t_l\\ 
&<&\displaystyle\frac{\delta_l}{3}\left(\sin\theta+\frac{1}{l}\right)+\frac{\delta_l}{l}+\frac{\delta_l}{3}\leqslant\delta_l\left(\frac{2}{3}+\frac{4}{3l}\right) <\delta_l\leqslant\frac{1}{l}, 
\end{array}
\end{equation}
where the fifth inequality follows from Lemma~\ref{2sides}(iii) and~\eqref{F5}.
Consequently \begin{equation}\label{xv}\displaystyle\lim_{l\to+\infty}x^{l}=v.\end{equation}

Let $z^l\in Y$ be such that $$\dist(x^{l},Y)=\|x^{l}-z^l\|.$$ 
Obviously $\dist(x^{l},Y)\leqslant\|x^{l}-v\|$, so in view of~\eqref{F6-}, we have
\begin{equation}\label{F6=}
\|z^l-v\|\leqslant\|z^l-x^{l}\|+\|x^{l}-v\|\leqslant 2\|x^{l}-v\|<2\delta_l.
\end{equation}
Then 
\begin{equation}\label{F6-=}
\begin{array}{lllll}
\dist(x^{l},\{v\}+Q)&\leqslant&\dist(x^{l},\{v\}+C_v Y)\\
&\leqslant&\dist(x^{l},Y)+\dist(z^l,\{v\}+C_v Y)\\
&=&\dist(x^{l},Y)+\dist(z^l-v,C_v Y)\\
&\leqslant&\dist_{\mathcal H} (Y_{k_l},Y)+\dist(z^l-v,C_v Y)\\
&\leqslant&\displaystyle\frac{\delta_l}{l}+\frac{\|z^l-v\|}{l}<\frac{3\delta_l}{l},
\end{array}
\end{equation}
where the first inequality follows from the assumption $C_v Y\subset Q$, the forth inequality follows from Lemma~\ref{2sides}(ii) and~\eqref{F5} while the fifth one follows from~\eqref{F6=}. 
Let $\overline x^{l}$ be the orthogonal projection of $x^{l}$ on $\{v\}+Q$, then by~\eqref{F6} and~\eqref{F6-=}, we have
\begin{equation}\label{F7}
\frac{\|x^{l}-\overline x^{l}\|}{\|v+t_l p-x^{l}\|}<\frac{3\delta_l}{l t_l\Big(\sin\theta-\frac{4}{l}\Big)}=\frac{9}{l\Big(\sin\theta-\frac{4}{l}\Big)}\to 0\ \text{ as }\ l\to+\infty. 
\end{equation}
By taking a subsequence if necessary, we may assume that the sequence $\displaystyle\frac{v+t_l p-x^{l}}{\|v+t_l p-x^{l}\|}$ converges to a limit $w$. 
Then from~\eqref{F7}, it is not hard to check that $$\lim_{l\to+\infty}\frac{v+t_l p-\overline x^{l}}{\|v+t_l p-\overline x^{l}\|}=w.$$ 
Since $v+t_l p,\overline x^{l}\in \{v\}+ Q,$ we have
$$\angle(v+t_l p-\overline x^{l},Q)=\angle(v+t_l p-\overline x^{l},\{v\}+Q)=0.$$ 
Hence $\angle(w, Q)=0$. 
By the assumption $\displaystyle v\not\in \lim_{l\to +\infty}(Y_{k_l})_{sing}$ and~\eqref{xv}, it is clear that $x^{l}$ is not a singular point of $Y_{k_l}$ for $l$ large enough. 
Taking a subsequence if necessary, we can assume that there exists the limit $$P:=\lim_{l\to+\infty}T_{x^{l}}Y_{k_l}.$$ 
Observe that $v+t_l p- x^{l}$ is perpendicular to $T_{x^{l}}Y_{k_l}$ as $x^{l}$ is a point where the distance function $\dist(v+t_l p, Y_{k_l})$ is attained, so by taking limit as $l\to+\infty$, we get $\displaystyle\angle(w,P)=\frac{\pi}{2}$. 
Hence $\displaystyle\angle(P, Q)=\frac{\pi}{2}$ and the lemma follows. 
\end{proof}

We finish the section by giving the proof of Theorem \ref{Singular}.

\begin{proof}[Proof of Theorem \ref{Singular}] 
Let $Q\in\mathbb G(d,n)$ be a tangent limit of $X$ along $\ell$. There are two cases to be considered. 

\subsubsection*{Case $1$: $C_v\mathcal C\not\subset Q$, i.e., $C_v\mathcal C\setminus Q\ne \emptyset$} 

Let $\widetilde\ell$ be a ray in $C_v\mathcal C\setminus Q$.
As $\mathcal C_{sing}$ is nowhere dense in $\mathcal C$, 
it is not hard to see that there is a sequence $v^k\in \mathcal C\setminus \mathcal C_{sing}$ such that 
$$v^k\to v,\ \displaystyle\frac{v^k-v}{\|v^k-v\|}\to u\in\widetilde\ell\ \text{ and }\ T_{v^k}\mathcal C\to \mathcal P\ \text{as}\ k\to +\infty.$$ 
In light of Lemma~\ref{ContainRay}, $\widetilde\ell\subset \mathcal P$. 
Since $\widetilde\ell\not\subset Q$, obviously $\mathcal P\not\subset Q$. 
Denote by $\ell_k$ the ray in $\mathcal C$ through $v^k$. 
It is clear that 
$$\ell_k\subset \mathcal C\setminus \mathcal C_{sing} \text{ for any } k \ \text{ and }\ \ell_k\to\ell:=\R_+v\ \text{as}\ k\to +\infty.$$
By Corollary~\ref{TP}, there exists $P\in\mathcal N_\ell$ such that $\mathcal P\subset P.$ As $\mathcal P\not\subset Q$, it follows that $P\ne Q$ and so $\# (\mathcal N_\ell)>1$.

\subsubsection*{Case $2$: $C_v\mathcal C\subset Q$.} 
By the assumption, we have $C_v\mathcal C\ne  Q.$
We will show that there is a tangent limit $P$ of $X$ along $\ell$ such that $\displaystyle\angle(P,Q)=\frac{\pi}{2}$, which yields the theorem. 
By the definition, there is a sequence $z^k\in X\setminus X_{sing}$ and a sequence $t_k\in(0,+\infty)$ such that 
$$\lim_{k\to +\infty}z^k=0,\ \lim_{k\to +\infty}t_kz^k=v\ \text{ and }\ Q=\lim_{k\to +\infty}T_{z^k}X.$$ 
Since $X$ is of pure dimension $d$ at $0$ and since $t_k\to +\infty$ as $k\to +\infty$, there is $\delta\in(0,1)$ such that $X\cap \mathring{\mathbb B}^n_{\frac{\delta}{t_k}}(\frac{v}{t_k})$ is of pure dimension $d$ for $k$ large enough. For such $k$, set
$$X_k:={t_k} X=\left\{{t_k}x :\ x\in X\right\},\ \ Y_k:=\overline{X_k\cap \mathring{\mathbb B}^n_{\delta}(v)}\ \text{ and } \ Y:=\lim_{k\to +\infty}Y_k.$$

We are going to apply Lemma~\ref{perpendicular}, so we need to verify the conditions required by this lemma.
Set $y^k=t_k x^k$. It is clear that $Y_k$ and $Y$ are closed, $v\in Y$, $y^k\in Y_k\setminus (Y_k)_{sing}$ and $\displaystyle Q=\lim_{k\to +\infty}T_{y^k}Y_k$.

We will show that $\displaystyle v\not\in \lim_{k\to +\infty}(Y_k)_{sing}.$ 
By contradiction, suppose that $\displaystyle v\in \lim_{k\to +\infty}(Y_k)_{sing}.$
Then there is a sequence $w^k\in (Y_k)_{sing}$ tending to $v$. 
Clearly $w^k\in \mathring{\mathbb B}^n_{\delta}(v)$ for $k$ large enough.
This and the condition $w^k\in (Y_k)_{sing}$ implies that $w^k\in (\overline X_k)_{sing}$, i.e., $\displaystyle u^k:=\frac{w^k}{t_k}\in (\overline X)_{sing}$.
As $t_k\to +\infty,$ we have $u^k\to 0$ as $k\to +\infty$. 
Moreover, it is clear that $t_k u^k\to v$. 
Therefore $v\in \mathcal C'$. 
This contradiction implies that $\displaystyle v\not\in \lim_{k\to +\infty}(Y_k)_{sing}.$

Next, for $k$ large enough, we must have that $X\cap \mathring{\mathbb B}^n_{\frac{\delta}{t_k}}(\frac{v}{t_k})$ is of pure dimension $d$. 
Therefore $X_k\cap \mathring{\mathbb B}^n_{\delta}(v)$ is also of pure dimension $d$ as it is the image of $X\cap \mathring{\mathbb B}^n_{\frac{\delta}{t_k}}(\frac{v}{t_k})$ by the linear isomorphism 
$$\mathbb R^n\to\mathbb R^n,\ x\mapsto t_k x.$$ 
Consequently $Y_k=\overline{X_k\cap \mathring{\mathbb B}^n_{\delta}(v)}$ is of pure dimension $d$. 

In order to apply Lemma~\ref{perpendicular}, it remains to prove that $C_v Y\subsetneq Q.$ 
For this, it is sufficient to show that $C_v\mathcal C=C_v Y$.
Let $u\in \mathcal C\cap \mathring{\mathbb B}^n_{\delta}(v).$ Clearly $u\ne 0$ by the choice of $\delta$. 
In view of Lemma~\ref{ContainRay}, there is a $C^1$ definable curve $\gamma:(0,\varepsilon)\to X\setminus X_{sing}$ such that 
$$\|\gamma(r)\|=r\ \text{for}\ r\in(0,\varepsilon)\ \text{ and }\ \lim_{r\to 0^+}\frac{\gamma(r)}{r}= u.$$
Thus it is clear that, for $k>0$ large enough, $t_k\gamma(\frac{1}{t_k})\in \mathring{\mathbb B}^n_{\delta}(v)$ \text{and so} $t_k\gamma(\frac{1}{t_k})\in Y_k.$
Consequently $u\in Y$ and we get $\mathcal C\cap \mathring{\mathbb B}^n_{\delta}(v)\subset Y.$ On the other hand, it is easy to see that $Y\subset\mathcal C\cap{\mathbb B}^n_{\delta}(v).$ Therefore $\mathcal C\cap \mathring{\mathbb B}^n_{\delta}(v)=Y\cap \mathring{\mathbb B}^n_{\delta}(v)$, which implies that $C_v\mathcal C=C_v Y.$

Now in light of Lemma~\ref{perpendicular}, there is a subsequence $k_l$ of $\{1,2,\dots\}$ and a sequence $x^{l}\in Y_{k_l}\setminus(Y_{k_l})_{sing}$ tending to $v$ as $l\to +\infty$ such that the limit $\displaystyle P:=\lim_{l\to +\infty}T_{x^{l}}Y_{k_l}$ exists and $\displaystyle\angle(P,Q)=\frac{\pi}{2}.$
Let $\displaystyle\widetilde x^l:=\frac{x^l}{t_{k_l}}\in \overline X.$
Clearly $\widetilde x^l\to 0$, $t_{k_l}\widetilde x^l=x^l\to v$ as $l\to+\infty$ and $\widetilde x^l$ is a non singular point of $\overline X.$
In addition, since $T_{\widetilde x^{l}}\overline X$ and $T_{x^{l}}Y_{k_l}$ determine the same plane in the Grassmannian $\mathbb G(d,n)$, we get $\displaystyle\lim_{l\to +\infty} T_{\widetilde x^{l}}\overline X=P,$ i.e., $P$ belongs to the Nash fiber of $\overline X$ along $\ell$. In view of Lemma~\ref{NotEmpty}, we also have $P\in\mathcal N_\ell$. The theorem follows.
\end{proof}

The following corollary follows immediately from the proof of Theorem~\ref{Singular}.
\begin{corollary} Let $X\subset\mathbb R^n$ be a definable set of pure dimension $d$ at $0$. If $\dim \mathcal C<d$, then $\mathcal E=\mathcal C\setminus \mathcal C'$.
\end{corollary}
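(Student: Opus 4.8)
The plan is to read the corollary off Theorem~\ref{Singular} together with an elementary dimension bound, exactly as the sentence preceding the statement suggests. Concretely, I would establish the two inclusions $\mathcal E\subseteq\mathcal C\setminus\mathcal C'$ and $\mathcal C\setminus\mathcal C'\subseteq\mathcal E$ separately.

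The inclusion $\mathcal E\subseteq\mathcal C\setminus\mathcal C'$ requires no work at all: by the very definition~\eqref{E}, every $v\in\mathcal E$ satisfies $\ell:=\mathbb R_+v\subset\mathcal C\setminus\mathcal C'$, and in particular $v\in\mathcal C\setminus\mathcal C'$. For the reverse inclusion, I would fix a non zero $v\in\mathcal C\setminus\mathcal C'$ and observe that the hypothesis $\dim\mathcal C<d$ forces $C_v\mathcal C$ to fail to be a plane of dimension $d$. Indeed, applying Lemma~\ref{Lemma32} to the translated definable set $\{-v\}+\mathcal C$ at the origin, whose tangent cone at $0$ is $C_v\mathcal C$ and whose dimension at $0$ equals $\dim_v\mathcal C$, one obtains
$$\dim C_v\mathcal C\leqslant\dim_v\mathcal C\leqslant\dim\mathcal C<d,$$
so that $C_v\mathcal C$ is a cone of dimension strictly smaller than $d$ and, a fortiori, is not a $d$-dimensional plane. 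Theorem~\ref{Singular} then applies verbatim and yields $v\in\mathcal E$. Since $v$ was an arbitrary non zero point of $\mathcal C\setminus\mathcal C'$, this gives $\mathcal C\setminus\mathcal C'\subseteq\mathcal E$; combining the two inclusions yields $\mathcal E=\mathcal C\setminus\mathcal C'$.

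I expect essentially no obstacle here, since all the substance is carried by Theorem~\ref{Singular}, which is already proved; the corollary is a harvesting of it. Two minor points deserve a line of care. First, the dimension bound of Lemma~\ref{Lemma32}, stated at the origin, must be transported to an arbitrary point $v\in\mathcal C$; this is handled by the translation above (or one may simply invoke the well known fact $\dim C_v\mathcal C\leqslant\dim_v\mathcal C$). Second, one should note that the origin is automatically excluded from $\mathcal C\setminus\mathcal C'$ in this setting: if $0$ were a regular point of $\overline X$, then $\overline X$ would be a $C^1$ manifold of dimension $d$ near $0$ and hence $\mathcal C=T_0\overline X$ would have dimension $d$, contradicting $\dim\mathcal C<d$; therefore $0\in(\overline X)_{sing}$ and $0\in\mathcal C'$, so that the equality $\mathcal E=\mathcal C\setminus\mathcal C'$ is consistent with the convention $\mathcal E\subseteq\mathbb R^n\setminus\{0\}$.
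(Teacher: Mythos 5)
Your proof is correct and follows essentially the same route as the paper, which simply asserts that the corollary ``follows immediately from the proof of Theorem~\ref{Singular}.'' Your derivation makes the deduction explicit: the dimension bound $\dim C_v\mathcal C\leqslant\dim_v\mathcal C\leqslant\dim\mathcal C<d$ from Lemma~\ref{Lemma32} applied to the translated cone shows $C_v\mathcal C$ cannot be a $d$-plane, so Theorem~\ref{Singular} applies to every non zero $v\in\mathcal C\setminus\mathcal C'$; the reverse inclusion is definitional, and your remark that $0\in\mathcal C'$ under the hypothesis $\dim\mathcal C<d$ is a correct and welcome consistency check.
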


\section{Remarks and examples}\label{Remarks}

In this section we give some remarks and examples concerning the results presented in the paper.

\begin{remark}{\rm 
\begin{enumerate}[{\rm (i)}]
\item It is possible that $\dim \mathcal E=\dim\mathcal C$.
In addition, under the assumptions of Theorem \ref{Singular}, it does not necessarily hold that $v\in \mathcal E'$. These will be seen in Example~\ref{E=C}.
\item Theorem \ref{Connected} does not necessarily hold if we replace $\mathcal E$ by $\mathcal E'$. In fact, for a ray $\ell$ in $\mathcal C'$, the Nash fiber $\mathcal N_\ell$ along $\ell$ is not necessary connected as shown in Example~\ref{2E'}. 
\item If $\dim \mathcal C=\dim X$, a ray $\ell\subset \mathcal C_{sing}$ does not necessary belong to $\mathcal E$. This is illustrated in Example~\ref{E1}.
\item If $X$ is not closed, it is worth noting that we need to remove from $\mathcal E$ the rays in $\mathcal C'=C_0(\overline X)_{sing}$, not only the rays in $C_0 X_{sing}$. Precisely, Theorem~\ref{Connected} may not hold if we set 
$$\mathcal E=\{\ell\subset \mathcal C\setminus C_0 X_{sing}:\ \#(\mathcal N_\ell) > 1\}.$$ 
An illustration is given in Example~\ref{NotSBX}.
\item Theorem~\ref{Connected} does not hold, in general, for definable sets of codimension greater than $1$ as shown in Example~\ref{Codim2}.
\end{enumerate}
}\end{remark}

\begin{example}{\rm Consider the Whitney umbrella 
$$X:=\{(x,y,x)\in\R^3:\ x^2-y^2z=0\}.$$ 
It is not hard to see that 
$$X_{sing}=\{x=y=0\}\ \text{ and }\ \mathcal C=\{x=0,z\geqslant 0\}.$$ 
We will show that the rays $\R_+ (0,1,0)$ and $\R_+ (0,-1,0)$ belong to $\mathcal E$ by computing the Nash fibers along these rays (in fact, this is straightforward in view of Theorem~O'Shea--Wilson or Theorem~\ref{Singular}); moreover, 
\begin{equation}\label{51}\begin{array}{lll}
\mathcal N_{\R_+ (0,1,0)}&=&\mathcal N_{\R_+ (0,-1,0)}\\
&=&\{P\in\mathbb G(2,3):\ P \text{ contains the axis } Oy \}\\
&=&
\left\{\begin{array}{lll}
P\in\mathbb G(2,3):\ P=
\left\{\begin{array}{lll}(w_1,w_2,w_3): & aw_1+bw_3=0,\\ 
&a^2+b^2\ne 0,\ b\geqslant 0
\end{array}\right\}
\end{array}\right\}.
\end{array}
\end{equation}
Assume that $\ell=\R_+ (0,1,0)$. The case $\ell=\R_+ (0,-1,0)$ is similar.
Set $$f:=x^2-y^2z.$$ 
For $(x,y,z)\in X$, we have 
$$\nabla f(x,y,z)=(2x,-2yz,-y^2).$$
Set $$A(x,y,z):=\frac{\nabla f(x,y,z)}{\|\nabla f(x,y,z)\|}.$$
Let $\gamma\colon(0,\epsilon)\to X\setminus X_{sing}$ be an analytic curve such that $\gamma(t)\to 0$ and $\displaystyle\frac{\gamma(t)}{\|\gamma(t)\|}\to (0,1,0)$ as $t\to 0$. 

If the curve $\gamma$ lies in the axis $Oy$, then $A(\gamma(t))=(0,0,-1)$. 
Thus 
\begin{equation}\label{51-1}\mathcal N_\ell\ni(0,0,-1)^\perp=\{w_1,w_2,w_3)\in\R^3: w_3=0\}.\end{equation}
Now assume that $\gamma$ does not intersects the axis $Oy$.
Write $$\gamma(t)=(x(t),y(t),z(t))=(x_0t^\alpha+\cdots,y_0t^\beta+\cdots,z_0t^\gamma+\cdots).$$ 
Clearly $y_0> 0$ and $\beta>0$. 
Moreover, as $\gamma$ does not intersect the axis $Oy$, it follows that $z(t)> 0$ for all $t$. 
Hence $z_0>0$ and $\gamma>0$. 
Consequently $x_0\ne 0$ and $\alpha>0.$ 
As $$z(t)=\frac{x^2(t)}{y^2(t)}=\frac{x_0^2}{y_0^2}t^{2\alpha-2\beta}+\dots,$$ we get
$$\gamma(t)=\left(x_0t^\alpha,y_0t^\beta,\frac{x_0^2}{y_0^2}t^{2\alpha-2\beta}\right)+\cdots\ \text{and}$$ 
$$\nabla(\gamma(t))=\left(2x_0t^\alpha,-2\frac{x_0^2}{y_0}t^{2\alpha-\beta},-{y_0^2}t^{2\beta}\right)+\cdots$$ 
Observe that $\gamma=2\alpha-2\beta$, so $\alpha>\beta.$
Assume that $\alpha=2\beta$, then it is not hard to verify that 
$$\lim_{t\to 0}\frac{\gamma(t)}{\|\gamma(t)\|}=(0,1,0)\ \text{ and }\ \lim_{t\to 0}A(\gamma(t))=\frac{(2x_0,0,-y_0^2)}{\sqrt{4x_0^2+y_0^4}}.$$
For $a\ne 0$ and $b>0$, set $x_0=-\frac{a}{2}$ and $y_0=\sqrt{b}$.
Then 
\begin{equation}\label{51-2}
\begin{array}{lll}\displaystyle\mathcal N_\ell\ni \left(\frac{(2x_0,0,-y_0^2)}{\sqrt{4x_0^2+y_0^4}}\right)^\perp&=&(2x_0,0,-y_0^2)^\perp\\
&=&(-a,0,-b)^\perp=(a,0,b)^\perp\\
&=&\{(w_1,w_2,w_3): aw_1+bw_3=0\}.
\end{array}
\end{equation}
Now suppose that $\frac{3\beta}{2}<\alpha<2\beta.$ By simple computations, we have 
$$\lim_{t\to 0}\frac{\gamma(t)}{\|\gamma(t)\|}=(0,1,0)\ \text{ and }\ \lim_{t\to 0}A(\gamma(t))=(1,0,0),$$
i.e., $\mathcal N_\ell\ni(1,0,0)^\perp=\{(w_1,w_2,w_3):\ w_1=0\}.$
Combining this with~\eqref{51-1} and~\eqref{51-2} yields~\eqref{51}.

For any ray $\ell$ different from $\R_+ (0,1,0)$ and $\R_+ (0,-1,0)$, it can be verified that $\mathcal N_\ell$ contains only one element given by $\{(w_1,w_2,w_3):\ w_1=0\}.$ So $\ell\not\in \mathcal E.$
}\end{example}

\begin{example}\label{E=C}{\rm  Let $X:=\{(x,y,x)\in\R^3:\ x^2+y^2=z^3\}$. Then $\mathcal C=\{x=y=0,\ z\geqslant 0\}.$ Let $\ell:=\R_+(0,0,1)$. We have $\mathcal C=\ell\cup\{0\}$ and $\mathcal N_\ell=\{P\in \mathbb G(2,3):\ \ell\subset P\}$. Consequently $\dim\mathcal E=\dim\mathcal C$. }
\end{example}

\begin{example}\label{2E'}{\rm Let $X:=X_1\cup X_2$, where
$$X_1:=\{(x,y,x)\in\R^3:\ z^3\geqslant x^2,\ y=0\},$$
$$X_2:=\{(x,y,x)\in\R^3:\ x=0\}.$$
It is not hard to check that $\mathcal C=\{x=0\}$ and $\mathcal C'=\{x=y=0\}$. 
So $\mathcal C_{sing}=\emptyset.$ 
Consider the ray $\ell=\R_+(0,0,1)\subset \mathcal C'$. Clearly $\mathcal N_\ell$ is disconnected since it contains two elements which are determined respectively by $\{(w_1,w_2,w_3):\ w_1=0\}$ and $\{(w_1,w_2,w_3):\ w_2=0\}$.
}\end{example}

\begin{example}\label{E1}{\rm Let $X:=X_1\cup X_2$, where
$$X_1:=\{(x,y,x)\in\R^3:\ f_1(x,y,z):=x^2+(y-z)^2+z^4-z^2=0, z\geqslant 0\},$$ 
$$X_2:=\{(x,y,x)\in\R^3:\ f_2(x,y,z):=x^2+(y+z)^2+z^4-z^2=0, z\geqslant 0\}.$$
Clearly,
$$\mathcal C=\{(x^2+(y-z)^2-z^2)(x^2+(y+z)^2-z^2)=0,z\geqslant 0\},$$
which is the union of two cones tangent to each other along the ray $\ell=\R_+(0,0,1)$. Therefore, $\ell\subset \mathcal C_{sing}$. 
On the other hand, let $a^k:=(x_k,y_k,z_k)\in X\setminus\{(0,0,0)\}$ be any sequence such that $a^k\to 0$ and $\displaystyle\frac{a^k}{\|a^k\|}\to(0,0,1)$, so $\displaystyle\frac{x_k}{z_k}\to 0$ and $\displaystyle\frac{y_k}{z_k}\to 0.$ 
Without loss of generality, suppose that $a^k\in X_1$ for all $k.$ We have 
$$\nabla f_1(a^k)=(2x_k,2y_k-2z_k,-2y_k+4z_k^3)=2z_k\left(\frac{x_k}{z_k},\frac{y_k}{z_k}-1,-\frac{y_k}{z_k}+2z_k^2\right).$$
Consequently, $\frac{\nabla f_1(a^k)}{\|\nabla f_1(a^k)\|}\to (0,-1,0)$, which implies that $T_{a^k}X$ tends to the plane $Oxz.$ Hence $\mathcal N_\ell$ contains only one element which is the plane $Oxy$, so $\ell\not\subset \mathcal E$.}
\end{example}

\begin{example}\label{NotSBX}{\rm Set 
$$X:=\{(x,y,z)\in\R^3:\ x=0,\ y\ne 0\}\cup\{(x,y,z)\in\R^3:\ y=0,\ x\ne 0\}.$$
Note that $X_{sing}=\emptyset$, so $C_0 X_{sing}=\emptyset$. Obviously,
$$\mathcal C=\{x=0\}\cup\{y=0\},$$
Moreover, 
$$\ell_1:=\R_+(0,0,1)\subset \mathcal C\setminus C_0 X_{sing},\ \ell_2:=\R_+(0,0,-1)\subset \mathcal C\setminus C_0 X_{sing}$$
and 
$$\mathcal N_{\ell_1}=\mathcal N_{\ell_2}=\{P_1,P_2\},$$ 
where 
$$P_1=\{(w_1,w_2,w_3):\ w_1=0\}\ \text{ and }\ P_2=\{(w_1,w_2,w_3):\ w_2=0\}.$$ 
So $\mathcal N_{\ell_1}$ and $\mathcal N_{\ell_2}$ are disconnected.
}\end{example}

\begin{example} \label{Codim2}{\rm
Let $X:=X_1\cup X_2,$ where
$$\begin{array}{lll}
X_1&:=&\{(x,y,z,t)\in\mathbb R^4:\ x=y=0\}\\
X_2&:=&\{(x,y,z,t)\in\mathbb R^4:\ z=0,\ x^2+y^2=t^3\}.
\end{array}$$
It is clear that $X\setminus\{0\}$ is not singular, $\dim X=2$, and that
$$C_0(X_1)=X_1\ \ \text{and}\ \ C_0(X_2)=\{x=y=z=0\}.$$
So $C_0(X)=\{x=y=0\}.$ Let 
$$\ell:=\R_+(0,0,0,1)\subset C_0(X_1)\cap C_0(X_2)\subset \mathcal C.$$ 
Denote by $\mathcal N_\ell(X_i)$ the set of tangent limits of $X_i\ (i=1,2)$ along $\ell$. It is clear that $\mathcal N_\ell(X_1)$ has only one element given by 
$$P=\{(w_1,w_2,w_3,w_4):\ w_1=w_2=0\}.$$ 
On the other hand, for any $Q\in \mathcal N_\ell(X_2),$ its equation is given by 
$$Q=\{(w_1,w_2,w_3,w_4):\ aw_1+bw_2=0,w_3=0\}$$ 
with $a^2+b^2\ne 0.$ So we have $\displaystyle\angle(P,Q)=\frac{\pi}{2}.$ Consequently, the set of tangent limits of $X$ along $\ell$, given by $\mathcal N_\ell(X)=\mathcal N_\ell(X_1)\cup \mathcal N_\ell(X_2)$, is disconnected. Obviously, $\ell$ is a ray in $\mathcal E$ and $\dim \mathcal N_\ell=1$.

Now let $$X_3:=\{(x,y,z,t)\in\mathbb R^4:\ z=0,\ x^2=t^3\}$$ 
and let $X':=X_1\cup X_3.$ 
Then $(X')_{sing}=(X_3)_{sing}=\{x=z=t=0\}$, $\dim X'=2$ and we have 
$$C_0(X')=\{x=y=0\}\cup\{x=z=0,t\geqslant 0\}.$$ Let $\ell:=\R_+(0,0,0,1)\subset (C_0(X'))_{sing}.$ Clearly, $\ell$ is not tangent to $(X')_{sing}$ at $0$. Note that $\mathcal N_\ell(X_3)$ contains only one element given by 
$$R=\{(w_1,w_2,w_3,w_4):\ w_1=w_3=0\},$$ 
so $R\ne P$. Therefore $\#(\mathcal N_\ell)=2$ and $\ell$ is a ray in $\mathcal E$.
}

\end{example}
\noindent\textbf{Question.} If $X$ is a definable set of pure dimension $d>0$ at the origin $0\in\mathbb R^n$ and $\ell\subset \mathcal E\setminus (\mathcal C'\cup \mathcal C_{sing}),$ then $\dim \mathcal N_\ell\geqslant 1$?

\noindent\textbf{Acknowledgment.} A great part of this work was performed while the first author visited the laboratory LAMA $-$ Universit\'e Savoie Mont Blanc $-$ CNRS research unit number 5127 by benefiting a ``poste rouge'' of the CNRS. The first author would like to thank the laboratory, INSMI and LIA Formath Vietnam (CNRS) for hospitality and support. This research was also partially performed while the first and third authors visited  Vietnam Institute for Advanced Study in Mathematics (VIASM). The first and third authors would like to thank the Institute for hospitality and support. We also would like to thank Krzysztof Kurdyka and Vincent Grandjean for many helpful discussions during the preparation of the paper.


\end{document}